\tikzstyle{vertex}=[circle, draw, inner sep=0pt, minimum size=8pt]
\newcommand{\vertex}{\node[vertex]}
\newtheorem{thm}{Theorem}[section]
\newtheorem{cor}[thm]{Corollary}
\newtheorem{lem}[thm]{Lemma}
\newtheorem{prop}[thm]{Proposition}
\theoremstyle{definition}
\newtheorem{exe}[thm]{Example}
\newtheorem{rem}[thm]{Remark}
\newtheorem{defn}[thm]{Definition}
\newtheorem{comp}[thm]{Compilation of spectral measures}
\newcommand{\N}{\mathbf N}
\newcommand{\Z}{\mathbf Z}
\newcommand{\R}{\mathbf R}
\newcommand{\Q}{\mathbf Q}
\newcommand{\C}{\mathbf C}
\newcommand{\K}{\mathbf K}
\newcommand{\Hi}{\mathcal H}
\newcommand{\Ki}{\mathcal K}
\newcommand{\sca}{\hskip-.05cm\mid\hskip-.05cm}
\title[Spectral measures and dominant vertices in graphs]
{Spectral measures and dominant vertices
\\
in graphs of bounded degree}
\subjclass[2000]{05C50, 47A10.}
\keywords{Graph, adjacency operator, spectral measure,
dominant vector, dominant vertex}
\thanks{
The authors acknowledge support of the Swiss NSF grants 200020-178828 and 200020-20040.
}
\date{10 August 2023}
\author{Claire Bruchez, Pierre de la Harpe, and Tatiana Nagnibeda}
\address{Claire Bruchez, Pierre de la Harpe, and Tatiana Nagnibeda
\newline
Section de math\'ematiques,
Universit\'e de Gen\`eve,
Uni Dufour,
\newline
24 rue du G\'en\'eral Dufour,
Case postale 64,
CH--1211 Gen\`eve 4.
}
\email{Claire.Bruchez@etu.unige.ch
\hskip.5cm Pierre.delaHarpe@unige.ch
\newline Tatiana.Smirnova-Nagnibeda@unige.ch}
\begin{document}

\begin{abstract}
A graph $G = (V, E)$ of bounded degree
has an adjacency operator~$A$
which acts on the Hilbert space $\ell^2(V)$.
There are different kinds of measures of interest
on the spectrum $\Sigma (A)$ of $A$.
In particular, each vector $\xi \in \ell^2(V)$ defines
a local spectral measure $\mu_\xi$ at $\xi$ on $\Sigma (A)$;
therefore each vertex $v \in V$ defines
a vector $\delta_v \in \ell^2(V)$
and the associated measure $\mu_v$ on~$\Sigma (A)$.
A vertex $v$ is dominant if, for all $w \in V$, the measure $\mu_w$
is absolutely continuous with respect to $\mu_v$
(it then follows that, for all $\xi \in \ell^2(V)$, the measure $\mu_\xi$
is absolutely continuous with respect to $\mu_v$).
The main object of this paper is to show that all possibilities occur:
in some graphs, for example in vertex-transitive graphs,
all vertices are dominant;
in other graphs, only some vertices are dominant;
and there are graphs without dominant vertices at all.
\end{abstract}

\maketitle

\section{Introduction}
\label{SectionIntro}

A \textbf{graph} $G$ with set of vertices $V$ is defined
combinatorially by its set of edges $E$,
which is a subset of the set of unordered pairs of vertices,
and analytically by its \textbf{adjacency operator} $A$,
which acts on the Hilbert space $\ell^2(V)$.
We consider only countable graphs with bounded degree, i.e.,
graphs such that $V$ is countable (finite or infinite)
and such that the number
of edges incident to a vertex $v$
is bounded by some constant independent of $v$.
In this case the Hilbert space $\ell^2(V)$ is separable
and the operator $A$ defined by
$$
(A \xi)(v) = \sum_{\substack{w \in V \text{ such that } \\ \{v,w\} \in E}} \xi (w)
\hskip.5cm \text{for all} \hskip.2cm
\xi \in \ell^2(V)
\hskip.2cm \text{and} \hskip.2cm
v \in V 
$$
is bounded and self-adjoint. 
Considerable attention has been given to the relations
between the properties of $G$ relevant to graph theory and random walks,
and the spectral properties of $A$ such as its spectrum
or various natural measures on it;
see for example \cite{Kest--59}, \cite{Moha--82}, \cite{GoMo--88}, \cite{MoWo--89}
for infinite graphs
and \cite{CoSi--57}, \cite{CvDS--80}, \cite{Bigg--93}, \cite{GoRo--01}, \cite{BrHa--12}
for finite graphs.
\par

Our motivation for this work is to emphasize
that there are \textbf{several kinds of spectral measures}
on the spectrum of an adjacency operator,
and that they should not be confused with each other
(see Compilation~\ref{compilation}).
In the seminal paper by Kesten \cite{Kest--59},
the central role is played by the measure $\mu_v$
on the spectrum of $A$ defined as follows:
choose a vertex $v \in V$,
and denote by $\delta_v \in \ell^2(V)$ its characteristic function;
denote by $A$ the adjacency operator of the graph,
by $\Sigma (A)$ its spectrum,
by $\mathcal C (\Sigma(A))$ the space of all continuous functions $\Sigma (A) \to \C$;
define the \textbf{vertex spectral measure of $A$ at $v$} by
$$
\mu_v \, \colon \mathcal C (\Sigma(A)) \to \C ,
\hskip.5cm
f \mapsto \langle f(A) \delta_v \sca \delta_v \rangle ,
$$
where $f(A)$ is defined by operator calculus.
Two important and well-known properties of $\mu_v$
are that, for any non-negative integer $n$,
the moment of order $n$ of $\mu_v$ counts closed walks in the graph $G = (V, E)$,
$$
\int_{\Sigma (A)} t^n d\mu_v (t) = \langle A^n \delta_v \sca \delta_v \rangle =
\text{number of closed walks in $G$ of length $n$ at $v$},
$$
and that $\max \{ t \in \text{closed support of} \hskip.2cm \mu_v \} = \Vert A \Vert$
(see Proposition~\ref{countloops}).
The graphs of main interest in Kesten's paper
are vertex-transitive (i.e., for any $v, w \in V$,
there exists an automorphism $\alpha$ of $G$ such that $\alpha (v) = w$);
in this case, $\mu_v$ does not depend on the choice of $v$,
and this makes it clear that $\mu_v$ is the appropriate measure to consider.
However, in general, $\mu_v$ does depend on $v$;
indeed, for another vertex $w$,
the measure $\mu_w$ need not be absolutely continuous with respect to~$\delta_v$.
To our knowledge, this fact has not been properly addressed in the literature,
and this was our initial motivation.
\par

We focus attention on certain particular vertices,
that we call \textbf{dominant vertices}:
a vertex $v \in V$ is dominant if, for any other vertex $w \in V$,
the measure $\mu_w$ is absolutely continuous with respect to $\mu_v$
(Definition~\ref{defdomv})\footnote{The
notion of a \emph{dominant vertex} has nothing to do with
that of a \emph{dominating set} of vertices, which is a set of vertices such that
any vertex in the graph is at distance at most $1$ of the set.}.
\par

For example, all vertices are dominant in vertex-transitive-graphs
(Proposition~\ref{transimpliesdom})
and more generally in walk-regular graphs
(Proposition~\ref{wrg}).
When $G$ is a finite graph, a vertex $v$ is dominant
if the measure defined by $v$ on the spectrum of $A$
charges all eigenvalues of $A$,
equivalently if the orthogonal projection of $\delta_v$
on every eigenspace of $A$ is not zero
(see Proposition~\ref{domvertfg},
for this and other characterizations of dominant vertices in finite graphs).
The main point of what follows is to provide examples of graphs, both infinite and finite,
which show the various possibilities: all vertices may be dominant,
or only some of them, 
or just one of them,
or none at all.
\par

It has been conjectured that ``most finite graphs''
have characteristic polynomials irreducible over $\Q$
(\cite{ORWo--19} and \cite[Conjecture 7.3]{LiSi--22});
if true, this would imply that, in ``most finite graphs'',
all vertices are dominant (see Proposition~\ref{PropIrrQ}).
There are several examples of connected finite graphs without dominant vertices
in Section~\ref{SectionFExamples},
and of a connected infinite graph with only two dominant vertices
in Example~\ref{Only2dom}.
We haven't been able to find a bounded degree connected infinite graph
without dominant vertices; for this open question, the Pascal graph of \cite{Quin--09}
could be an example to study.

\vskip.2cm

Section~\ref{SectionDomVectors} is a reminder of spectral analysis
for bounded self-adjoint operators on Hilbert spaces,
with an emphasis on dominant vectors and cyclic vectors.
\par

In Section~\ref{SectionDomVertices}, we review
adjacency operators of graphs of bounded degree,
and we define vertex spectral measures and dominant vertices.
Proposition~\ref{transimpliesdom} shows that
all vertices are dominant in vertex-transitive graphs;
this applies to Cayley graphs, in particular to
the infinite line, to square grids, and to regular trees.
More generally, all vertices are dominant in walk-regular graphs
(Proposition~\ref{wrg}).
Later, it is shown that all vertices in the infinite ray (which has trivial automorphism group)
are dominant, indeed cyclic (Proposition~\ref{PropExRay});
the proof we know uses classical properties of Chebyshev polynomials.
By contrast, there is a minor modification of the infinite ray in which
almost all vertices are not dominant (Example~\ref{Only2dom}).
\par

The next two sections are for finite graphs.
In Section~\ref{SectionFG}, on linear algebra,
it is shown how vertex spectral measures and dominant vertices
can be characterized for finite graphs.
Then Section~\ref{SectionFExamples} shows examples
of connected finite graphs and their dominant vertices:
finite paths, complete bipartite graphs, finite graphs without dominant vertices at all
(including trees and regular connected graphs),
distance-regular graphs, and Coxeter graphs.
\par

Section~\ref{SectionFE} is a continuation of Section~\ref{SectionDomVertices},
with other examples of infinite graphs (infinite ray and stars)
in which all vertices (or possibly all but one) are dominant.
Section~\ref{S+EofJc} is a technical interlude
on a particular Jacobi matrix
(its spectrum, its eigenvalues, and its cyclic vectors)
which appears in the example of infinite stars (Theorem~\ref{exStars}).
\par

Similar questions on dominant vertices can, and should be asked
about other adjacency-type operators on graphs
(Laplacian, normalized adjacency, Mar\-kov, etc.).
In Section~\ref{sectionMarkov}, me address briefly
the \textbf{Markov operator} $M$ of a graph $G = (V, E)$,
defined as the matrix of transition probabilities of the simple random walk on the graph
(see for example \cite[Section 2]{MoWo--89}, where $M$ is called the transition operator).
A vertex $v \in V$ is \emph{$M$-dominant} if the vector $\delta_v$ is dominant for $M$.
When $G$ is regular of degree $k$, we have $M = \frac{1}{k}A$,
and a vertex is therefore $M$-dominant
if and only if it is $A$-dominant (= dominant).
However, for non-regular graphs,
the $A$-dominant vertices need not be the same as the $M$-dominant vertices;
it is easy to verify that it is so in finite paths, for example.
\par

For the reader interested essentially in finite graphs,
the important parts are Sections~\ref{SectionFG} and~\ref{SectionFExamples}.

\newpage

\section{Spectral measures, local spectral measures,
\\
and dominant vectors for a bounded self-adjoint operator}
\label{SectionDomVectors}

Let $\Hi$ be a Hilbert space (here always over the complex numbers, and not $\{0\}$)
and $X$ a bounded self-adjoint operator on $\Hi$.
The spectrum $\Sigma (X)$ is the set of complex numbers $\lambda$
such that $\lambda - X$ is not invertible; it is a nonempty compact subset of~$\mathbf R$,
contained in the interval $[ -\Vert X \Vert, \Vert X \Vert]$.
The \textbf{spectral measure} of $X$ is the projection-valued measure
$E_X \, \colon \mathfrak B_{\Sigma (X)} \to {\rm Proj} (\Hi)$ such that
\begin{equation}
\label{EqProjSpecMeas}
X = \int_{\Sigma (X)} t dE_X(t) ,
\end{equation}
where $\mathfrak B_{\Sigma (X)}$ is the $\sigma$-algebra of Borel sets on $\Sigma (X)$
and ${\rm Proj} (\Hi)$ is the set of orthogonal projections of $\Hi$ onto closed subspaces.
A \textbf{scalar-valued spectral measure} for $X$
is a finite positive measure $\mu \, \colon \mathfrak B_{\Sigma (X)} \to \mathbf R_+$
such that $\mu(B) = 0$ (the zero number)
if and only if $E_X(B) = 0$ (the zero projection on $\Hi$)
for any Borel subset $B$ of $\Sigma (X)$;
this terminology is that of \cite{Conw--07}.
The existence of a scalar-valued spectral measure for $X$
is guaranteed by Proposition~\ref{dominant};
it follows from the definition that two scalar-valued spectral measures for $X$
are equivalent to each other.
\par

For any bounded Borel complex-valued function on the spectrum,
$f \in \mathcal B (\Sigma (X))$,
functional calculus provides a bounded operator $f(X)$ on $\Hi$;
this holds in particular for any continuous function on the spectrum,
$f \in \mathcal C (\Sigma (X))$.
The local spectral measure of $X$ at two vectors $\xi$ and $\eta$ in $\Hi$
is the complex measure $\mu_{\xi, \eta}$ on $\Sigma (X)$ defined by
$$
\int_{\Sigma(X)} f(t) d\mu_{\xi, \eta} (t) = \langle f(X) \xi \sca \eta \rangle
\hskip.5cm \text{for all} \hskip.2cm
f \in \mathcal C (\Sigma (X)) 
$$
or equivalently by
$$
\mu_{\xi, \eta}(B) = \langle E_X(B) \xi \sca \eta \rangle
\hskip.5cm \text{for all Borel subset} \hskip.2cm
B \subset \Sigma (X) .
$$
In particular
the \textbf{local spectral measure} of $X$ at a vector $\xi \in \Hi$
is the positive measure on $\Sigma (X)$ denoted by $\mu_\xi$
(rather than $\mu_{\xi, \xi}$) defined by
\begin{equation}
\label{EqLocalSpecMeas}
\int_{\Sigma(X)} f(t) d\mu_\xi(t) = \langle f(X) \xi \sca \xi \rangle
\hskip.5cm \text{for all} \hskip.2cm
f \in \mathcal C (\Sigma (X)) .
\end{equation}
The norm of $\mu_\xi$ is $\mu_\xi(\Sigma(X)) = \Vert \xi \Vert^2$.
It follows from the definitions that $\mu_\xi$ is absolutely continuous
with respect to any scalar-valued spectral measure for~$X$.
Note that $\mu_\xi$ can equally be viewed as a positive measure on $\mathbf R$
with closed support contained in $\Sigma (X)$.
\par

A vector $\xi \in \Hi$ is \textbf{dominant} for $X$
if $\mu_\eta \prec \hskip-.1cm \prec \mu_\xi$
for all $\eta \in \Hi$,
in other words if the measure $\mu_\eta$ is absolutely continuous
with respect to $\mu_\xi$ for all $\eta \in \Hi$,
i.e., if the measure $\mu_\eta$ is dominated by the measure $\mu_\xi$
for all $\eta \in \Hi$.
\par

For examples of dominant vectors, see Proposition~\ref{c+m} below.
For an easy non-example, consider a self-adjoint operator $X$
which has an eigenvalue $\lambda$
and which is not $\lambda$ times the identity;
an eigenvector of eigenvalue $\lambda$ is not a dominant vector for $X$.
\par

Terminology varies from one author to the other.
A dominant vector for $X$ is 
\emph{maximal relative to $X$} in \cite[Chap.\ X, Section~5, Definition~6]{DuSc--63},
it is a \emph{vector of maximal type} in \cite{BoSm--20},
and a \emph{separating vector}
for the von Neumann algebra generated by $X$ in \cite{Dixm--69}.
For $\xi$ dominant, the measure $\mu_\xi$
is a scalar-valued spectral measure for $X$;
conversely, any scalar-valued spectral measure
is a measure $\mu_\xi$ for some dominant $\xi \in \Hi$
\cite[Theorem IX.8.9 and Lemma IX.8.6]{Conw--07}.

\begin{prop}[\textbf{existence and characterizations of dominant vectors
for self-adjoint operators}]
\label{dominant}
Let $X$ be a bounded self-adjoint operator on a separable Hilbert space $\Hi$.
Let $\mathcal B (\Sigma (X))$ and $E_X$ be as above.
\begin{enumerate}[label=(\arabic*)]
\item\label{1DEdominant}
There exist dominant vectors for $X$.
More precisely, for any $\eta \in \Hi$, there exists a dominant vector $\xi$ for $X$
such that $\eta$ is in the closed linear span of $\{ X^n \xi \}_{n \in \N}$.
\item\label{2DEdominant}
A vector $\xi \in \Hi$ is dominant for $X$ if and only if, for $f \in \mathcal B (\Sigma (X))$,
the equality $f(X) \xi = 0$ implies $f(X) = 0$.
\item\label{3DEdominant}
A vector $\xi \in \Hi$ is dominant for $X$ if and only if,
for any Borel subset $B$ of $\Sigma(X)$, we have
$\mu_\xi(B) = 0$ if and only if $E_X(B) = 0$,
i.e., if and only if $\mu_\xi$ is a scalar-valued spectral measure for $X$.
\end{enumerate}
\end{prop}

\begin{proof}[References for the proof]
For~\ref{1DEdominant}
and~\ref{2DEdominant},
see~\cite{Sim4--15}, Lemma 5.4.7 and Problem~3 of \S~5.4.
For~\ref{3DEdominant},
see~\cite{Conw--07}, Theorem IX.8.9.
\end{proof}

\begin{prop}
\label{dom=dom}
Let $\Hi$, $X$, and $E_X$ be as in Proposition~\ref{dominant}.
Let $\xi \in \Hi$ and let $(\varepsilon_i)_{i \ge 1}$ be an orthonormal basis of $\Hi$.
Denote by $\mu_\xi$ and $\mu_i$ the local spectral measures of $X$
at $\xi$ and $\varepsilon_i$.
\par
Then the vector $\xi$ is dominant for $X$ if and only if
the measure $\mu_\xi$ dominates~$\mu_i$ for all $i \ge 1$.
\end{prop}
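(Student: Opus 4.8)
The plan is to establish the two implications separately, relying on the characterization of dominant vectors given in part~\ref{4DEdominant} of Proposition~\ref{dominant}. The forward direction requires essentially nothing: if $\xi$ is dominant, then by definition $\mu_\eta \prec \hskip-.1cm \prec \mu_\xi$ for \emph{every} $\eta \in \Hi$, so in particular $\mu_i = \mu_{\varepsilon_i} \prec \hskip-.1cm \prec \mu_\xi$ for all $i \ge 1$, which is precisely the assertion that $\mu_\xi$ dominates each $\mu_i$.

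For the converse I would assume $\mu_i \prec \hskip-.1cm \prec \mu_\xi$ for all $i$ and aim to verify the condition in~\ref{4DEdominant}, namely that $\mu_\xi(B) = 0$ holds if and only if $E_X(B) = 0$, for every Borel subset $B$ of $\Sigma(X)$. One of these implications is automatic and holds for any vector: since $\mu_\xi(B) = \langle E_X(B)\xi \sca \xi\rangle$, the vanishing of $E_X(B)$ forces $\mu_\xi(B) = 0$. So the only thing left to prove is that $\mu_\xi(B) = 0$ implies $E_X(B) = 0$.

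The key step is as follows. Fix a Borel set $B$ with $\mu_\xi(B) = 0$. For each $i$, absolute continuity $\mu_i \prec \hskip-.1cm \prec \mu_\xi$ yields $\mu_i(B) = 0$. Now $\mu_i(B) = \langle E_X(B)\varepsilon_i \sca \varepsilon_i\rangle$, and because $E_X(B)$ is an orthogonal projection this quantity equals $\Vert E_X(B)\varepsilon_i \Vert^2$; hence $E_X(B)\varepsilon_i = 0$ for every $i$. Since $(\varepsilon_i)_{i \ge 1}$ is an orthonormal basis of $\Hi$, the operator $E_X(B)$ annihilates a total subset of $\Hi$ and is therefore the zero projection. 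Invoking~\ref{4DEdominant} once more, $\xi$ is dominant, which completes the converse.

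I do not expect a genuine obstacle here; the single point to get right is that for a projection the diagonal coefficient $\langle E_X(B)\varepsilon_i \sca \varepsilon_i\rangle$ coincides with $\Vert E_X(B)\varepsilon_i \Vert^2$, so that its vanishing really does kill $E_X(B)\varepsilon_i$. The temptation one should resist is to tackle a general $\eta$ directly through its expansion $\eta = \sum_i c_i \varepsilon_i$: the measure $\mu_\eta$ then involves the off-diagonal local measures $\mu_{\varepsilon_i,\varepsilon_j}$, whose behaviour under absolute continuity is awkward to control, and passing instead through the projection-valued characterization~\ref{4DEdominant} avoids these cross terms altogether.
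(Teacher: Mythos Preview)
Your proof is correct and follows essentially the same route as the paper: the forward implication is immediate from the definition, and for the converse you use that $\mu_\xi(B) = 0$ forces $\mu_i(B) = \langle E_X(B)\varepsilon_i \sca \varepsilon_i\rangle = \Vert E_X(B)\varepsilon_i\Vert^2 = 0$ for every $i$, hence $E_X(B) = 0$, and then conclude via Proposition~\ref{dominant}\ref{4DEdominant}. The paper's argument is the same, only stated more tersely.
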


\begin{proof}
If $\xi$ is dominant,
then $\mu_i \prec \hskip-.1cm \prec \mu_\xi$ for all $i \ge 1$,
by definition.
Conversely, suppose that $\mu_i \prec \hskip-.1cm \prec \mu_\xi$ for all $i \ge 1$.
Let $B$ be a Borel subset of $\Sigma (X)$.
If $\mu_\xi (B) = 0$,
then $\mu_i(B) = \langle E_X(B) \varepsilon_i \sca \varepsilon_i \rangle
= \Vert E_X(B) \varepsilon_i \Vert^2 = 0$
for all $i \ge 1$, namely $E_X(B) \varepsilon_i = 0$ for all $i \ge 1$,
hence $E_X(B) = 0$.
It follows that $\xi$ is dominant for~$X$
(see~\ref{3DEdominant} in Proposition~\ref{dominant}).
\end{proof}

Let $X$ be a bounded self-adjoint operator on a Hilbert space $\Hi$.
A vector $\xi \in \Hi$ is \textbf{cyclic} for $X$
if the closed linear span of $\{X^n \xi : n \in \N\}$
is the whole of~$\Hi$.
(Some authors choose another definition for more general operators,
but all definitions agree in case $X$ is self-adjoint;
see~\cite[Page 289, \S~5.1]{Sim4--15}.)
Note that, if $X$ has any cyclic vector at all,
then the space $\Hi$ is separable.
\par

The following proposition is well-known and straightforward:

\begin{prop}
\label{dcyclic}
For a self-adjoint operator $X$, any cyclic vector is dominant.
\end{prop}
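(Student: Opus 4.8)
The plan is to deduce this directly from the characterization of dominant vectors given in part~\ref{2DEdominant} of Proposition~\ref{dominant}: a vector $\xi$ is dominant for $X$ if and only if, for every $f \in \B(\Sigma(X))$, the equation $f(X)\xi = 0$ forces $f(X) = 0$. So I would fix some $f \in \B(\Sigma(X))$ with $f(X)\xi = 0$ and aim to show that $f(X)$ is the zero operator on all of $\Hi$.

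The key observation is that $f(X)$ commutes with every power $X^n$, since functional calculus is multiplicative and $X^n = g_n(X)$ for $g_n(t) = t^n$; equivalently, both $f(X)$ and $X^n$ lie in the commutative von Neumann algebra $W^*(X)$. Consequently, for each $n \in \N$,
\[
f(X) X^n \xi = X^n f(X) \xi = X^n \cdot 0 = 0 .
\]
Thus $f(X)$ annihilates every vector of the form $X^n \xi$, and hence, by linearity, the whole linear span of $\{X^n \xi : n \in \N\}$.

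Now I would invoke the cyclicity hypothesis: this linear span is dense in $\Hi$. Since $f(X)$ is a bounded (hence continuous) operator that vanishes on a dense subspace, it vanishes on all of $\Hi$, i.e., $f(X) = 0$. By the cited characterization, $\xi$ is therefore dominant for $X$.

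I do not anticipate any real obstacle, which is why the statement is labelled well-known and straightforward: it is essentially a repackaging of two standard facts — multiplicativity of functional calculus (so that $f(X)$ commutes with polynomials in $X$) and continuity of a bounded operator — together with Proposition~\ref{dominant}\ref{2DEdominant}. The only subtlety to keep in mind is that one must allow $f$ to range over all bounded Borel functions rather than merely continuous ones, which is exactly the generality in which part~\ref{2DEdominant} is stated; the commutation relation $f(X)X^n = X^n f(X)$ holds equally for such $f$, so nothing is lost.
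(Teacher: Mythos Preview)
Your proof is correct and is essentially identical to the paper's own argument: both use the characterization~\ref{2DEdominant} of Proposition~\ref{dominant}, observe that $f(X)X^n\xi = X^n f(X)\xi = 0$ for all $n$, and conclude by boundedness of $f(X)$ and density of the span of $\{X^n\xi\}$ that $f(X) = 0$.
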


\begin{proof}
Let $\xi$ be a cyclic vector for $X$.
Let $f$ in $\mathcal B (\Sigma (X))$ be a bounded Borel function on the spectrum of $X$.
If $f(X) \xi = 0$, then $f(X) X^n \xi = X^n f(X) \xi = 0$ for all $n \ge 0$,
hence $f(X) \eta = 0$ for all $\eta$ in the closed linear span of $\{X^n \xi : n \in \N\}$,
hence $f(X) = 0$.
(We have used the characterization of dominant vectors given in~\ref{2DEdominant}
of Proposition~\ref{dominant}).
\end{proof}

There are dominant vectors which are not cyclic,
for example when $X$ is a self-adjoint operator on a finite-dimensional space
which has multiple eigenvalues
(more generally when $X$ is a self-adjoint operator which is not multiplicity-free).

\vskip.2cm

We will use the following facts on dominant vectors for simple multiplication operators.
Let $\Sigma$ be a nonempty compact subset of the real line,
$\mathfrak B_\Sigma$ the Borel $\sigma$-algebra of $\Sigma$,
and $\mu$ a finite Borel measure on $(\Sigma, \mathfrak B_\Sigma)$,
of closed support $\Sigma$.
The \textbf{multiplication operator} $M_\mu$
is here the operator defined on $L^2(\Sigma, \mu)$ by
$$
(M_\mu \xi) (t) = t \xi (t) 
\hskip.5cm \text{for all} \hskip.2cm
\xi \in L^2(\Sigma, \mu)
\hskip.2cm \text{and} \hskip.2cm
t \in \Sigma .
$$
It is a bounded self-adjoint operator
with norm $\max_{t \in \Sigma} \vert t \vert$
and with spectrum~$\Sigma$.
The function $1_\Sigma \in L^2(\Sigma, \mu)$ of constant value $1$
is a cyclic vector for~$M_\mu$.
More is true:

\begin{prop}[\textbf{cyclic and dominant vectors for multiplication operators}]
\label{c+m}
Let $M_\mu$ be a multiplication operator as above and $\xi \in L^2(\Sigma, \mu)$.
The following three properties are equivalent:
\begin{enumerate}[label=(\roman*)]
\item\label{iDEc+m}
$\xi$ is cyclic for $M_\mu$,
\item\label{iiDEc+m}
$\xi$ is dominant for~$M_\mu$,
\item\label{iiiDEc+m}
$\xi$ is such that $\mu (\{ t \in \Sigma : \xi(t) = 0 \}) = 0$.
\end{enumerate}
\end{prop}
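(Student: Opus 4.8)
The plan is to establish the cycle of implications \ref{iDEc+m} $\Rightarrow$ \ref{iiDEc+m} $\Rightarrow$ \ref{iiiDEc+m} $\Rightarrow$ \ref{iDEc+m}. The first implication is free: it is exactly Proposition~\ref{dcyclic}, since a cyclic vector is always dominant. So the real work lies in the two remaining implications, and the whole argument rests on two elementary computations that are special to the multiplication operator $M_\mu$.

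First I would record the explicit form of the functional calculus for $M_\mu$. Since $M_\mu$ is multiplication by the identity function on $L^2(\Sigma, \mu)$, for every $f \in \B(\Sigma)$ the operator $f(M_\mu)$ is simply multiplication by $f$, that is $(f(M_\mu)\eta)(t) = f(t)\eta(t)$. In particular the spectral projection $E_{M_\mu}(B)$ is multiplication by the indicator $1_B$, so that $E_{M_\mu}(B) = 0$ if and only if $\mu(B) = 0$; thus $\mu$ is itself a scalar-valued spectral measure for $M_\mu$. From \eqref{EqLocalSpecMeas} one then computes, for $f \in \mathcal C(\Sigma)$, the identity $\int f \, d\mu_\xi = \langle f(M_\mu)\xi \sca \xi\rangle = \int f \vert\xi\vert^2 \, d\mu$, whence $d\mu_\xi = \vert\xi\vert^2 \, d\mu$, i.e. $\mu_\xi(B) = \int_B \vert\xi(t)\vert^2 \, d\mu(t)$ for every Borel set $B$.

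With these two facts, the equivalence \ref{iiDEc+m} $\Leftrightarrow$ \ref{iiiDEc+m} is a routine Radon--Nikodym argument. By characterization \ref{4DEdominant} of Proposition~\ref{dominant}, $\xi$ is dominant if and only if $\mu_\xi$ is a scalar-valued spectral measure, i.e. if and only if $\mu_\xi$ and $\mu$ are equivalent. The formula $d\mu_\xi = \vert\xi\vert^2 \, d\mu$ shows $\mu_\xi \prec \hskip-.1cm \prec \mu$ always, so equivalence reduces to $\mu \prec \hskip-.1cm \prec \mu_\xi$. Writing $Z = \{t \in \Sigma : \xi(t) = 0\}$ for a fixed Borel representative of $\xi$, one has $\mu_\xi(Z) = 0$, so $\mu \prec \hskip-.1cm \prec \mu_\xi$ forces $\mu(Z) = 0$; conversely, if $\mu(Z) = 0$ and $\mu_\xi(B) = 0$, then $\xi = 0$ holds $\mu$-a.e. on $B$, hence $B \subseteq Z$ up to a $\mu$-null set and $\mu(B) = 0$. (Alternatively, \ref{iiDEc+m} $\Rightarrow$ \ref{iiiDEc+m} follows in one line from characterization \ref{2DEdominant} applied to $f = 1_Z$: since $f(M_\mu)\xi = 1_Z \xi = 0$, dominance gives $f(M_\mu) = M_{1_Z} = 0$, i.e. $\mu(Z) = 0$.)

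It remains to prove \ref{iiiDEc+m} $\Rightarrow$ \ref{iDEc+m}, which I expect to be the main point. Let $\Ki$ be the closed linear span of $\{M_\mu^n \xi : n \in \N\}$; since $M_\mu^n \xi$ is the function $t \mapsto t^n \xi(t)$, the space $\Ki$ contains $p \cdot \xi$ for every polynomial $p$. As polynomials are uniformly dense in $\mathcal C(\Sigma)$ by Stone--Weierstrass and $\Vert (p-g)\xi\Vert_2 \le \Vert p - g\Vert_\infty \Vert\xi\Vert_2$, it follows that $g \cdot \xi \in \Ki$ for every $g \in \mathcal C(\Sigma)$. To conclude $\Ki = L^2(\Sigma, \mu)$ I would take $\eta \perp \Ki$ and show $\eta = 0$: the condition $\langle g\xi \sca \eta\rangle = \int g\, \xi\overline{\eta}\, d\mu = 0$ for all $g \in \mathcal C(\Sigma)$, together with the density of $\mathcal C(\Sigma)$ in $L^1(\Sigma, \mu)$ (the function $\xi\overline{\eta}$ lies in $L^1$), forces $\xi\overline{\eta} = 0$ $\mu$-almost everywhere. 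Because $\mu(Z) = 0$ means $\xi \ne 0$ $\mu$-a.e., this yields $\eta = 0$, so $\Ki^\perp = \{0\}$ and $\xi$ is cyclic. The delicate step is precisely this density-plus-orthogonality argument, where the hypothesis $\mu(Z) = 0$ enters to cancel $\xi$ and recover $\eta = 0$.
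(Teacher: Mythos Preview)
Your proof is correct and follows essentially the same route as the paper: \ref{iDEc+m} $\Rightarrow$ \ref{iiDEc+m} via Proposition~\ref{dcyclic}, \ref{iiDEc+m} $\Rightarrow$ \ref{iiiDEc+m} via the indicator $1_Z$ (which is exactly the paper's argument, though your Radon--Nikodym computation $d\mu_\xi = \vert\xi\vert^2\,d\mu$ is a nice addition), and \ref{iiiDEc+m} $\Rightarrow$ \ref{iDEc+m} by taking $\eta \perp \Ki$ and deducing $\xi\overline{\eta} = 0$ $\mu$-a.e.

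One phrasing to tighten: in the last step you invoke ``the density of $\mathcal C(\Sigma)$ in $L^1(\Sigma,\mu)$'' to conclude $\xi\overline{\eta} = 0$ from $\int g\,\xi\overline{\eta}\,d\mu = 0$ for all $g \in \mathcal C(\Sigma)$. Norm-density of $\mathcal C(\Sigma)$ in $L^1$ is true but not what is being used here, since $\xi\overline{\eta}$ is only in $L^1$, not $L^\infty$, so you cannot pair it with arbitrary $L^1$ functions. What you need is that $\mathcal C(\Sigma)$ (or already the polynomials) is w*-dense in $L^\infty(\Sigma,\mu) = (L^1)^*$, so a vector in $L^1$ annihilated by all of $\mathcal C(\Sigma)$ under the duality pairing must be zero; equivalently, invoke Riesz to say the complex measure $\xi\overline{\eta}\,d\mu$ vanishes. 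This is precisely how the paper justifies the step.
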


\begin{proof}
Property~\ref{iDEc+m} implies Property~\ref{iiDEc+m}
by Proposition~\ref{dcyclic}.
\par

Suppose that~\ref{iiiDEc+m} does not hold.
Define $f \in \mathcal B (\Sigma)$ by
$f(t) = 1$ when $\xi(t) = 0$ and $f(t) = 0$ when $\xi(t) \ne 0$;
then $f(M_\mu) \ne 0$ and $f(M_\mu)\xi = 0$,
hence~\ref{iiDEc+m} does not hold.
This shows that~\ref{iiDEc+m} implies~\ref{iiiDEc+m}.
\par

Suppose that~\ref{iiiDEc+m} holds.
Denote by $L^2_\xi$ the closed subspace of $L^2(\Sigma, \mu)$
spanned by $\{ (M_\mu)^n\xi \}_{n \in \N}$.
We have to show that $L^2_\xi = L^2(\Sigma, \mu)$.
For this we consider $\eta \in L^2(\Sigma, \mu)$
orthogonal to $(M_\mu)^n\xi$ for all $n \ge 0$,
and we will show that $\eta = 0$.
The condition $\eta \perp (M_\mu)^n\xi$ reads
$$
\int_\Sigma t^n \xi(t) \overline{\eta(t)} d\mu(t) = 0
\hskip.5cm \text{for all} \hskip.2cm
n \ge 0 ,
$$
so that
$$
\int_\Sigma p(t) \xi(t) \overline{\eta(t)} d\mu(t) = 0
\hskip.5cm \text{for all} \hskip.2cm
p \in \mathcal P (\Sigma) ,
\leqno{(\sharp)}
$$
where $\mathcal P (\Sigma)$ is the algebra of functions on $\Sigma$
which are restrictions to $\Sigma$ of polynomial functions on $\R$.
Note that $\xi \overline{\eta}$ is in $L^1(\Sigma, \mu)$
and that $L^\infty(\Sigma, \mu)$ is the dual of the Banach space $L^1(\Sigma, \mu)$.
Note also that $\mathcal P (\Sigma)$ is w*-dense in $L^\infty(\Sigma, \mu)$,
because the algebra $\mathcal P (\Sigma)$ is dense in the Banach algebra $\mathcal C (\Sigma)$,
by the Stone--Weierstrass theorem,
and because the natural image of $\mathcal C (\Sigma)$ in $L^\infty(\Sigma, \mu)$
is w*-dense, see~\cite[Corollary 4.5.3]{Doug--72}.
Consequently, it follows from ($\sharp$) that
$\xi \overline{\eta} = 0$ in $L^1(\Sigma, \mu)$,
hence that $\xi (t) \overline{\eta (t)} = 0$ for $\mu$-almost all $t \in \Sigma$.
The last equality and Condition~\ref{iiiDEc+m}
imply that $\eta (t) = 0$ for $\mu$-almost all $t \in \Sigma$,
so that $\eta = 0$.
This shows that~\ref{iiiDEc+m} implies~\ref{iDEc+m}.
\end{proof}

\begin{cor}
\label{c=d}
Let $X$ be a bounded self-adjoint operator
on a separable Hilbert space $\Hi$.
Assume that there exists a cyclic vector for $X$.
\par
Then a vector in $\Hi$ is dominant for $X$ if and only if it is cyclic for $X$.
\end{cor}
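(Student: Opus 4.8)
The plan is to reduce the general statement to the model case of a multiplication operator, where Proposition~\ref{c+m} already gives the equivalence of cyclic and dominant vectors. The bridge is the standard spectral theorem in its multiplicity-free form: since $X$ admits a cyclic vector $\xi_0$ by hypothesis, there is a unitary isomorphism $U \colon \Hi \to L^2(\Sigma(X), \mu)$ where $\mu = \mu_{\xi_0}$, under which $X$ is carried to the multiplication operator $M_\mu$ and $\xi_0$ is carried to $1_\Sigma$. This is exactly the realization implicit in the discussion preceding Proposition~\ref{c+m}, and it is what makes $\mu$ a measure of closed support $\Sigma(X)$.

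First I would invoke this unitary equivalence to transport the problem: a vector $\eta \in \Hi$ is cyclic (respectively dominant) for $X$ if and only if $U\eta$ is cyclic (respectively dominant) for $M_\mu$. Both notions are manifestly unitarily invariant, since cyclicity is defined through the closed linear span of $\{X^n \eta\}_{n \in \N}$, which $U$ carries to the closed linear span of $\{(M_\mu)^n (U\eta)\}_{n \in \N}$, and dominance is defined through the local spectral measures $\mu_\eta$, which satisfy $\mu_\eta = \mu_{U\eta}$ because $\langle f(X)\eta \sca \eta\rangle = \langle f(M_\mu)(U\eta) \sca U\eta\rangle$ for all $f \in \mathcal C(\Sigma(X))$. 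This last identity is just the intertwining relation $U f(X) = f(M_\mu) U$ from functional calculus.

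Once the problem is transported, I would apply Proposition~\ref{c+m} directly: for the multiplication operator $M_\mu$ on $L^2(\Sigma(X),\mu)$, a vector is cyclic if and only if it is dominant. Pulling this equivalence back through $U$ yields that a vector in $\Hi$ is cyclic for $X$ if and only if it is dominant for $X$, which is the assertion of the corollary.

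The main obstacle is not conceptual but one of bookkeeping: one must be sure that the hypothesis ``there exists a cyclic vector for $X$'' is precisely what licenses the multiplicity-free spectral representation $\Hi \cong L^2(\Sigma(X),\mu)$ with $\mu$ of full support. If $X$ had no cyclic vector, no such single-measure model would exist, and indeed the equivalence fails in general (as the remark after Proposition~\ref{dcyclic} notes, there are dominant-but-not-cyclic vectors when $X$ is not multiplicity-free). So the role of the hypothesis is to guarantee multiplicity one, and the only care needed is to record that the cyclic vector $\xi_0$ produces a measure $\mu$ whose closed support is all of $\Sigma(X)$, as required by the setup of Proposition~\ref{c+m}.
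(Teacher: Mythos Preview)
Your proposal is correct and follows essentially the same approach as the paper's own proof: both invoke the multiplicity-free spectral representation $U \colon \Hi \to L^2(\Sigma(X), \mu_{\xi_0})$ furnished by the cyclic vector $\xi_0$, transport the question to the multiplication operator $M_\mu$, and then apply Proposition~\ref{c+m}. Your write-up is slightly more explicit about why cyclicity and dominance are unitarily invariant and about the role of the cyclic-vector hypothesis, but the argument is the same.
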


\begin{proof}
Let $\Sigma$ denote the spectrum of $X$,
let $\xi_0$ be a cyclic vector for $X$, 
and let $\mu$ denote the local spectral measure of $X$ at $\xi_0$.
Then $X$ is unitarily equivalent to the operator $M_\mu$
of Proposition~\ref{c+m}; more precisely,
there exists a surjective isometry $U \, \colon \Hi \to L^2(\Sigma, \mu)$
such that $X = U^* M_\mu U$
and $U \xi_0$ is the function on $\Sigma$ of constant value $1$.
This can be viewed as a form of the spectral theorem:
see \cite[Theorem~5.1.7]{Sim4--15}.
Note that the equality $UX = M_\mu U$ determines for each $n \ge 0$
the image of $X^n \xi_0$ by $U$; indeed
$(U X^n \xi_0)(t) = (M_\mu^n U \xi_0)(t) = t^n(U\xi_0)(t)$
for all $t \in \Sigma$,
so that $U X^n \xi_0$ is the function $t \mapsto t^n$ in $L^2(\Sigma, \mu)$.
\par

Since a vector $\xi \in \Hi$ is dominant [respectively cyclic] for $X$
if and only if $U\xi \in L^2(\Sigma, \mu)$ is dominant [respectively cyclic] for $M_\mu$,
the corollary follows from Proposition~\ref{c+m}.
\end{proof}

The next proposition is again straightforward,
but will play a crucial role in the proof of Theorem~\ref{exStars}.

\begin{prop}
\label{Sumdominant}
Let $\Hi = \bigoplus_{i \in I} \Hi_i$ be a Hilbert space given as
an orthogonal direct sum of a sequence of non-trivial subspaces,
where $I = \{1, 2, \hdots \}$ is countable (finite or infinite).
For each $i \in I$, let $\xi_i \ne 0$ be a vector in $\Hi_i$;
assume that $\sum_{i \in I} \Vert \xi_i \Vert^2 < \infty$,
and set $\xi = \sum_{i \in I} \xi_i \in \Hi$.
\par
Let $X$ be a self-adjoint operator on $\Hi$.
Assume that $\Hi_i$ is invariant by $X$,
and denote by $X_i$ the restriction of $X$ to~$\Hi_i$, for all $i \in I$.
\begin{enumerate}[label=(\roman*)]
\item\label{iDESumdominant}
If $\xi_i$ is dominant for $X_i$ for all $i \in I$,
then $\xi$ is dominant for $X$.
\item\label{iiDESumdominant}
If $\vert I \vert \ge 2$, then $\xi_i$ is not cyclic for $X$, for all $i \in I$.
\end{enumerate}
\end{prop}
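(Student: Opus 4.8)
The plan is to reduce both statements to the characterizations of dominant and cyclic vectors established earlier, working through the decomposition $\Hi = \bigoplus_{i \in I} \Hi_i$. For part~\ref{iDESumdominant}, I would use the characterization in~\ref{2DEdominant} of Proposition~\ref{dominant}: a vector is dominant if and only if $f(X)\xi = 0$ implies $f(X) = 0$ for all $f \in \mathcal B(\Sigma(X))$. The first step is to observe that, since each $\Hi_i$ is $X$-invariant, functional calculus respects the decomposition: for any $f \in \mathcal B(\Sigma(X))$ the operator $f(X)$ leaves each $\Hi_i$ invariant and its restriction to $\Hi_i$ is $f(X_i)$ (here one should note $\Sigma(X_i) \subseteq \Sigma(X)$, so the restriction of $f$ makes sense). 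Consequently $f(X)\xi = \sum_{i \in I} f(X_i)\xi_i$, and because the summands lie in orthogonal subspaces, $f(X)\xi = 0$ forces $f(X_i)\xi_i = 0$ for every $i$. Since each $\xi_i$ is dominant for $X_i$, this yields $f(X_i) = 0$ for all $i$, i.e.\ $f(X)$ vanishes on each $\Hi_i$ and hence $f(X) = 0$. This establishes that $\xi$ is dominant.

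For part~\ref{iiDESumdominant}, fix $i \in I$ and suppose $\vert I \vert \ge 2$, so there is some index $j \ne i$. The plan is to show that $\xi_i$, viewed as a vector in $\Hi$, cannot be cyclic for $X$. The key observation is that since $\Hi_i$ is $X$-invariant, the closed linear span of $\{X^n \xi_i : n \in \N\}$ is contained in $\Hi_i$, which is a proper closed subspace of $\Hi$ (properly contained because $\Hi_j \ne \{0\}$ is orthogonal to $\Hi_i$). By the definition of a cyclic vector, $\xi_i$ is cyclic only if this closed span is all of $\Hi$; since it is contained in the proper subspace $\Hi_i$, the vector $\xi_i$ is not cyclic for $X$.

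I expect no serious obstacle here; both parts are essentially bookkeeping with the orthogonal decomposition. The one point requiring a little care is the compatibility of functional calculus with the decomposition in part~\ref{iDESumdominant}, namely that $f(X)$ restricts to $f(X_i)$ on $\Hi_i$. This follows because the projection-valued spectral measure $E_X$ commutes with the orthogonal projection onto each invariant subspace $\Hi_i$, so that $E_X(B)$ restricts to $E_{X_i}(B \cap \Sigma(X_i))$; one then passes from indicator functions to general bounded Borel $f$ by the usual approximation. I would state this compatibility explicitly as the first line of the argument and treat the remainder as routine.
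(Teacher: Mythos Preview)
Your proposal is correct and follows essentially the same approach as the paper's proof: part~\ref{iDESumdominant} via the characterization~\ref{2DEdominant} of Proposition~\ref{dominant} and the decomposition $f(X)\xi = (f(X_i)\xi_i)_{i \in I}$, and part~\ref{iiDESumdominant} by observing that the closed span of $\{X^n\xi_i\}_{n \in \N}$ lies in the proper subspace $\Hi_i$. Your write-up is in fact more careful than the paper's, which simply asserts the compatibility of functional calculus with the decomposition without further comment.
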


\begin{proof}
\ref{iDESumdominant}
Let $f$ be a bounded Borel function on the spectrum of $X$ such that $f(X) \xi = 0$.
Note that $f(X) \xi = \left( f(X_i) \xi_i \right)_{i \in I}$.
If $\xi_i$ is dominant for $X_i$ for all $i \in I$, then $f(X_i) = 0$ for all $i \in I$,
hence $f(X) = 0$, therefore $\xi$ is dominant for $X$.
\par

\ref{iiDESumdominant}
Let $i \in I$.
The closed linear span of $\{X^n \xi_i : n \in \N\}$
is contained in $\Hi_i$, hence is a proper subspace of $\Hi$.
\end{proof}

\section{Vertex spectral measures and dominant vertices
\\
for adjacency operators of graphs}
\label{SectionDomVertices}

Let $G = (V, E)$ be a graph with $V$ countable, and of bounded degree.
Denote by $A$ the adjacency operator of $G$ and by $\Sigma (A)$ its spectrum.
The Hilbert space $\ell^2(V)$ has a canonical basis $(\delta_v)_{v \in V}$
of characteristic functions of vertices,
$\delta_v (v) = 1$ and $\delta_v(w) = 0$ for $w \in V \smallsetminus \{v\}$.
With respect to this basis, the adjacency operator $A$ of $G$
has matrix coefficients
$$
\langle A \delta_w \sca \delta_v \rangle =
\left\{\begin{aligned}
1 \hskip.2cm &\text{if} \hskip.2cm \{v, w\} \in E 
\\
0 \hskip.2cm &\text{otherwise} 
\end{aligned}\right.
$$
(we think of $v$ as a row index and of $w$ as a column index for the matrix of $A$).
Observe that $A = 0$ if and only if $G$ has no edge.
\par

The following proposition describes the convex hull of the spectrum $\Sigma (A)$.
Though most of it is quite standard (see \cite{Moha--82}
and \cite[Lemmas 10.1 and 1.7]{Woes--00}),
we did not find a convenient reference
for the precise inequalities of Claim~\ref{1DE1stqual},
indeed for $m_A < 0 < M_A = \Vert A \Vert$,
so that we decided to include Proposition~\ref{1stqual} here.

\begin{prop}[\textbf{smallest and largest spectral values}]
\label{1stqual}
Let $G = (V, E)$, $A$, and $\Sigma (A)$ be as above.
We assume that $E \ne \emptyset$.
\begin{enumerate}[label=(\roman*)]
\item\label{1DE1stqual}
We have $\Vert A \Vert \ge 1$ and
$$
-\Vert A \Vert \le \min \{ t \in \Sigma (A) \} \le -1
< 1 \le \max \{ t \in \Sigma (A) \} = \Vert A \Vert .
$$
\item\label{2DE1stqual}
If $G$ is bipartite, then $\Sigma (A)$ is symmetric with respect to zero,
so that in particular $\min \{ t \in \Sigma(A) \} = - \Vert A \Vert$.
\end{enumerate}
\end{prop}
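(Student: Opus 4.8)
The plan is to read off spectral data of the self-adjoint operator $A$ from its numerical range, using three standard consequences of the spectral theorem: for $A$ bounded self-adjoint one has
$$
\max \{ t \in \Sigma(A) \} = \sup_{\Vert \xi \Vert = 1} \langle A\xi \sca \xi \rangle,
\qquad
\min \{ t \in \Sigma(A) \} = \inf_{\Vert \xi \Vert = 1} \langle A\xi \sca \xi \rangle,
$$
together with $\Vert A \Vert = \sup_{\Vert \xi \Vert = 1} \vert \langle A\xi \sca \xi \rangle \vert$ (the numerical radius equals the norm). I would then verify the pieces of the displayed chain in~\ref{1DE1stqual} one at a time against these formulas.

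First I would extract explicit test vectors from a single edge $\{v,w\} \in E$, which exists because $E \ne \emptyset$. Cauchy--Schwarz gives $\vert \langle A \delta_w \sca \delta_v \rangle \vert \le \Vert A \Vert$, and since $\langle A \delta_w \sca \delta_v \rangle = 1$ this yields $\Vert A \Vert \ge 1$. Taking the unit vector $\xi = \tfrac{1}{\sqrt 2}(\delta_v - \delta_w)$ and using that the diagonal entries of $A$ vanish (there are no loops) while the two off-diagonal entries for this pair equal $1$, a short computation gives $\langle A\xi \sca \xi \rangle = -1$, whence $\min \{ t \in \Sigma(A) \} \le -1 < 0$. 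The bound $-\Vert A \Vert \le \min \{ t \in \Sigma(A) \}$ is automatic from $\Sigma(A) \subset [-\Vert A \Vert, \Vert A \Vert]$, and once the final equality $\max \{ t \in \Sigma(A) \} = \Vert A \Vert$ is established, the strict inequality $0 < \max \{ t \in \Sigma(A) \}$ follows from $\Vert A \Vert \ge 1$.

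The crux, and the step I expect to demand the most care, is the equality $\max \{ t \in \Sigma(A) \} = \Vert A \Vert$; this is exactly where non-negativity of the matrix entries of $A$ enters. For $\xi \in \ell^2(V)$ write $\vert \xi \vert$ for the vector $v \mapsto \vert \xi(v) \vert$, so that $\Vert \vert \xi \vert \Vert = \Vert \xi \Vert$. Grouping the double sum defining $\langle A\xi \sca \xi\rangle$ by edges (the rearrangement being legitimate since the family is absolutely summable, by the bounded-degree hypothesis) gives $\langle A\xi \sca \xi \rangle = \sum_{\{v,w\} \in E} 2\,\mathrm{Re}\big(\overline{\xi(v)}\,\xi(w)\big)$ and likewise $\langle A\vert\xi\vert \sca \vert\xi\vert \rangle = \sum_{\{v,w\} \in E} 2\,\vert\xi(v)\vert\,\vert\xi(w)\vert$, so that $\vert \langle A\xi \sca \xi \rangle \vert \le \langle A\vert\xi\vert \sca \vert\xi\vert \rangle$ term by term. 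Passing to suprema over unit vectors yields
$$
\Vert A \Vert = \sup_{\Vert \xi \Vert = 1} \vert \langle A\xi \sca \xi \rangle \vert
\le \sup_{\Vert \xi \Vert = 1} \langle A\vert\xi\vert \sca \vert\xi\vert \rangle
\le \sup_{\Vert \eta \Vert = 1} \langle A\eta \sca \eta \rangle
= \max \{ t \in \Sigma(A) \} \le \Vert A \Vert,
$$
forcing equality throughout and completing~\ref{1DE1stqual}.

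For~\ref{2DE1stqual}, suppose $V = V_1 \sqcup V_2$ is a bipartition with every edge joining $V_1$ to $V_2$. I would introduce the diagonal self-adjoint unitary $U$ defined by $U\delta_v = \delta_v$ for $v \in V_1$ and $U\delta_v = -\delta_v$ for $v \in V_2$, and check on basis vectors that $U A U^{-1} = -A$: the two endpoints of any edge lie in different parts, so $U$ takes opposite signs on $\delta_v$ and $\delta_w$, and hence each nonzero matrix entry of $A$ changes sign under conjugation by $U$. Since $A$ and $-A$ are then unitarily equivalent, $\Sigma(A) = \Sigma(-A) = -\Sigma(A)$, i.e. the spectrum is symmetric about $0$; combined with $\max \{ t \in \Sigma(A) \} = \Vert A \Vert$ from~\ref{1DE1stqual}, this gives $\min \{ t \in \Sigma(A) \} = -\Vert A \Vert$.
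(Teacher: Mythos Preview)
Your proof is correct and follows the same overall strategy as the paper: read off the spectral extrema from the numerical range, and use the key inequality $\vert \langle A\xi \sca \xi \rangle \vert \le \langle A\vert\xi\vert \sca \vert\xi\vert \rangle$ (exploiting non-negativity of the matrix entries) to force $\max \Sigma(A) = \Vert A \Vert$. The differences are in the auxiliary steps. For $\Vert A \Vert \ge 1$ and $\min \Sigma(A) < 0$, the paper passes to a finite connected subgraph and invokes Perron--Frobenius to produce test vectors, whereas you extract both facts directly from a single edge via $\delta_v, \delta_w$ and $\tfrac{1}{\sqrt 2}(\delta_v - \delta_w)$; your route is shorter and avoids the detour through finite graph theory. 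For~\ref{2DE1stqual}, the paper works with Weyl sequences and flips the sign on one part of the bipartition, while you package the same sign-flip as a unitary conjugation $UAU^{-1} = -A$; these are the same idea, but your formulation immediately gives symmetry of the full spectrum rather than arguing point by point.
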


\noindent \emph{Remarks.}
Let $X$ be a bounded self-adjoint operator on a Hilbert space $\Hi$.
Set
$$
\begin{aligned}
M_X = \sup \{ t \in \R &\mid t = \langle X\xi \mid \xi \rangle
\hskip.2cm \text{for some} \hskip.2cm
\xi \in \Hi , \hskip.1cm \Vert \xi \Vert = 1 \} ,
\\
m_X = \inf \{ t \in \R &\mid t = \langle X\xi \mid \xi \rangle
\hskip.2cm \text{for some} \hskip.2cm
\xi \in \Hi , \hskip.1cm \Vert \xi \Vert = 1 \} .
\end{aligned}
$$
It is well-known that
$$
M_X = \max \{ t \in \Sigma (X) \} , \hskip.5cm
m_X = \min \{ t \in \Sigma (X) \} ,
$$
so that the closed interval $\mathopen[ m_X, M_X \mathclose]$
is the convex hull of the spectrum $\Sigma (X)$.
Moreover $-\Vert X \Vert \le m_X \le M_X \le \Vert X \Vert$
and $\Sigma (X)$ contains at least one of $\Vert X \Vert, -\Vert X \Vert$;
see for example \cite[Problem 171]{Halm--67};
but $M_X$ need not be~$\ge 0$ and $m_X$ need not be $\le 0$.
\par

When $G$ is connected, we have moreover $\max \{ t \in \Sigma (A) \} \ge 2$
unless $G$ is in a well-known list of finite graphs, the finite Coxeter graphs;
the list is for example in \cite[Section~3.1.1]{BrHa--12},
see also Example~\ref{exCoxFinite} below.
Similarly, finite connected graphs with $\min \{ t \in \Sigma (A) \} \ge -2$
are very special, and well understood; see~\cite[Section 8.4]{BrHa--12};
for infinite connected graphs of bounded degree, 
$\min \{ t \in \Sigma (A) \} \le -2$.
\par

A connected finite graph such that $-\Vert A \Vert$ belongs to the spectrum
is bipartite \cite[Proposition 3.4.1]{BrHa--12}.
This does not carry over to infinite graphs; indeed, there exists
a connected infinite graph which is not bipartite and of which the spectrum $\Sigma$
is symmetric with respect to $0$;
for example the second ladder-like graph in \cite[Example 4.1]{Goli--21}.
See however Proposition~\ref{countloops}~\ref{4DEcountloops}.

\begin{proof}[Proof of Proposition~\ref{1stqual}]
\ref{1DE1stqual}
We will use known results of the spectral theory of finite graphs.
\par

Let $V_F$ be a finite subset of $V$ such that
the finite subgraph $F = (V_F, E_F)$ of $G$
induced by $V_F$ is connected, and with $E_F \ne \emptyset$.
Let $A_F$ be the adjacency operator of $F$.
Write $M_F$ for $M_{A_F}$ and $m_F$ for $m_{A_F}$;
note that $M_F$ is the largest eigenvalue of $F$, that $m_F$ is the smallest,
and that $m_F \le -1 < 1 \le M_F$.
There exists a Perron--Frobenius eigenvector $\xi_F \in \ell^2(V_F)$ of $A_F$
such that $A_F \xi_F = M_F \xi_F$ and $\Vert \xi_F \Vert = 1$.
Let $\xi \in \ell^2(V)$ be defined by $\xi(v) = \xi_F (v)$ for $v \in V_F$
and $\xi(v) = 0$ for $v \in V \smallsetminus V_F$.
Then
$$
1 \le \Vert A_F \Vert = M_F = \langle A_F \xi_F \sca \xi_F \rangle
= \langle A \xi \sca \xi \rangle \le M_A \le \Vert A \Vert .
$$
Similarly, there exists an eigenvector $\eta_F \in \ell^2(V_F)$ of $A_F$
such that $A_F \eta_F = m_F \eta_F$ and $\Vert \eta_F \Vert = 1$.
Let $\eta \in \ell^2(V)$ be defined by $\eta(v) = \eta_F (v)$ for $v \in V_F$
and $\eta(v) = 0$ for $v \in V \smallsetminus V_F$.
Then 
$$
-\Vert A \Vert \le m_A \le \langle A \eta \sca \eta \rangle
= \langle A_F \eta_F \sca \eta_F \rangle = m_F \le -1 .
$$
For the equality $M_A = \Vert A \Vert$,
see Proposition~\ref{countloops}~\ref{3DEcountloops} below.

\vskip.2cm

\ref{2DE1stqual}
Let $V = V' \sqcup V''$ be a disjoint union such that
any edge in $G$ connects a vertex in $V'$ and a vertex in $V''$.
Define an operator $S$ on $\ell^2(V)$ by $(S\xi)(v) = \xi(v)$ for all $v \in V'$
and $(S\xi)(v) = -\xi(v)$ for all $v \in V''$.
Then $S$ is unitary and selfadjoint (hence $S^2 = {\rm id}$), and $SAS = -A$.
It follows that $\Sigma(A) = \Sigma(-A) = -\Sigma(A)$ is symmetric with respect to $0$.
\end{proof}
 
Let $v, w$ be two vertices of $G$.
We write $\mu_{v, w}$ instead of $\mu_{\delta_v, \delta_w}$
for the local spectral measure on $\Sigma (A)$ at $\delta_v$ and $\delta_w$;
we write $\mu_v$ instead of $\mu_{\delta_v}$
for the local spectral measure on $\Sigma (A)$ at $\delta_v$,
defined as in Formula~\eqref{EqLocalSpecMeas} of Section~\ref{SectionDomVectors}.
As in the introduction, we call this $\mu_v$
the \textbf{vertex spectral measure} of $A$ at $v$;
it is also called the \textbf{Kesten spectral measure} of $A$ at $v$,
see \cite{Kest--59}, \cite[Section 10]{Grig--11}.
The interest of the measures $\mu_{v, w}$ is that their moments
count walks in $G$ from $v$ to~$w$.
Note that $\mu_v$ is a positive measure;
when $v \ne w$, the measure $\mu_{v, w}$ is a real measure
(because its moments are real, see~\ref{1DEcountloops} below),
but not a positive measure (because $\int_{\Sigma (A)} d\mu_{v, w} (t) = 0$).
\par

We denote by $d(v,w)$ the combinatorial distance in $G$ between $v$ and $w$,
if $v$ and $w$ are in the same connected component of $G$;
otherwise set $d(v,w) = \infty$.

\begin{prop}[\textbf{moments of vertex spectral measures and numbers of walks}]
\label{countloops}
Let $G = (V, E)$, $A$, and $\Sigma (A)$ be as above.
We assume that $G$ is connected, and is not the trivial graph with one vertex and no edge.
\begin{enumerate}[label=(\roman*)]
\item\label{1DEcountloops}
Let $v, w \in V$, and $\mu_{v, w}$ be as above; let $n$ be a non-negative integer.
The moment of order $n$ of $\mu_{v, w}$ counts walks in $G$:
$$
\int_{\Sigma (A)} t^n d\mu_{v, w} (t) = \langle A^n \delta_v \sca \delta_w \rangle =
\left\{
\begin{aligned}
& \text{number of walks in $G$}
\\
& \text{of length $n$ from $v$ to $w$.}
\end{aligned}
\right.
$$
In particular $\int_{\Sigma (A)} t^n d\mu_v(t)$
is the number of closed walks in $G$ of length $n$ at $v$.
\item\label{2DEcountloops}
Let $\xi \in \ell^2(V)$ be such that $\Vert \xi \Vert = 1$,
$\xi (v) \ge 0$ for all $v \in V$,
and $\xi (v) = 0$ for all $v$ outside a finite subset $F$ of $V$.
Then $\lim_{n \to \infty} \langle A^{2n} \xi \sca \xi \rangle^{1/2n} = \Vert A \Vert$.
\item\label{3DEcountloops}
For $v \in V$, we have
$$
\lim_{n \to \infty} \langle A^{2n} \delta_v \sca \delta_v \rangle^{1/2n}
= \lim_{n \to \infty} \sqrt[\leftroot{-3}\uproot{3}2n]{
\begin{aligned}
&\text{number of closed walks}
\\
&\text{in $G$ of length $2n$ at $v$}
\end{aligned}
}
= \Vert A \Vert 
$$
and $\max \{ t \in \text{closed support of} \hskip.2cm \mu_v \} = \Vert A \Vert$.
\item\label{4DEcountloops}
Let $v, w \in V$ be such that $d(v,w)$ is even [respectively odd].
The graph $G$ is bipartite if and only if the measure $\mu_{v, w}$ is symmetric
[respectively antisymmetric] with respect to the origin.
\end{enumerate}
\end{prop}

\begin{proof}
\emph{Step one is the proof of Claim \ref{1DEcountloops}}.
This is straightforward,
using the definitions and the rule of multiplication of matrices.

\vskip.2cm 

\emph{Step two: for $\xi$ as in \ref{2DEcountloops},
the limit $\rho_\xi = \lim_{n \to \infty} \langle A^{2n+2} \xi \sca \xi \rangle$ exists,
and $\Vert A \xi \Vert \le \rho_\xi \le \Vert A \Vert$.}
\par

Let $n \in \N$.
Observe that $\langle A^{2n} \xi \sca \xi \rangle > 0$,
because there exist at least one closed walk of length $2n$ at each vertex of $G$.
By the Cauchy--Schwartz inequality, we have
$$
\langle A^{2n+2} \xi \sca \xi \rangle^2
= \langle A^{n+2} \xi \sca A^n \xi \rangle^2
\le \langle A^{n+2}\xi \sca A^{n+2} \xi \rangle
\langle A^n \xi \sca A^n \xi \rangle
$$
hence
$$
\frac{ \langle A^{2n+2} \xi \sca \xi \rangle }{ \langle A^{2n} \xi \sca \xi \rangle }
\le
\frac{ \langle A^{2n+4} \xi \sca \xi \rangle }{ \langle A^{2n+2} \xi \sca \xi \rangle } .
$$
It follows that these quotients have a limit
$$
\lim_{n \to \infty} \frac{ \langle A^{2n+2} \xi \sca \xi \rangle }{ \langle A^{2n} \xi \sca \xi \rangle }
=
\sup_{n \ge 0} \frac{ \langle A^{2n+2} \xi \sca \xi \rangle }{ \langle A^{2n} \xi \sca \xi \rangle } .
$$
We denote by $\rho_\xi$ the square root of this limit.
Since $ \langle A^0 \xi \sca \xi \rangle = 1$, we have
$$
\frac{ \langle A^2 \xi \sca \xi \rangle }{ \langle A^0\xi \sca \xi \rangle } \hskip.1cm
\frac{ \langle A^4 \xi \sca \xi \rangle }{ \langle A^2\xi \sca \xi \rangle } \hskip.1cm
\frac{ \langle A^6 \xi \sca \xi \rangle }{ \langle A^4\xi \sca \xi \rangle } \hskip.1cm
\cdots \hskip.1cm
\frac{ \langle A^{2n} \xi \sca \xi \rangle }{ \langle A^{2n-2}\xi \sca \xi \rangle }
=
\langle A^{2n} \xi \sca \xi \rangle
$$
and therefore
\small
$$
\lim_{n \to \infty}
\langle A^{2n} \xi \sca \xi \rangle^{1/2n}
= \sqrt{ \lim_{n \to \infty} \Big(
\frac{ \langle A^2 \xi \sca \xi \rangle }{ \langle A^0 \xi \sca \xi \rangle } \hskip.1cm
\frac{ \langle A^4 \xi \sca \xi \rangle }{ \langle A^2 \xi \sca \xi \rangle } \hskip.1cm
\frac{ \langle A^6 \xi \sca \xi \rangle }{ \langle A^4 \xi \sca \xi \rangle } \hskip.1cm
\cdots \hskip.1cm
\frac{ \langle A^{2n} \xi \sca \xi \rangle }{ \langle A^{2n-2}\xi \sca \xi \rangle } \Big)^{1/n}
}
= \rho_\xi .
$$
\normalsize
Note that
$$
\rho_\xi = \lim_{n \to \infty} \langle A^{2n} \xi \sca \xi \rangle^{1/2n}
= \sqrt{ \sup_{n \ge 0}
\frac{ \langle A^{2n+2} \xi \sca \xi \rangle }{ \langle A^{2n} \xi \sca \xi \rangle } }
\ge \sqrt{
\frac{ \langle A^2 \xi \sca \xi \rangle }{ \langle \xi \sca \xi \rangle } }
= \Vert A \xi \Vert .
\leqno{(\text{sup})}
$$
Since $\langle A^{2n} \xi \sca \xi \rangle \le \Vert A^{2n} \Vert \le \Vert A \Vert^{2n}$
for all $n \ge 0$, we have also $\rho_\xi \le \Vert A \Vert$.

\vskip.2cm 

\emph{Step three: there exists a number $\rho_G$
such that $0 \le \rho_G \le \Vert A \vert$
and $\rho_\xi = \rho_G$ for all $\xi$ as in \ref{2DEcountloops}}.
\par

When $\xi = \delta_v$, the number written $\rho_{\delta_v}$ in Step two
is written $\rho_{v,v}$ below.
Let $v, w \in V$; set $k = d(v,w)$.
On the one hand, $\langle A^n \delta_v \mid \delta_v \rangle$
is for each $n \ge 0$ the number of closed walks in $G$ of length $n$ at $v$,
and $\lim_{n \to \infty} \langle A^{2n} \delta_v \mid \delta_v \rangle^{1/2n} = \rho_{v,v}$.
On the other hand, for $n \ge k$, every closed walk of length $n-k$ at $v$ is the beginning of
a walk of length $n$ from $v$ to $w$, so that we have
$$
\langle A^{n-k} \delta_v \mid \delta_v \rangle
\le \langle A^n \delta_w \mid \delta_v \rangle ,
\hskip.5cm \text{and similarly} \hskip.5cm
\langle A^n \delta_w \mid \delta_v \rangle
\le \langle A^{n+k} \delta_v \mid \delta_v \rangle .
$$
From this and from (sup), it follows that there exists a limit
$$
\begin{aligned}
& \rho_{v,w} = \lim_{n \to \infty} \langle A^{2n} \delta_w \mid \delta_v \rangle^{1/2n}
\hskip.5cm \text{if $k$ is even}
\\
& \rho_{v,w} = \lim_{n \to \infty} \langle A^{2n+1} \delta_w \mid \delta_v \rangle^{1/(2n+1)}
\hskip.5cm \text{if $k$ is odd}
\end{aligned}
$$
and that $\rho_{v,w} = \rho_{v,v}$.
Since walks from $v$ to $w$ are in bijection with walks from $w$ to $v$,
we have moreover $\rho_{v,w} = \rho_{w,v}$.
It follows that $\rho_{v,w}$ is independent of the choices of $v$ and $w$;
from now on, we write $\rho_G$ for this number.
Since $ \langle A^{2n+1} \delta_w \mid \delta_v \rangle
\le \sum_j \langle A^{2n} \delta_w \mid \delta_{v_j} \rangle$,
where the summation is over the neighbours $v_j$ of $v$ in $G$,
we have also
$$
\limsup_{n \to \infty} \langle A^n \delta_w \mid \delta_v \rangle^{1/n} = \rho_G
\hskip.5cm \text{for all} \hskip.2cm
v, w \in V .
\leqno{(\text{rho1})}
$$
Note that, for~\ref{3DEcountloops},
we still have to show that $\rho_G \ge \Vert A \Vert$.
\par

Let now $\xi$ and $F$ be as in~\ref{2DEcountloops}.
We have $\xi = \sum_{v \in F} \xi(v) \delta_v$ and
$$
\langle A^n \xi \sca \xi \rangle = \sum_{v,w \in F} \xi(v) \xi(w) \langle A^n \delta_w \sca \delta_v \rangle .
$$
Whenever $\xi(v) \xi(w) > 0$, we have
$$
\limsup_{n \to \infty} \Big( \xi(v) \xi(w) \langle A^n \delta_w \sca \delta_v \rangle \Big)^{1/n} = \rho_G
$$
by (rho1), and it follows that
$$
\begin{aligned}
\rho_\xi
&= \limsup_{n \to \infty} \langle A^n \xi \sca \xi \rangle^{1/n}
\\
& = \limsup_{n \to \infty} \bigg(
\sum_{v,w \in F} \xi(v) \xi(w) \langle A^n \delta_w \sca \delta_v \rangle
\bigg)^{1/n}
= \rho_G .
\end{aligned}
\leqno{(\text{rho2})}
$$

\vskip.2cm

\emph{Step four: we have $\rho_G = \Vert A \vert$
and 
$$
\max \{ t \in \text{closed support of} \hskip.2cm \mu_w \} = \Vert A \Vert 
$$
for all $w \in V$.}
\par

Choose $\varepsilon > 0$.
There exists $\eta \in \ell^2(V)$ such that $\Vert \eta \Vert = 1$,
and $\eta(v) = 0$ for all $v$ outside a finite subset $F$ of $V$,
and $\Vert A \eta \Vert \ge \Vert A \Vert - \varepsilon$.
Let $\xi \in \ell^2(V)$ be defined by $\xi (v) = \vert \eta (v) \vert$ for all $v \in V$.
Then $\Vert \xi \Vert = 1$,
and $\xi(v) = 0$ for all $v$ outside $F$,
and $\langle A^2 \xi \sca \xi \rangle \ge \langle A^2 \eta \sca \eta \rangle$.
It follows from (sup) and (rho2) above that
$$
\rho_G \ge \Vert A \xi \Vert \ge \Vert A \eta \Vert \ge \Vert A \Vert - \varepsilon ,
$$
and this shows that $\rho_G = \Vert A \Vert$.
\par

Let $\Sigma_w$ denote the closed support of $\mu_w$;
set $M_w = \max \{ t \in \Sigma_w \}$.
Choose $\varepsilon > 0$ and let $c$ be the positive number
$\mu_w \left( \mathopen[ M_w - \varepsilon, M_w \mathclose] \right)$. Then
$$
\begin{aligned}
\Vert A \Vert
& = \lim_{n \to \infty} \langle A^{2n} \delta_w \sca \delta_w \rangle^{1/2n}
= \lim_{n \to \infty} \bigg( \Big( \int_{\Sigma_w} t^{2n} d\mu_w (t) \Big)^{1/2n} \bigg)
\\
& \ge \lim_{n \to \infty}
\bigg( \Big( \int_{M_w - \varepsilon}^{M_w} t^{2n} d\mu_w (t) \Big)^{1/2n} \bigg)
\ge \lim_{n \to \infty} \bigg( \Big( c (M_w - \varepsilon)^{2n} \Big)^{1/2n} \bigg)
= M_w - \varepsilon ,
\end{aligned}
$$
hence $\Vert A \Vert = M_w$, and this ends the proof.

\vskip.2cm

For Claim~\ref{4DEcountloops}, we refer to \cite[Theorem 2.1]{GoMo--88}.
\end{proof}


\begin{defn}
\label{defdomv}
In a graph $G = (V, E)$,
a vertex $v \in V$ is
\textbf{dominant} if the vector $\delta_v \in \ell^2(V)$ is dominant for~$A$,
and \textbf{cyclic} if $\delta_v$ is cyclic for~$A$.
\par

By Proposition~\ref{dom=dom}, a vertex $v$ is dominant in $G$
as soon as the vertex spectral measure $\mu_v$ dominates $\mu_w$
for all $w \in V$.
\end{defn}

As examples below will show, the vertices of a graph need not be all dominant,
indeed some graphs have no dominant vertex at all.
In other terms, a vertex spectral measure need not be
a scalar-valued spectral measure.
\par

We provide in this section examples of graphs for which
it is easy to check that all vertices are dominant.
There are further examples in Section~\ref{SectionFE},
for which the determination of dominant vertices is not so straightforward,
and relies on the analysis of a family of Jacobi operator in Section~\ref{S+EofJc}.

\begin{prop}[\textbf{all vertices are dominant in vertex-transitive graphs}]
\label{transimpliesdom}
Let $G = (V, E)$ be a graph of bounded degree and $A$ its adjacency operator.
\begin{enumerate}[label=(\arabic*)]
\item\label{1DEtransimpliesdom}
If $v, w \in V$ are two vertices such that there exists an automorphism of $G$
mapping $v$ to $w$, the vertex spectral measures $\mu_v$ and $\mu_w$ are equal.
\item\label{2DEtransimpliesdom}
If the group of automorphisms of $G$ is transitive on $V$,
then all vertex spectral measures are equal, and all vertices are dominant.
\end{enumerate}
\end{prop}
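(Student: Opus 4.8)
The plan is to realize each graph automorphism as a unitary operator on $\ell^2(V)$ that commutes with the adjacency operator $A$, and then to transport vertex spectral measures along such unitaries. For part~\ref{1DEtransimpliesdom}, suppose $g$ is an automorphism of $G$ with $g(v) = w$. I would introduce the permutation operator $U_g$ on $\ell^2(V)$ determined on the canonical basis by $U_g \delta_x = \delta_{g(x)}$; since $g$ is a bijection of $V$, the operator $U_g$ is unitary, with $U_g^* U_g = I$. The one substantive point is that $U_g$ commutes with $A$: because $g$ preserves edges, we have $\langle A \delta_x \sca \delta_y \rangle = \langle A \delta_{g(x)} \sca \delta_{g(y)} \rangle$ for all $x, y \in V$, which rewrites as $U_g A U_g^{-1} = A$, i.e.\ $U_g A = A U_g$. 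Since $U_g$ commutes with the self-adjoint operator $A$, it lies in the commutant of $W^*(A)$ and therefore commutes with the entire spectral resolution, in particular with every spectral projection $E_A(B)$ for $B \in \mathcal B (\Sigma (A))$.

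Granting this, the equality of measures is immediate. For any Borel set $B \subset \Sigma (A)$, using $\delta_w = U_g \delta_v$, the commutation of $U_g$ with $E_A(B)$, and the unitarity of $U_g$, I compute
$$
\mu_w(B) = \langle E_A(B) \delta_w \sca \delta_w \rangle
= \langle E_A(B) U_g \delta_v \sca U_g \delta_v \rangle
= \langle U_g E_A(B) \delta_v \sca U_g \delta_v \rangle
= \langle E_A(B) \delta_v \sca \delta_v \rangle
= \mu_v(B),
$$
where the penultimate equality is $\langle U_g \zeta \sca U_g \zeta' \rangle = \langle \zeta \sca \zeta' \rangle$. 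As this holds for every Borel set $B$, we conclude $\mu_v = \mu_w$, proving part~\ref{1DEtransimpliesdom}.

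Part~\ref{2DEtransimpliesdom} then follows with no further work. If $\mathrm{Aut}(G)$ acts transitively on $V$, then for any two vertices $v, w$ there is an automorphism carrying $v$ to $w$, so part~\ref{1DEtransimpliesdom} gives $\mu_v = \mu_w$; hence all vertex spectral measures coincide. To deduce that every vertex $v$ is dominant, I would invoke the criterion recorded after Definition~\ref{defdomv} (namely Proposition~\ref{dom=dom} applied to the orthonormal basis $(\delta_w)_{w \in V}$): the vertex $v$ is dominant as soon as $\mu_v$ dominates $\mu_w$ for all $w \in V$. Since every $\mu_w$ equals $\mu_v$, each is trivially absolutely continuous with respect to $\mu_v$, so the criterion is met and $\delta_v$ is dominant. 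The only genuinely nontrivial ingredient is the commutation $U_g A = A U_g$ and its upgrade to commutation with all spectral projections $E_A(B)$; I expect no real obstacle here, as this upgrade is the standard fact that an operator commuting with a self-adjoint operator commutes with its whole spectral resolution, and everything else is bookkeeping with unitaries.
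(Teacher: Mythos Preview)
Your proof is correct and follows essentially the same approach as the paper: realize each automorphism as a unitary $U_g$ commuting with $A$ (and hence with its functional calculus), then transport. The only cosmetic differences are that the paper verifies~\ref{1DEtransimpliesdom} via moments $\langle A^n \delta_w \sca \delta_w \rangle = \langle A^n \delta_v \sca \delta_v \rangle$ rather than spectral projections, and for~\ref{2DEtransimpliesdom} it invokes the criterion~\ref{2DEdominant} of Proposition~\ref{dominant} (showing $f(A)\delta_v = 0 \Rightarrow f(A)\delta_w = U_g f(A)\delta_v = 0$ for all $w$) rather than deducing dominance from~\ref{1DEtransimpliesdom} via Proposition~\ref{dom=dom} as you do.
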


\begin{proof}
\emph{Preliminary step.}
An automorphism $\alpha$ of $G$ induces a unitary operator $U_\alpha$ on $\ell^2(V)$,
defined by $U_\alpha \delta_v = \delta_{\alpha(v)}$ for all $v \in V$.
The operator $U_\alpha$ commutes with $A$;
indeed, for all $v, w \in V$,
$$
\langle U_\alpha^* A U_\alpha \delta_v \sca \delta_w \rangle =
\langle A U_\alpha \delta_v \sca U_\alpha \delta_w \rangle =
\langle A \delta_{\alpha(v)} \sca \delta_{\alpha(w)} \rangle =
\left\{
\begin{aligned}
1 \hskip.2cm & \hskip.2cm
\text{if} \hskip.2cm \{\alpha(v), \alpha(w)\} \in E
\\
0 \hskip.2cm & \hskip.2cm
\text{if not,}
\end{aligned}
\right.
$$
so that $\langle U_\alpha^* A U_\alpha \delta_v \sca \delta_w \rangle
= \langle A \delta_v \sca \delta_w \rangle$;
it follows that $U_\alpha^* A U_\alpha = A$, i.e., that $U_\alpha$ commutes with $A$.
This implies in turn that $U_\alpha$ commutes with $f(A)$
for any function $f \in \mathcal B (\Sigma (A))$.

\vskip.2cm

\emph{Proof of~\ref{1DEtransimpliesdom}.}
Let $\alpha$ be an automorphism of $G$ such that $\alpha (v) = w$.
For all $n \ge 0$, we have
$$
\langle A^n \delta_w \sca \delta_w \rangle
= \langle A^n U_\alpha \delta_v \sca U_\alpha \delta_v \rangle
= \langle U_\alpha A^n \delta_v \sca U_\alpha \delta_v \rangle
= \langle A^n \delta_v \sca \delta_v \rangle
$$
and therefore $\int_{\Sigma (A)} t^n d\mu_w(t) = \int_{\Sigma (A)} t^n d\mu_v(t)$.
This implies that the measures $\mu_w$ and $\mu_v$ are equal.

\vskip.2cm

\emph{Proof of~\ref{2DEtransimpliesdom}.}
If $G$ is vertex-transitive,
all measures $\delta_v$ coincide,
and it follows from Proposition~\ref{dom=dom}
that $\delta_v$ is dominant for all $v \in V$.
\end{proof}

\begin{exe}[\textbf{all vertices are dominant in Cayley graphs,
in particular in grids and regular trees}]
\label{exLatticeTree}
In a Cayley graph, all vertices are dominant,
by Item~\ref{2DEtransimpliesdom} of Proposition~\ref{transimpliesdom}.
There are numerous families of well-studied graphs which are Cayley graphs:
\par

For example,
connected circulant graphs, ladder graphs, and Paley graphs
are finite Cayley graphs
(see for example Chapter 17 in \cite{Bigg--93}).
For any positive integer $d$, 
the $d$-dimensional square lattice $L^d$ is the Cayley graph
of the free abelian group $\Z^d$ with a free basis as generating set;
note that $L^1$ is the infinite line.
For $d \ge 3$ the regular tree $T^d$ of degree $d$ is the Cayley graph
of the free product of $d$ copies of the group of order two
with the canonical generating set of $d$ elements.
(The tree $T^2$ makes sense, but it is the same graph as the line $L^1$,
also known as the two-way infinite path.)
In all these graphs, all vertices are dominant.
\end{exe}

\begin{defn}
\label{defwalkr}
A graph $G$ is \textbf{walk-regular}
if the number of closed walks of length $n$ at a vertex $v$ of $G$
is independent of $v$ for all $n \ge 0$,
equivalently
if the measure~$\mu_v$ is independent on the vertex~$v$,
equivalently
if for all $k \ge 0$ the diagonal matrix coefficients of the power $A^k$
of the adjacency operator $A$ of $G$ are equal
(the terminology is that of \cite{GoMc--80}).
\end{defn}

Vertex-transitive graphs are walk-regular.
Distance-regular graphs (definition recalled in Example~\ref{DS}) are walk-regular.
Examples in \cite{GoMc--80} show that
a walk-regular graph need be neither vertex-transitive nor distance-regular.

\begin{prop}[\textbf{all vertices are dominant in walk-regular graphs}]
\label{wrg}
Let $G = (V, E)$ be a graph which is walk-regular.
Then any vertex $v \in V$ is dominant.
\end{prop}

\begin{proof}
For $G$ a walk-regular graph,
the moments of the measures $\mu_v$ do not depend on $v$
(see Proposition~\ref{countloops}),
hence the measures $\mu_v$ themselves do not depend on $v$,
hence all vertices are dominant by Proposition~\ref{dom=dom}.
\end{proof}

\begin{comp}
\label{compilation}
Here is a list of the various ``spectral measures'' which appear in this context.
For a bounded self-adjoint operator $X$ on a Hilbert space $\Hi$,
with spectrum $\Sigma$, the first three terms below
are defined in Section~\ref{SectionDomVectors}
and the fourth appears in Section~\ref{SectionFG}:
\begin{enumerate}[label=(\arabic*)]
\item\label{1DEcompilation}
The operator-valued \textbf{spectral measure} $E_X$,
a projection-valued measure such that $X = \int_\Sigma t dE_X(t)$.
\item\label{2DEcompilation}
The \textbf{scalar-valued spectral measures},
which are the finite positive measures on $\Sigma$
having the same Borel subsets of $\Sigma$ of measure zero as $E_X$.
\item\label{3DEcompilation}
The \textbf{local spectral measure} of $X$ at a vector $\xi \in \Hi$,
which is a scalar-valued spectral measure if and only if the vector $\xi$ is dominant.
\item\label{4DEcompilation}
When $\Hi$ is finite dimensional, and $X$ has eigenvalues $\lambda_1, \hdots, \lambda_s$
with multiplicities $m_1, \hdots, m_s$ respectively,
the \textbf{counting measure} $\tau$ is defined by
$\tau (\{\lambda_j\}) = \frac{m_j}{\dim \Hi}$;
it is a scalar-valued spectral measure on $\Sigma$.
\end{enumerate}
For the adjacency operator $A$ of a graph $G = (V, E)$ of bounded degree,
which is a bounded self-adjoint operator on $\ell^2(V)$, we have:
\begin{enumerate}[label=(\arabic*)]
\addtocounter{enumi}{4}
\item\label{5DEcompilation}
The \textbf{vertex spectral measure} $\mu_v$ at $v$
also called the \textbf{Kesten spectral measure} at $v$,
which is the local spectral measure of $A$ at the vector $\delta_v \in \ell^2(V)$,
and which is a scalar-valued spectral measure if and only the vertex $v$ is dominant
(see just before Proposition~\ref{countloops}).
When the graph $G$ is walk-regular
(and therefore regular of some degree $d$),
the measure $\mu_v$ is independent of $v$
and it is sometimes called the \textbf{Plancherel measure}
\cite[\S~3.4]{FiPi--83}, \cite[Page 219]{MoWo--89}.
\item\label{6DEcompilation}
When $G$ is finite, so that $A$ is as in~\ref{4DEcompilation} above,
the counting measure on the spectrum of $A$ is also called the \textbf{density of states},
as noted below, before Lemma~\ref{lemcountingmeasure}.
\end{enumerate}
\end{comp}

\section{The case of finite graphs}
\label{SectionFG}

We first recall some facts from linear algebra.
Consider a linear endomorphism $X$
of a finite dimensional vector space $\Hi$ of positive dimension.
For the next three definitions and for Proposition~\ref{proplinalg},
$X$ need not be self-adjoint,
indeed $\Hi$ can be a vector space over any field, say $\K$.
Let $\K [T]$ denote the ring of polynomial in one variable over~$\K$.
The \textbf{minimal polynomial} $P_{{\rm min}} \in \K [T]$ of $X$
is the monic polynomial which generates the ideal $\{ Q \in \K[T] : Q(X) = 0 \}$,
equivalently the monic polynomial of minimal degree such that $P_{{\rm min}} (X) = 0$.
For $\xi \in \Hi$,
the \textbf{minimal polynomial $P_\xi \in \K [T]$ of $(X, \xi)$}
is the monic polynomial which generates the ideal
$\{ Q \in \K[T] : Q(X)\xi = 0 \}$,
equivalently the monic polynomial of minimal degree such that $P_\xi (X)\xi = 0$.
Observe that $P_\xi$ divides $P_{{\rm min}}$.
A vector $\xi \in \Hi$ is \textbf{dominant for $X$} if, for $Q \in \K[T]$,
the equality $Q(X) \xi = 0$ implies $Q(X) = 0$.

\begin{prop}
\label{proplinalg}
Let $X$ be an endomorphism of a finite dimensional vector space~$\Hi$
over a field $\K$.
Let $P_{{\rm min}}$ be the minimal polynomial of $X$,
and let $s$ be its degree.
\begin{enumerate}[label=(\arabic*)]
\item\label{1alaGant}
There exist dominant vectors for $X$.
\item\label{2alaGant}
Let $\xi \in \Hi$ and let $P_\xi$ be the minimal polynomial of $(X, \xi)$.
The following conditions are equivalent:
\begin{enumerate}[label=(\roman*)]
\item\label{iDEproplinalg}
$\xi$ is dominant for $X$,
\item\label{iiDEproplinalg}
$P_\xi = P_{{\rm min}}$,
i.e.,
the degree of $P_\xi$ is $s$,
\item\label{iiiDEproplinalg}
the vectors $\xi, X\xi, X^2\xi, \hdots, X^{s-1}\xi$ are linearly independent.
\end{enumerate}
\end{enumerate}
\end{prop}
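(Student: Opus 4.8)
The plan is to prove the equivalences in part (2) first, treating them as a dictionary between annihilator ideals and divisibility in $\K[T]$, and then to settle the existence assertion in part (1) by an explicit construction. For the equivalence of (i) and (ii), I would rewrite both notions ideal-theoretically. The set $\{Q \in \K[T] : Q(X)\xi = 0\}$ is exactly the principal ideal $(P_\xi)$, and $\{Q : Q(X) = 0\}$ is $(P_{{\rm min}})$. By definition $\xi$ is dominant precisely when the first ideal is contained in the second, i.e. every multiple of $P_\xi$ is a multiple of $P_{{\rm min}}$, which says $P_{{\rm min}} \mid P_\xi$. Since $P_{{\rm min}}(X) = 0$ forces $P_{{\rm min}}(X)\xi = 0$, we always have $P_\xi \mid P_{{\rm min}}$; combining the two divisibilities with monicity gives $P_\xi = P_{{\rm min}}$, and the converse reverses these steps.

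For the equivalence of (ii) and (iii), I would use the standard identification of $\deg P_\xi$ with $\dim \K[X]\xi$, the dimension of the cyclic subspace generated by $\xi$. If $\deg P_\xi = d$, then $\xi, X\xi, \ldots, X^{d-1}\xi$ are linearly independent while $X^{d}\xi$ lies in their span, so the linear independence of $\xi, X\xi, \ldots, X^{s-1}\xi$ is equivalent to $\deg P_\xi \geq s$. As $P_\xi \mid P_{{\rm min}}$ always yields $\deg P_\xi \leq s$, property (iii) forces $\deg P_\xi = s$, which is (ii), and (ii) returns (iii) immediately.

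For the existence statement in part (1), I would first observe that $P_{{\rm min}} = \mathrm{lcm}(P_{e_1}, \ldots, P_{e_n})$ for any basis $e_1, \ldots, e_n$ of $\Hi$, since $Q(X) = 0$ holds iff $Q(X)e_i = 0$ for every $i$, i.e. iff $P_{e_i} \mid Q$ for all $i$. The task is then to produce, from vectors realizing the $P_{e_i}$, a single vector realizing their lcm, and this rests on two elementary sub-lemmas. First, a \emph{divisor trick}: if $P_\xi = f$ and $h \mid f$, then $\eta := (f/h)(X)\xi$ satisfies $P_\eta = h$, so from one vector I can extract a vector whose minimal polynomial is any prescribed divisor of $f$. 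Second, a \emph{coprime-sum lemma}: if $P_\xi = f$, $P_\eta = g$ and $\gcd(f,g) = 1$, then $P_{\xi + \eta} = fg$. To realize $\mathrm{lcm}(f,g)$ I would split it as $f'g'$ with $f' \mid f$, $g' \mid g$ and $\gcd(f',g') = 1$ (assigning each prime-power factor of the lcm to whichever of $f,g$ already carries it), extract $\xi'$ and $\eta'$ by the divisor trick, and add them. Iterating across the basis produces a vector $\xi$ with $P_\xi = P_{{\rm min}}$, which is dominant by (ii).

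I expect the genuine content to sit entirely in part (1), and within it in the coprime-sum lemma: the nontrivial direction is that $Q(X)(\xi + \eta) = 0$ forces $fg \mid Q$, which I would obtain by applying $f(X)$ to get $g \mid Qf$, symmetrically $f \mid Qg$, and then invoking coprimality via a B\'ezout argument to conclude $fg \mid Q$. The equivalences in part (2) are essentially formal once the annihilator ideals are in view, so the divisibility bookkeeping there should be routine; the one place demanding care is keeping the divisor trick and the prime-power splitting consistent, so that the two summands genuinely have coprime minimal polynomials and the product of their degrees matches $\deg \mathrm{lcm}(f,g)$.
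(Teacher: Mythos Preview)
Your argument is correct. Part~(2) matches the paper's proof essentially verbatim: both simply translate dominance into the containment of annihilator ideals $(P_\xi) \subset (P_{{\rm min}})$ and combine with the automatic divisibility $P_\xi \mid P_{{\rm min}}$.

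For part~(1) the paper takes a slightly different route. It invokes the primary decomposition $\Hi = \bigoplus_{j=1}^d V_j$ associated to the factorisation $P_{{\rm min}} = \prod_j Q_j^{\ell_j}$ into powers of irreducibles, picks in each $V_j$ a vector $\xi_j$ whose minimal polynomial is the full prime power $Q_j^{\ell_j}$, and cites Gantmacher for the verification that $\sum_j \xi_j$ is then dominant. Your approach reaches the same endpoint without ever decomposing the space: you start from $P_{{\rm min}} = \mathrm{lcm}_i\, P_{e_i}$ over a basis and assemble a dominant vector by repeated use of the divisor trick and the coprime-sum lemma. The two arguments are close cousins --- the coprime-sum lemma is precisely what makes $\sum_j \xi_j$ work in the paper's sketch --- but yours is more self-contained, since it avoids appealing to the primary decomposition theorem and to the existence of a vector realising each $Q_j^{\ell_j}$ inside $V_j$. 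The price is the extra bookkeeping of the prime-power splitting of $\mathrm{lcm}(f,g)$, which you have handled correctly. (One tiny slip: in your final sentence you presumably mean that the \emph{sum} of the degrees of $f'$ and $g'$, i.e.\ $\deg(f'g')$, equals $\deg \mathrm{lcm}(f,g)$.)
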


\begin{proof}
\ref{1alaGant}
Let $P_{{\rm min}} = \prod_{j=1}^d (Q_j)^{\ell_j}$
be the decomposition of the minimal polynomial
in product of powers of irreducible polynomials in $\K [T]$.
There exists a direct sum decomposition $\Hi = \bigoplus_{j=1}^d V_j$
in $X$-invariant subspaces such that, for each $j \in \{1, \hdots, d\}$,
the minimal polynomial of the restriction of $X$ to $V_j$ is $Q_j^{\ell_j}$.
For appropriate choices of vectors $\xi_j \in V_j$,
the sum $\sum_{j=1}^d \xi_j$ is dominant for $X$.
For the details, see ~\cite[\S~VII.2]{Gant--77}.
\par

Note that Gantmacher does not have a specific terminology
for the vectors which are called dominant here.
\par

In the particular case of a diagonalisable operator,
the $V_j$~'s are the eigenspaces of $X$
and $\sum_{j=1}^d \xi_j$ is dominant for $X$
for any choice of nonzero vectors $\xi_j \in V_j$.

\vskip.2cm

\ref{2alaGant}
It follows from the definitions that~\ref{iDEproplinalg} holds, i.e.,
that the ideals
$\{ Q \in \K[T] : Q(X) = 0 \}$ and $\{ Q \in \K[T] : Q(X) \xi = 0 \}$ coincide,
if and only if $P_\xi = P_{{\rm min}}$, i.e.,
if and only if~\ref{iiDEproplinalg} holds.
Also~\ref{iiDEproplinalg} holds, i.e.,
$Q(X)\xi \ne 0$ for any polynomial $Q \ne 0$ of degree at most $s-1$,
if and only if $\xi, X\xi, X^2\xi, \hdots, X^{s-1}\xi$ are linearly independent, i.e.,
if and only if~\ref{iiiDEproplinalg} holds.
\end{proof}

We return now to the situation of a self-adjoint operator $X$
acting on a finite dimensional Hilbert space $\Hi$.
A vector $\xi \in \Hi$ is dominant in the sense of Proposition~\ref{proplinalg}
if and only if
it is dominant in the sense of Section~\ref{SectionDomVectors},
see~\ref{2DEdominant} in Proposition~\ref{dominant}.
It is convenient to arrange the eigenvalues which constitute the spectrum $\Sigma (X)$
in decreasing order:
$$
\lambda_1 > \lambda_2 > \cdots > \lambda_s .
$$
For $j \in \{1, \hdots, s\}$, denote
by $E_j$ the eigenspace $\ker (\lambda_j - X)$.
The space $\Hi$ is the orthogonal direct sum
of the $X$-invariant eigenspaces:
$\Hi = \bigoplus_{j=1}^s E_j$.
For $\xi \in \Hi$, let $\xi_j$ be the orthogonal projection of $\xi$ in $E_j$,
so that $\xi = \sum_{j=1}^s \xi_j$ and $X \xi = \sum_{j=1}^s \lambda_j \xi_j$.
Since $\Sigma (X)$ is finite, the local spectral measure $\mu_\xi$
is a linear combination of the Dirac measures at the $\lambda_j$~'s.
The moments of $\mu_\xi$ of $X$ at $\xi$ are
\begin{equation}
\label{EqMomentMuxi}
\int_{\Sigma (X)} t^n d\mu_\xi(t) = \langle X^n \xi \sca \xi \rangle
= \bigg\langle \sum_{j=1}^s \lambda_j^n \xi_j 
\hskip.1cm \bigg\vert \hskip.1cm
\sum_{k=1}^s \xi_k \bigg\rangle
= \sum_{j=1}^s \lambda_j^n \Vert \xi_j \Vert^2 
\end{equation}
for all $n \ge 0$.
\par

Let $P_j \, \colon \Sigma (X) \to \C$ be the function
such that $P_j(\lambda_j) = 1$ and $P_j(\lambda_k) = 0$ for $k \ne j$,
equivalently the restriction to $\Sigma (X)$ of the polynomial defined by
$P_j(\lambda) = \big( \prod_{k \ne i} (\lambda_j - \lambda_k) \big)^{-1}
\prod_{k \ne j} (\lambda - \lambda_k)$.
Then
$$
\mu_\xi \big( \{ \lambda_j \} \big)
= \langle P_j(X) \xi \sca \xi \rangle = \langle \xi_j \sca \xi \rangle
= \Vert \xi_j \Vert^2 
\hskip.5cm
\text{which is $>0$ if and only if $\xi_j \ne 0$.}
$$
This shows that Conditions~\ref{ivDEproplinalgsa} and~\ref{vDEproplinalgsa}
of the following proposition are equivalent.

\begin{prop}
\label{proplinalgsa}
Let $X$ be a self-adjoint operator on a finite-dimensional Hilbert space $\Hi$.
Let $\xi \in \Hi$ and let $\mu_\xi$ be the local spectral measure of $X$ at $\xi$.
The condition for $\xi$ to be dominant
(Condition~\ref{iDEproplinalg} of Proposition~\ref{proplinalg})
is moreover equivalent to each of
\begin{enumerate}[label=(\roman*)]
\addtocounter{enumi}{3}
\item\label{ivDEproplinalgsa}
the orthogonal projection of $\xi$ on each eigenspace of $X$ is not zero,
\item\label{vDEproplinalgsa}
$\mu_\xi( \{\lambda\} ) > 0$ for every eigenvalue $\lambda$ of $X$.
\end{enumerate}
\end{prop}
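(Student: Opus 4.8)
The plan is to establish the equivalence of the dominance condition~\ref{iDEproplinalg} of Proposition~\ref{proplinalg} with Conditions~\ref{ivDEproplinalgsa} and~\ref{vDEproplinalgsa}. Since the equivalence \ref{ivDEproplinalgsa} $\Longleftrightarrow$ \ref{vDEproplinalgsa} has already been obtained from the moment formula~\eqref{EqMomentMuxi} and the identity $\mu_\xi(\{\lambda_j\}) = \Vert \xi_j \Vert^2$ recorded just before the statement, it suffices to prove that~\ref{iDEproplinalg} is equivalent to~\ref{ivDEproplinalgsa}. The one structural input I would use is that a self-adjoint $X$ on a finite-dimensional space is diagonalizable, so its minimal polynomial is $P_{\min}(T) = \prod_{j=1}^s (T - \lambda_j)$ with distinct roots. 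Hence, for any polynomial $Q$, the operator $Q(X)$ acts on $E_j = \ker(\lambda_j - X)$ as the scalar $Q(\lambda_j)$, and $Q(X) = 0$ if and only if $Q(\lambda_j) = 0$ for all $j$. Writing $\xi = \sum_{j=1}^s \xi_j$ with $\xi_j$ the projection of $\xi$ on $E_j$, orthogonality of the $E_j$ gives that $Q(X)\xi = \sum_{j=1}^s Q(\lambda_j)\xi_j$ vanishes if and only if $Q(\lambda_j) = 0$ for every index $j$ with $\xi_j \ne 0$.

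From these observations the two implications are immediate. First I would treat \ref{ivDEproplinalgsa} $\Longrightarrow$ \ref{iDEproplinalg}: if all $\xi_j \ne 0$ and $Q(X)\xi = 0$, then $Q(\lambda_j) = 0$ for every $j$, whence $Q(X) = 0$, which is exactly dominance. For the contrapositive of \ref{iDEproplinalg} $\Longrightarrow$ \ref{ivDEproplinalgsa}, suppose $\xi_{j_0} = 0$ for some $j_0$ and set $Q(T) = \prod_{j \ne j_0}(T - \lambda_j)$; then $Q(X)\xi = Q(\lambda_{j_0})\xi_{j_0} = 0$, while $Q(\lambda_{j_0}) = \prod_{j \ne j_0}(\lambda_{j_0} - \lambda_j) \ne 0$ forces $Q(X) \ne 0$, so $\xi$ is not dominant. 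Equivalently, one could appeal to Proposition~\ref{proplinalg}: dominance amounts to $P_\xi = P_{\min}$, and for the diagonalizable $X$ the minimal polynomial of $(X, \xi)$ is $P_\xi = \prod_{j \,:\, \xi_j \ne 0}(T - \lambda_j)$, which coincides with $P_{\min}$ precisely when no $\xi_j$ vanishes.

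I do not expect a real obstacle; once diagonalizability is invoked the argument is essentially bookkeeping. The only place needing care is to use self-adjointness, equivalently that $P_{\min}$ has simple roots: for a general endomorphism the minimal polynomial may carry higher powers $(T - \lambda_j)^{\ell_j}$, dominance is then governed by the full invariant-subspace decomposition of item~\ref{1alaGant} of Proposition~\ref{proplinalg}, and the clean equivalence with the nonvanishing of the eigenspace projections~\ref{ivDEproplinalgsa} would break down.
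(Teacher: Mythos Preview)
Your proof is correct. The equivalence \ref{ivDEproplinalgsa} $\Longleftrightarrow$ \ref{vDEproplinalgsa} is indeed already established before the statement via $\mu_\xi(\{\lambda_j\}) = \Vert \xi_j \Vert^2$, and your direct polynomial argument for \ref{iDEproplinalg} $\Longleftrightarrow$ \ref{ivDEproplinalgsa} is clean and complete.

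The paper takes a different route: rather than arguing with polynomials, it simply invokes Proposition~\ref{dominant}\ref{4DEdominant} (the general spectral-theoretic characterization: $\xi$ is dominant iff $\mu_\xi$ is a scalar-valued spectral measure, i.e., $\mu_\xi(B) = 0 \Leftrightarrow E_X(B) = 0$). In finite dimensions this specializes immediately to $\mu_\xi(\{\lambda_j\}) > 0$ for all $j$, giving \ref{iDEproplinalg} $\Longleftrightarrow$ \ref{vDEproplinalgsa} in one line. Your approach is more self-contained and stays entirely within linear algebra, which is arguably more in the spirit of this section; the paper's approach is shorter but leans on the operator-theoretic Section~\ref{SectionDomVectors}. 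Your alternative remark at the end, computing $P_\xi = \prod_{j : \xi_j \ne 0}(T - \lambda_j)$ and comparing with $P_{\min}$, is also a valid third route via Proposition~\ref{proplinalg}\ref{2alaGant}.
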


\begin{proof}
It suffices now to check that
one of the conditions above is equivalent
to one of the conditions of Proposition~\ref{proplinalg}.
This holds because Condition~\ref{vDEproplinalgsa} holds
if and only if $\xi$ is dominant,
see Proposition~\ref{dominant}~\ref{3DEdominant}.
\end{proof}

Consider again the adjacency operator of a finite graph.
Proposition~\ref{localspdominantvertex}
is a complement to Proposition~\ref{countloops}
and Proposition~\ref{domvertfg}
repeats for graphs the equivalent conditions of
Propositions~\ref{proplinalg} and ~\ref{proplinalgsa}.

\begin{prop}[\textbf{counting walks in terms of eigenvalues}]
\label{localspdominantvertex}
Let $G = (V, E)$ be a finite graph,
$\lambda_1 > \lambda_2 > \cdots > \lambda_s$
the eigenvalues of its adjacency operator~$A$,
and $v \in V$ a vertex of $G$.
For all $n \ge 0$, the number of closed walks in $G$ of length~$n$
starting at a vertex $v \in V$ and ending at $v$ is
\begin{equation}
\label{EqAnvv}
\langle A^n \delta_v \sca \delta_v \rangle
= \sum_{j=1}^s c_j^2 \lambda_j^n ,
\end{equation}
where the $c_j$~'s are non-negative numbers,
more precisely $c_j$ is the norm of the orthogonal projection of $\delta_v$
on the eigenspace of $A$ of eigenvalue $\lambda_j$.
\par

More generally, for two vertices $v$ and $w$,
there exist constants $d_1, \hdots, d_s \in \R$ such that
the number of walks in $G$ of length $n$ starting at $v$ and ending at $w$ is
\begin{equation}
\label{EqAnvw}
\langle A^n \delta_v \sca \delta_w \rangle = \sum_{j=1}^s d_j \lambda_j^n .
\end{equation}
\end{prop}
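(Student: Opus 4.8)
The plan is to read both formulas off directly from the moment computations \eqref{EqMomentMuxi} and \eqref{EqMomentMuxieta} already established for a self-adjoint operator on a finite-dimensional Hilbert space, specialized to $X = A$ acting on $\ell^2(V)$. First I would invoke \ref{1DEcountloops} of Proposition~\ref{countloops}, which identifies $\langle A^n \delta_v \sca \delta_w \rangle$ with the number of walks of length $n$ from $v$ to $w$; this reduces the statement to expanding these inner products in terms of the eigenvalues.

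For the closed-walk formula \eqref{EqAnvv}, I would write the orthogonal decomposition $\delta_v = \sum_{j=1}^s \xi_j$ along the eigenspaces $E_j = \ker(\lambda_j - A)$, so that $\xi_j$ is by definition the orthogonal projection of $\delta_v$ onto the $\lambda_j$-eigenspace. Formula~\eqref{EqMomentMuxi} applied to $\xi = \delta_v$ then gives $\langle A^n \delta_v \sca \delta_v \rangle = \sum_{j=1}^s \lambda_j^n \Vert \xi_j \Vert^2$, and setting $c_j = \Vert \xi_j \Vert \ge 0$ yields exactly \eqref{EqAnvv} together with the stated description of $c_j$. For the two-vertex case I would apply \eqref{EqMomentMuxieta} with $\xi = \delta_v$ and $\eta = \delta_w$: decomposing $\delta_w = \sum_{j=1}^s \eta_j$ as well, one obtains $\langle A^n \delta_v \sca \delta_w \rangle = \sum_{j=1}^s \lambda_j^n \langle \xi_j \sca \eta_j \rangle$, so \eqref{EqAnvw} holds with $d_j = \langle \xi_j \sca \eta_j \rangle$.

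The only point that is not pure bookkeeping — and the one step where I would pause — is verifying that the coefficients $d_j$ are real. This follows from the fact that the matrix of $A$ in the basis $(\delta_v)_{v \in V}$ is real and symmetric: each eigenspace $E_j$ is spanned by real vectors, the orthogonal projections $\xi_j, \eta_j$ of the real vectors $\delta_v, \delta_w$ onto $E_j$ are again real, and hence $\langle \xi_j \sca \eta_j \rangle \in \R$. I do not expect any genuine obstacle here; the proposition is a direct consequence of the finite-dimensional spectral decomposition, and the substance lies entirely in the two moment identities and in the realness remark just made.
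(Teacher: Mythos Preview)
Your proposal is correct and follows exactly the paper's approach: the paper does not give a separate proof of this proposition but presents it as the specialization ``In particular, when $X$ is the adjacency operator of a finite graph'' of the moment identities \eqref{EqMomentMuxi} and \eqref{EqMomentMuxieta} derived just above it, together with the walk-counting interpretation from Proposition~\ref{countloops}\ref{1DEcountloops}. Your explicit check that the $d_j$ are real (via the real symmetric matrix of $A$) is a small but worthwhile addition, since the paper simply asserts $d_j \in \R$ without comment.
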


\begin{proof}
For $j \in \{1, \hdots, s\}$, denote by $P_j$ the orthogonal projection
of $\ell^2(V)$ onto the eigenspace of $A$ of eigenvalue $\lambda_j$.
For $n \ge 0$ and $v \in V$, we have
$$
\langle A^n \delta_v \sca \delta_v \rangle
= \langle \sum_{j=1}^s \lambda_j^n P_j \delta_v \sca \delta_v \rangle
= \sum_{j=1}^s \langle P_j \delta_v \sca \delta_v \rangle \lambda_j^n ,
$$
and (4) follows. The proof of (5) is similar.
\end{proof}

Formulas~\eqref{EqAnvv} and~\eqref{EqAnvw} are well-known;
see for example \cite[Formulas~2.2.1 and~4.2.5]{CvRS--97}.
The number $c_j$ is $\cos( {\rm angle} (\delta_v, E_j) )$, and $c_j^2 \ge 0$.
Since $\Vert \delta_v \Vert = 1$, we have $\sum_{j=1}^s c_j^2 = 1$.
When $G$ is connected, note also that $c_1^2 > 0$,
because $\lambda_1$ has a Perron--Frobenius eigenvector,
of which all coordinates
with respect to the basis $(\delta_v)_{v \in V}$ are positive.
The number
$d_j = \cos( {\rm angle} (\delta_v, E_j) ) \cos( {\rm angle} (\delta_w, E_j) )$
can be positive, zero, or negative
(all cases occur in formulas for the path $P_3$).

\begin{prop}[\textbf{characterizations of dominant vertices in finite graphs}]
\label{domvertfg}
Let $G = (V, E)$ be a finite graph, $A$ its adjacency operator,
and $\ell^2(V) = \bigoplus_{j=1}^s E_j$ the orthogonal decomposition of $\ell^2(V)$
in eigenspaces of $A$.
For a vertex $v \in V$, the following properties are equivalent:
\begin{enumerate}[label=(\roman*)]
\item\label{iDEdomvertfg}
$v$ is a dominant vertex for $A$,
\item\label{iiiDEdomvertfg}
the minimal polynomial of $(A, \delta_v)$ is equal to the minimal polynomial
$\prod_{j=1}^s (T - \lambda_j)$ of $A$, 
\item\label{iiDEdomvertfg}
the vectors $\delta_v, A \delta_v, A^2 \delta_v, \hdots, A^{s-1}\delta_v$ in $\ell^2(V)$
are linearly independent,
\item\label{ivDEdomvertfg}
the orthogonal projection of $\delta_v$ on $E_j$ is not zero for all $j = 1, \hdots, s$,
\newline
i.e., $c_j^2 > 0$ for all $j = 1, \hdots, s$ in
Formula~\eqref{EqAnvv}
of Proposition~\ref{localspdominantvertex},
\item\label{vDEdomvertfg}
$\mu_v( \{\lambda\} ) > 0$ for every eigenvalue $\lambda$ of $A$.
\end{enumerate}
\end{prop}

Property~\ref{ivDEdomvertfg} of Proposition~\ref{domvertfg}
and Formula~\eqref{EqAnvv} above show that
the numbers of closed walks starting and ending at a dominant vertex
involve powers of all eigenvalues,
while the corresponding numbers at a non-dominant vertex
involve powers of some eigenvalues only.
However the difference is rather subtle.
For example, for $G = P_3$ the path with two end vertices $v, v'$, which are dominant,
and one middle vertex $w$, which is not dominant, we have
$$
\begin{matrix}
\langle A^{2m+1} \delta_v \sca \delta_v \rangle = 0
& \hskip.3cm \text{and} \hskip.3cm 
& \langle A^{2m+2} \delta_v \sca \delta_v \rangle = 2^m
& \hskip.3cm \text{for all} \hskip.3cm 
& m \ge 1 ,
\\
\langle A^{2m+1} \delta_w \sca \delta_w \rangle = 0
& \hskip.3cm \text{and} \hskip.3cm 
& \langle A^{2m+2} \delta_w \sca \delta_w \rangle = 2^{m+1}
& \hskip.3cm \text{for all} \hskip.3cm 
& m \ge 1 .
\end{matrix}
$$
More on the graphs $P_3$, and $P_n$ for all $n \ge 2$,
in the upcoming Example~\ref{finitepaths}.

\vskip.2cm

The next corollary shows the relationship between our terminology of dominant vertices
and that of null vertices of other authors.
Let $\xi \in \ell^2(V)$ be an eigenvector of $A$ of some eigenvalue $\lambda$.
A vertex $v \in V$ is a \textbf{null vertex for the vector} $\xi$ if $\xi(v) = 0$.
A vertex $v \in V$ is a \textbf{null vertex for an eigenvalue} $\lambda$
if it is a null vertex for any eigenvector of eigenvalue~$\lambda$.
Null vertices occur in the description of nodal domains for eigenvectors of $A$,
see~\cite{BiLS--07}. 
The terminology is also used in~\cite{AkGM--13}.

\begin{cor}[\textbf{finite graphs without dominant vertices}]
\label{nullvectorsonly}
Let $G = (V, E)$ be a finite graph, $v \in V$, and $A$ the adjacency operator of $G$.
Then
\begin{enumerate}[label=(\roman*)]
\item\label{iDEnullvectorsonly}
$v$ is not dominant if and only if $v$ is a null vertex
for some eigenvalue of~$A$.
\end{enumerate}
The following two properties are equivalent:
\begin{enumerate}[label=(\roman*)]
\addtocounter{enumi}{1}
\item\label{iiDEnullvectorsonly}
the graph $G$ does not have any dominant vertex,
\item\label{iiiDEnullvectorsonly}
any vertex in $V$ is a null vertex
for at least one eigenvalue of $A$.
\end{enumerate}
\end{cor}

Let $\Hi$ be a finite dimensional complex vector space,
with a basis $\varepsilon_* := (\varepsilon_j)_{j=1}^n$,
and let $X$ be a linear operator on $\Hi$
of which the matrix with respect to $\varepsilon_*$ has its entries in $\Q$.
Then $X$ can also be seen as a linear operator
on the $\Q$-vector space $\Hi_\Q := \bigoplus_{j=1}^n \Q \varepsilon_j$,
so that the characteristic polynomial of $X$,
the minimal polynomial of $X$,
and the minimal polynomial of $X$ at any vector in $\Hi_\Q$,
are polynomials in $\Q [T]$.
This applies in particular to the adjacency operator $A$ of a finite graph,
so that the \textbf{characteristic polynomial of the graph} (i.e., of $A$)
is a polynomial in $\Q [T]$;
as such, it can be reducible or irreducible.

\begin{prop}[\textbf{finite graphs with characteristic polynomials irredu\-cible over~$\Q$}]
\label{PropIrrQ}
Let $G$ be a finite graph of which the characteristic polynomial is irreducible over $\Q$.
Then all vertices of $G$ are dominant.
\end{prop}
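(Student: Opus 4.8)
The plan is to derive everything from the characterization of dominant vertices in finite graphs via minimal polynomials, namely Proposition~\ref{domvertfg}: a vertex $v$ is dominant if and only if the minimal polynomial of $(A, \delta_v)$ coincides with the minimal polynomial of $A$. The whole argument then amounts to exploiting that an irreducible polynomial has no nontrivial monic divisors.

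First I would record that, since $A$ has integer (hence rational) matrix entries in the basis $(\delta_w)_{w \in V}$, both its characteristic polynomial $\chi_A$ and its minimal polynomial $P_{{\rm min}}$ lie in $\Q[T]$, as noted just before the statement; moreover $P_{{\rm min}}$ is monic, divides $\chi_A$, and has degree $\ge 1$. Because $\chi_A$ is irreducible over $\Q$ by hypothesis, its only monic divisors in $\Q[T]$ are $1$ and $\chi_A$, whence $P_{{\rm min}} = \chi_A$. (One may note in passing that an irreducible polynomial over the characteristic-zero field $\Q$ is separable, so all $s$ eigenvalues of $A$ are simple and $s = \dim \ell^2(V)$, but this will not be needed.)

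Next I would fix a vertex $v \in V$ and consider the minimal polynomial $P_{\delta_v}$ of $(A, \delta_v)$. Since $\delta_v$ has rational coordinates and $A$ is a rational operator, $P_{\delta_v}$ lies in $\Q[T]$ by the remark preceding the proposition; it is monic and divides $P_{{\rm min}}$. As $\delta_v \ne 0$, the constant polynomial $1$ does not annihilate $\delta_v$, so $\deg P_{\delta_v} \ge 1$. Combining this with the fact that $P_{{\rm min}} = \chi_A$ is irreducible over $\Q$ forces $P_{\delta_v} = \chi_A = P_{{\rm min}}$. By Proposition~\ref{domvertfg}, this equality of minimal polynomials is exactly the condition that $v$ be dominant; since $v$ was arbitrary, all vertices of $G$ are dominant.

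There is essentially no hard step here: the entire content is that a monic divisor of the irreducible polynomial $\chi_A$ that is not the constant $1$ must be $\chi_A$ itself. The one place demanding care is the rationality bookkeeping, because the divisibility of $P_{\delta_v}$ into $P_{{\rm min}}$ must be paired with irreducibility over $\Q$ specifically; one must therefore be certain that $P_{\delta_v}$ genuinely has \emph{rational} coefficients, which is precisely what the preceding discussion of minimal polynomials of rational operators at rational vectors guarantees.
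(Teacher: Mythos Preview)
Your proof is correct and follows essentially the same route as the paper's: both use that the minimal polynomial $P_{\delta_v}$ of $(A,\delta_v)$ is a nonconstant monic divisor in $\Q[T]$ of the irreducible characteristic polynomial, hence equals it, and then invoke the minimal-polynomial characterization of dominant vertices. The only cosmetic difference is that the paper divides $P_v$ directly into the characteristic polynomial $P_A$, whereas you pass through $P_{{\rm min}}$ first; the content is identical.
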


\begin{proof}
Let $A$ be the adjacency operator of $G$
and $P_A$ its characteristic polynomial, viewed in $\Q [T]$.
Let $v$ be a vertex of the graph
and $P_v$ the minimal polynomial of the pair $(A, \delta_v)$.
Then $P_v$ divides $P_A$, and $P_v$ is not a constant polynomial.
If $P_A$ is irreducible, $P_v = P_A$
and therefore $v$ is dominant,
by Proposition~\ref{proplinalg}
(Note that, when $P_A$ is irreducible, it coincides with the minimal polynomial of $A$.)
\end{proof}

\begin{exe}[\textbf{irreducibility of characteristic polynomial
of Coxeter graph of type $E_8$}]
\label{exE8}
In the Coxeter graph of type $E_8$ (see Figure~1),
all vertices are dominant, because
its characteristic polynomial $P(T) = T^8 - 7 T^6 + 14 T^4 - 8 T^2 + 1$
is irreducible over~$\Q$.

\[\begin{tikzpicture}
\vertex[fill] (a) at (0,1) {};
\vertex[fill] (b) at (1,1) {};
\vertex[fill] (r) at (2,1) {};
\vertex[fill] (h) at (2,2) {};
\vertex[fill] (x) at (3,1) {};
\vertex[fill] (y) at (4,1) {};
\vertex[fill] (z) at (5,1) {};
\vertex[fill] (t) at (6,1) {};	\path
(a) edge (b)
(b) edge (r)
(r) edge (x)
(r) edge (h)
(x) edge (y)
(y) edge (z)
(z) edge (t) ;
\end{tikzpicture}\]
\vskip.1cm
\begin{center}
Figure 1. The graph $E_8$ of Example~\ref{exE8}
\end{center}
\vskip.2cm

Let us check that $P(T)$ is irreducible.
For a positive integer $n$, let $\Phi_n (T)$ denote the cyclotomic polynomial
with roots the $n^{{\rm th}}$ primitive roots of unity.
We have $\Phi_{60}(T) = \Phi_{30}(T^2) = T^{16} + T^{14} - T^{10} - T^8 -T^6 + T^2 + 1$.
The straightforward computation
$$
\begin{aligned}
& P \left( T + T^{-1} \right)
\\
& \hskip.2cm
\begin{matrix}
= & T^8 & +8 T^6 & +28T^4 & + 56 T^2 & + 70 & + 56 T^{-2} & + 28 T^{-4} & + 8 T^{-6} & + T^{-8}
\\
&& - 7 T^6 & -42 T^4 & - 105 T^2 & - 140 & -105 T^{-2} & - 42 T^{-4} & - 7 T^{-6} &
\\
&&& + 14 T^4 & + 56 T^2 & + 84 & + 56 T^{-2} & + 14 T^{-4} &&
\\
&&&& -8 T^2 & -16 & -8 T^{-2} &&&
\\
&&&&& + 1 &&&&
\\
= & T^8 & + T^6 && -T^2 & - 1 & - T^{-2} && + T^{-6} & + T^{-8} 
\end{matrix}
\end{aligned}
$$
shows that
$P \left( T + T^{-1} \right) = T^{-8} \Phi_{60} (T)$.
If $P (T)$ were reducible, $\Phi_{60}$ would also be reducible,
but this is not by a theorem of Gauss. It follows that $P(T)$ is irreducible.
\par

The characteristic polynomials of other Coxeter graphs are reducible.
See Examples~\ref{exCoxFinite} to~\ref{exCoxAffine} below.
\end{exe}

This and other examples of finite graphs
with characteristic polynomial irreducible over $\Q$
are shown in \cite{YLZH--21} and \cite{LiSi--22}.
Recall from the introduction that, conjecturally,
``most graphs'' have characteristic polynomial irreducible over $\Q$.

\vskip.2cm

Let $\Hi$ be a finite dimensional Hilbert space
and $X$ a self-adjoint operator on~$\Hi$.
The \textbf{counting measure} for $X$
is the probability measure $\tau$ on $\Sigma (X)$ defined by
$$
\tau (\{\lambda\})
= \frac{\text{multiplicity of} \hskip.2cm \lambda}{\dim \Hi}
\hskip.5cm \text{for each} \hskip.2cm
\lambda \in \Sigma (X) .
$$
The terminology ``counting measure'' is that of \cite{Grig--11};
other authors use \textbf{density of states}, or DOS,
a term which refers to solid state physics.
Note that $\tau$ is in particular a scalar-valued spectral measure for $X$.

\begin{lem}
\label{lemcountingmeasure}
Let $X$ be a self-adjoint operator on a Hilbert space $\Hi$
of finite dimension~$n$.
Let $(\varepsilon_i)_{1 \le i \le n}$ be an orthonormal basis of $\Hi$.
For $i \in \{1, \hdots, n\}$, denote by~$\mu_i$
the local spectral measure of $X$ at $\varepsilon_i$.
\par

Then $\tau = \frac{1}{n} \sum_{i=1}^n \mu_i$ is the counting measure for $X$.
\end{lem}

\begin{proof}
Let $\mathcal L (\Hi)$ denote the algebra of all operators on $\Hi$;
it is isomorphic to the matrix algebra ${\rm M}_n(\C)$.
Let $\C[X]$ be the subalgebra of $\mathcal L (\Hi)$ generated by~$X$;
any $Y \in \C[X]$ is of the form $f(X)$ for some function $f \, \colon \Sigma (X) \to \C$.
For $i \in \{1, \hdots, n\}$, the measure $\mu_i$,
viewed as a linear form on $\C[X]$, extends to the linear form
$$
\mathcal L (\Hi) \to \C ,
\hskip.2cm
Y \mapsto \langle Y \varepsilon_i \sca \varepsilon_i \rangle ,
\hskip.5cm
\text{again denoted by $\mu_i$}.
$$
The linear form $\tau = \frac{1}{n} \sum_{i=1}^n\mu_i$
is the normalized trace on $\mathcal L (\Hi)$.
\par

Denote by
$\lambda_1, \lambda_2, \hdots, \lambda_s$ the eigenvalues $X$
and by $m_1, m_2, \hdots, m_s$ their multiplicities.
For $j \in \{1, \hdots s\}$, let $P_j$ denote the orthogonal projection
of $\Hi$ onto the eigenspace $\ker( \lambda_j - X)$.
Let $Y \in \C[X]$;
let $f$ be a function $\Sigma (X) \to \C$ such that $Y = f(X)$;
we have
$$
\tau(Y) = \tau(f(X)) = \tau \Big( \sum_{j=1}^s f(\lambda_j) P_j \Big)
= \frac{1}{n} \sum_{j=1}^s f(\lambda_j) m_j .
$$
In the particular case of a characteristic function of the eigenvalue $\lambda_j$ of $X$,
for some $j \in \{1,\hdots, s\}$, this becomes
$\tau (\{\lambda_j\}) = \frac{m_j}{n} > 0$;
and this ends the proof.
\end{proof}

As a particular case, we obtain the following fact which is well-known,
and explicit for example in \cite[Proposition 10.2]{Grig--11}. 

\begin{prop}[\textbf{the counting measure is the average of the vertex spectral measures}]
For the adjacency operator $A$ of a finite graph $G = (V, E)$ with $n$ vertices,
$\frac{1}{n}$ times the sum of the vertex spectral measures at all vertices $v \in V$
is the counting measure on the spectrum of $A$.
\end{prop}

\section{Examples of finite graphs with and without dominant vertices}
\label{SectionFExamples}

\begin{exe}[\textbf{finite paths}]
\label{finitepaths}
Let $n \ge 2$.
The path $P_n$ has vertex set $\{1, 2, \hdots, n\}$
and edge set $\{ \{1, 2\}, \hdots, \{n-1, n\} \}$.
Denote by $A$ its adjacency operator, which is a symmetric $n$-by-$n$ matrix.
For $k \in \{1, 2, \hdots, n\}$, the vector $\xi_k$ defined by
$\xi_k (j) = \sin \left( \frac{j k}{n+1} \pi \right)$ for $j = 1, \hdots, n$
is an eigenvector of $A$
of eigenvalue $2 \cos \left( \frac{k}{n+1} \pi \right)$;
all these eigenvalues are simple, and there are no other eigenvalues.
\par

The vertex $j$ is dominant if and only if $j$ and $n+1$ are coprime.
Indeed, suppose first that $j$ and $n+1$ are coprime;
then $n+1$ does not divide $jk$, and therefore
$\langle \delta_j \sca \xi_k \rangle = \sin\left( \frac{jk}{n+1}\pi \right) \ne 0$
for $k \in \{1, \hdots, n\}$,
hence $j$ is dominant (see Proposition~\ref{domvertfg}~\ref{ivDEdomvertfg}).
Suppose then that $j$ and $n+1$ have a common divisor $\ell \ge 2$;
then, with $k = (n+1)/\ell$, we have
$\langle \delta_j \sca \xi_k \rangle = \sin\left( \frac{jk}{n+1}\pi \right)
= \sin\left( \frac{j}{\ell}\pi \right) = 0$,
hence $h$ is not dominant.

\[\begin{tikzpicture}
\vertex[fill] (a) at (0,1) {};
\vertex[] (b) at (1,1) {};
\vertex[] (c) at (2,1) {};
\vertex[] (d) at (3,1) {};
\vertex[fill] (e) at (4,1) {}; \path
(a) edge (b)
(b) edge (c)
(c) edge (d)
(d) edge (e) ;
\end{tikzpicture}\]

\[\begin{tikzpicture}
\vertex[fill] (a) at (0,1) {};
\vertex[fill] (b) at (1,1) {};
\vertex[fill] (c) at (2,1) {};
\vertex[fill] (d) at (3,1) {};
\vertex[fill] (e) at (4,1) {};
\vertex[fill] (f) at (5,1) {}; \path
(a) edge (b)
(b) edge (c)
(c) edge (d)
(d) edge (e)
(e) edge (f) ; 
\end{tikzpicture}\]

\[\begin{tikzpicture}
\vertex[fill] (a) at (0,1) {};
\vertex[] (b) at (1,1) {};
\vertex[fill] (c) at (2,1) {};
\vertex[] (d) at (3,1) {};
\vertex[fill] (e) at (4,1) {};
\vertex[] (f) at (5,1) {}; 
\vertex[fill] (g) at (6,1) {};	\path
(a) edge (b)
(b) edge (c)
(c) edge (d)
(d) edge (e) 
(e) edge (f) 
(f) edge (g) ;
\end{tikzpicture}\]

\[\begin{tikzpicture}
\vertex[fill] (a) at (0,1) {};
\vertex[fill] (b) at (1,1) {};
\vertex[] (c) at (2,1) {};
\vertex[fill] (d) at (3,1) {};
\vertex[fill] (e) at (4,1) {};
\vertex[] (f) at (5,1) {}; 
\vertex[fill] (g) at (6,1) {};
\vertex[fill] (h) at (7,1) {}; \path
(a) edge (b)
(b) edge (c)
(c) edge (d)
(d) edge (e) 
(e) edge (f) 
(f) edge (g) 
(g) edge (h) ;
\end{tikzpicture}\]
\vskip.1cm
\begin{center}
Figure 2. Graphs P$_5$, P$_6$, P$_7$, P$_8$ of Example~\ref{finitepaths}
with dominant vertices in black
\end{center}
\vskip.2cm

In particular, the two end vertices $1$ and $n$ are always dominant.
When $n+1$ is prime, all vertices are dominant.
If $n$ is odd and at least $3$, the middle vertex $\frac{n+1}{2}$ is not dominant.
\end{exe}

\begin{exe}[\textbf{complete bipartite graphs}]
\label{ComBipKmn}
Let $m$ and $n$ be two positive integers, such that $1 \le m \le n$.
The complete bipartite graph $K_{m,n}$
has vertex set $V = \{v_1, \hdots, v_m, w_1, \hdots, w_n\}$
and an edge $\{v_j, w_k\}$ for all $j = 1, \hdots.m$ and $k = 1, \hdots, n$.
Its adjacency operator has two simple eigenvalues $\sqrt{mn}$, $-\sqrt{mn}$,
and one eigenvalue $0$ of multiplicity $m+n-2$.
The function $\xi^+$ such that $\xi^+(v_j) = \sqrt{n}$ and $\xi^+(w_k) = \sqrt{m}$
for all $j = 1, \hdots, m$ and $k = 1, \hdots, n$
is an eigenvector of eigenvalue $\sqrt{mn}$,
the function $\xi^-$ such that $\xi^-(v_j) = \sqrt{n}$ and $\xi^-(w_k) = -\sqrt{m}$
for all $j = 1, \hdots, m$ and $k = 1, \hdots, n$
is an eigenvector of eigenvalue $-\sqrt{mn}$.
The space $\{ \xi \in \ell^2(V) : \sum_{j=1}^m \xi(v_j) = 0, \hskip.1cm \sum_{k=1}^n \xi(w_k) = 0 \}$
is the eigenspace of eigenvalue $0$, of dimension $m+n-2$.
\par

When $m=1$ and $n \ge 2$,
the star $K_{1, n}$ has a unique vertex $v_1$ which is not dominant,
and all leaves $w_1, \hdots, w_n$ are dominant.
When $2 \le m \le n$, all vertices of $K_{m,n}$ are dominant.
\end{exe}

\begin{exe}[\textbf{first graphs without dominant vertices}]
\label{CB1}
It is easy to find graphs without dominant vertices.
For example, let $G'$ and $G''$ be two finite graphs
such that at least one eigenvalue of $G'$ is not an eigenvalue of $G''$
and at least one eigenvalue of $G''$ is not an eigenvalue of $G'$;
the non-connected graph $G' \sqcup G''$
does not have any dominant vertex,
by Corollary~\ref{nullvectorsonly}.
Other disconnected graphs without any dominant vertex appear in
Remark~\ref{complement} and Example~\ref{regwithoutdominant}.
\par

We describe here a sequence of connected finite graphs without dominant vertices.
For $n \ge 2$, let $G_n$ be the graph with $n+2$ vertices $v_0, v'_0,$ $v_1, \hdots, v_n$
and $2n+1$ edges $\{v_0,v'_0\}, \{v_0, v_1\}, \{v'_0, v_1\}, \hdots, \{v_0, v_n\}, \{v'_0, v_n\}$.

\[\begin{tikzpicture}
\vertex[] (a) at (0,1) [label=left:$v_0$]{};
\vertex[] (b) at (0,3) [label=left:$v_0'$]{};
\vertex[] (x) at (1,2) [label=right:$v_1$]{};
\vertex[] (y) at (2,2) [label=right:$v_2$]{};	
\vertex[] (z) at (3,2) [label=right:$v_3$]{};	\path
(a) edge (b)
(a) edge (x)
(a) edge (y)
(a) edge (z)
(b) edge (x)
(b) edge (y)
(b) edge (z) ;
\end{tikzpicture}\]
\vskip.1cm
\begin{center}
Figure 3. The graph $G_3$ of Example~\ref{CB1}, without dominant vertices
\end{center}
\vskip.2cm

The eigenvalues of $G_n$ are
$$
\begin{aligned}
\lambda^+_n &= \frac{1}{2}( 1 + \sqrt{ 8n+1 })
\hskip.5cm \text{simple Perron--Frobenius eigenvalue}
\\
\lambda^0_n &= 0
\hskip.5cm \text{of multiplicity $n-1$}
\\
\lambda^{-1}_n &= -1
\hskip.5cm \text{simple eigenvalue}
\\
\lambda^-_n &= \frac{1}{2}( 1 - \sqrt{ 8n+1 })
\hskip.5cm \text{simple eigenvalue}
\end{aligned}
$$
respectively with eigenvectors given by
$$
\begin{aligned}
\xi^+_n(v_0) = \xi^+_n(v'_0) = \frac{1}{4}( 1+ \sqrt{ 8n+1 }), \hskip.5cm
&\xi^+_n(v_1) = \cdots = \xi^+_n(v_n) = 1
\\
\xi^0_n(v_0) = \xi^0_n(v'_0) = 0, \hskip.5cm
&\xi^0_n(v_1) + \cdots + \xi^0_n(v_n) = 0
\\
\xi^{-1}_n(v_0) = 1, \hskip.1cm \xi^{-1}_n(v'_0) = -1, \hskip.5cm
&\xi^{-1}_n(v_1) = \cdots = \xi^{-1}_n(v_n) = 0
\\
\xi^-_n(v_0) = \xi^-_n(v'_0) = \frac{1}{4}( 1- \sqrt{ 8n+1 }), \hskip.5cm
&\xi^-_n(v_1) = \cdots = \xi^-_n(v_n) = 1 .
\end{aligned}
$$
Observe that $v_0$ and $v'_0$ are null vertices for the multiple eigenvalue $0$,
and that the other $v_1, v_2, \hdots, v_n$ are null vertices for the simple eigenvalue $-1$.
It follows from Corollary~\ref{nullvectorsonly}
that $G_n$ does not have any dominant vertex.
\par
Note that $G_n$ is an integral graph, i.e.,
that $\frac{1}{2}( 1 \pm \sqrt{ 8n+1 })$ are integers,
for $n= 3, 6, 10, 15, 21, 28, ...$.
\end{exe}

\begin{rem}
\label{complement}
There are thirty connected graphs on $2$, $3$, $4$ or $5$ vertices.
Five of them have no dominant vertex at all;
they are described by Figure~4.
Each of them is the complement of a non-connected graph
without dominant vertices.
(Recall that the complement of a graph $G = (V, E)$
is the graph $G' = (V, E')$ where two distinct vertices $v,w \in V$
are connected by an edge in $E'$ if an only if
they are not connected by an edge in $E$.)
The five graphs appear in \cite{CoSi--57}
and \cite[Appendix B, graphs $5, 11, 15, 17, 23$]{CvRS--97}.
The first and the third graph of Figure~4
are respectively graphs $G_2$ and $G_3$ of Example~\ref{CB1}.

\[\begin{tikzpicture}
\vertex[] (a) at (0,0) {};
\vertex[] (b) at (0,1.5) {};
\vertex[] (c) at (1.5,1.5) {};	
\vertex[] (d) at (1.5,0) {};
\vertex[] (e) at (3.3,0) {};
\vertex[] (f) at (4.7,0) {};
\vertex[] (g) at (5,1) {};
\vertex[] (h) at (4,1.5) {};
\vertex[] (j) at (3,1) {};
\vertex[] (m) at (6.3,0) {};
\vertex[] (n) at (7.9,0) {};
\vertex[] (p) at (7.1,0.7) {};
\vertex[] (q) at (7.1,1.2) {};
\vertex[] (r) at (7.1,1.7) {};
\path
(a) edge (b)
(b) edge (c)
(c) edge (d)
(d) edge (a)
(a) edge (c)
(f) edge (g)
(g) edge (h)
(h) edge (j)
(j) edge (e)
(e) edge (g)
(g) edge (j)
(j) edge (f)
(f) edge (h)
(h) edge (e)
(m) edge (n)
(m) edge (p)
(m) edge (q)
(m) edge (r)
(n) edge (p)
(n) edge (q)
(n) edge (r) ;
\end{tikzpicture}\]
\vskip.2cm
\[\begin{tikzpicture}
\vertex[] (a) at (0,0) {};
\vertex[] (b) at (0,1.4) {};
\vertex[] (c) at (0.7,0.7) {};	
\vertex[] (d) at (1.4,0) {};
\vertex[] (e) at (1.4,1.4) {};
\vertex[] (f) at (3.5,0) {};
\vertex[] (g) at (4.9,0) {};
\vertex[] (h) at (4.2,0.7) {};
\vertex[] (j) at (4.9,1.4) {};
\vertex[] (k) at (3.5,1.4) {};
\path
(a) edge (b)
(b) edge (e)
(e) edge (d)
(d) edge (a)
(c) edge (a)
(c) edge (b)
(c) edge (d)
(h) edge (f)
(h) edge (g)
(h) edge (j)
(h) edge (k)
(j) edge (k) ;
\end{tikzpicture}\]
\vskip.1cm
\begin{center}
Figure 4. The connected graphs on 5 or less vertices without dominant vertices
\end{center}
\vskip.2cm
\end{rem}

\begin{exe}
\label{CB2}
For integers $n, k$ such that $n \ge 4$ and $2 \le k \le (n+1)/2$,
let $X_{n,k}$ denote the tree obtained from two paths $P_n$,
respectively with sets of vertices $v_1, \hdots, v_n$ and $v'_1, \hdots v'_n$
(in this order), by gluing $v_k$ and $v'_k$.
Thus $X_{n,k}$ has $2n-2$ edges and $2n-1$ vertices,
one of them of degree $4$, and $2n-6$ of degree $2$, and four leaves of degree $1$.
In the next proposition, we show that the $X_{n,k}$~'s with $k$ and $n-k+1$ coprime,
namely the graphs $X_{4,2}$, $X_{6,2}$, $X_{6,3}$, $X_{7,3}$, $X_{8,2}$, $X_{8,4}$, $\hdots$,
have no dominant vertices.
\end{exe}

\begin{prop}
[\textbf{a sequence of trees without dominant vertices}]
\label{CB2+}
Consider two integers $n, k$ such that $n \ge 4$ and $2 \le k \le (n+1)/2$,
and let $X_{n,k}$ be the tree defined in Example~\ref{CB2}.
Suppose moreover that $k$ and $n-k+1$ are coprime.
\par
Then the tree $X_{n,k}$ does not have any dominant vertex.
\end{prop}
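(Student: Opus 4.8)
The plan is to apply Corollary~\ref{nullvectorsonly}: it suffices to show that every vertex of $X_{n,k}$ is a null vertex for at least one eigenvalue of its adjacency operator~$A$. Write $c = v_k = v'_k$ for the vertex of degree~$4$; four arms are attached to it, namely two \emph{short} arms $S_1 = \{v_1, \ldots, v_{k-1}\}$, $S_2 = \{v'_1, \ldots, v'_{k-1}\}$ of length $k-1$, and two \emph{long} arms $L_1 = \{v_{k+1}, \ldots, v_n\}$, $L_2 = \{v'_{k+1}, \ldots, v'_n\}$ of length $n-k$. There are two commuting involutive automorphisms: $\tau_S$ exchanging $S_1 \leftrightarrow S_2$ and fixing everything else, and $\tau_L$ exchanging $L_1 \leftrightarrow L_2$ and fixing everything else. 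As in the preliminary step of Proposition~\ref{transimpliesdom}, the induced unitaries commute with $A$, so $A$ preserves each of the four joint eigenspaces $\Hi^{\pm\pm}$ of $(\tau_S, \tau_L)$. The first step is to identify the blocks. A $\tau_S$-antisymmetric vector vanishes on the fixed set $\{c\}\cup L_1 \cup L_2$ and is supported on $S_1 \cup S_2$; it is automatically $\tau_L$-symmetric, so $\Hi^{--}=0$ and $\Hi^{-+}$ is exactly the $\tau_S$-antisymmetric subspace. On it the eigenvalue equation decouples at $c$ (whose value is $0$), and $A|_{\Hi^{-+}}$ is the adjacency operator of $P_{k-1}$, its eigenvectors supported on the short arms and vanishing at $c$ and on both long arms. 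Symmetrically $A|_{\Hi^{+-}}$ is the adjacency operator of $P_{n-k}$, with eigenvectors supported on the long arms and vanishing at $c$ and on both short arms. The remaining block $\Hi^{++}$, of dimension $n$, carries the fully symmetric vectors.

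The crux is to show that the three spectra $\Sigma(P_{k-1})$, $\Sigma(P_{n-k})$ and $\Sigma(A|_{\Hi^{++}})$ are pairwise disjoint. Let $p_m$ be the characteristic polynomial of $P_m$, so $p_m = T p_{m-1} - p_{m-2}$ with $p_0=1$, $p_1=T$, and the roots are $2\cos\frac{j\pi}{m+1}$ for $1\le j\le m$. First $\Sigma(P_{k-1}) \cap \Sigma(P_{n-k}) = \emptyset$: an equality $2\cos\frac{j\pi}{k} = 2\cos\frac{i\pi}{n-k+1}$ forces $j(n-k+1)=ik$, hence $k \mid j$ since $\gcd(k,n-k+1)=1$, impossible for $1\le j\le k-1$. (This also excludes $n=2k-1$, so $n-k\ge k$ and the long arms are strictly longer.) Next, in the orthonormal basis $\delta_c$, $\tfrac{1}{\sqrt2}(\delta_{v_i}+\delta_{v'_i})$ the block $A|_{\Hi^{++}}$ is the Jacobi matrix of the weighted path on $n$ vertices with all edge-weights $1$ except the two edges at $c$, of weight $\sqrt2$. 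Expanding at $c$ by the deletion formula for trees gives
$$
\chi_{++}(T) = T\, p_{k-1} p_{n-k} \;-\; 2\, p_{k-2} p_{n-k} \;-\; 2\, p_{k-1} p_{n-k-1}.
$$
At a root $\lambda$ of $p_{k-1}$ this equals $-2\,p_{k-2}(\lambda)p_{n-k}(\lambda)$, nonzero because $p_{k-1}, p_{k-2}$ are coprime (a common root would propagate down the recursion to $p_0=1$) and because $p_{n-k}(\lambda)\ne 0$ by the first disjointness. A symmetric evaluation at a root of $p_{n-k}$ uses $-2\,p_{k-1}(\lambda)p_{n-k-1}(\lambda)\ne 0$. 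Hence no arm eigenvalue lies in $\Sigma(A|_{\Hi^{++}})$, giving the full disjointness.

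It remains to assemble the null vertices, where the disjointness does all the work. Since $k\ge 2$ and $n-k\ge k\ge 2$, both arm blocks are nonzero; pick $\mu \in \Sigma(P_{k-1})$ and $\nu \in \Sigma(P_{n-k})$. By disjointness each is simple as an eigenvalue of $A$, so $E_\mu$ is the $\Hi^{-+}$-eigenline for $\mu$: every eigenvector for $\mu$ vanishes at $c$ and on both long arms, whence $c$ and all long-arm vertices are null vertices for $\mu$. Symmetrically every eigenvector for $\nu$ vanishes at $c$ and on both short arms, so $c$ and all short-arm vertices are null vertices for $\nu$. Thus every vertex of $X_{n,k}$ is a null vertex for $\mu$ or for $\nu$, and Corollary~\ref{nullvectorsonly} shows $X_{n,k}$ has no dominant vertex. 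I expect the main obstacle to be exactly the disjointness of $\Sigma(A|_{\Hi^{++}})$ from the two arm spectra: without it a shared eigenvalue would contribute a fully symmetric eigenvector, which does not vanish at $c$, so $c$ could fail to be null and might even become dominant. It is precisely the two coprimality inputs --- between $p_{k-1}$ and $p_{n-k}$ (the hypothesis) and between consecutive $p_m$ (automatic) --- that rule this out.
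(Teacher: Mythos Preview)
Your proof is correct and takes a genuinely different route from the paper's.

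The paper proceeds in two steps. First it quotes a lemma of Godsil (deleting a path from a tree drops eigenvalue multiplicities by at most one) to show that all eigenvalues of $X_{n,k}$ are simple: a putative double eigenvalue would survive in both $P_{k-1}$ and $P_{n-k}$, contradicting the coprimality hypothesis. Second, with $s = 2n-1$ distinct eigenvalues in hand, it observes that for any vertex $v$ the walk matrix $W^{(v)} = (\langle A^\ell \delta_v \mid \delta_w\rangle)_{w,\ell}$ has pairs of identical rows (walks from $v$ cannot distinguish $v_j$ from $v'_j$ on the far side of the centre), so its rank is at most $n + k - 1 < 2n - 1$ or $2n - k < 2n - 1$, and $v$ is not dominant by the linear-independence criterion of Proposition~\ref{domvertfg}.

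You instead decompose $\ell^2(X_{n,k})$ under the two commuting arm-swap involutions, identify the antisymmetric blocks with $P_{k-1}$ and $P_{n-k}$, and compute the characteristic polynomial of the symmetric block explicitly via the deletion formula to establish pairwise spectral disjointness of the three blocks. This is sharper than what the paper extracts: you recover the full block structure of the spectrum, not just simplicity. You then finish via Corollary~\ref{nullvectorsonly} (null vertices) rather than the walk-matrix rank.

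What each approach buys: the paper's argument is shorter and avoids any characteristic-polynomial computation, at the cost of importing Godsil's lemma as a black box. Your argument is self-contained and more informative about the spectrum, and it makes transparent \emph{why} the coprimality hypothesis matters at each step (disjointness of $\Sigma(P_{k-1})$ and $\Sigma(P_{n-k})$, then non-vanishing of $p_{n-k}$ at roots of $p_{k-1}$ in the evaluation of $\chi_{++}$). Both proofs exploit the same symmetry, but the paper uses it only to produce equal rows in the walk matrix, while you use it to diagonalise $A$ into blocks.
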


\[\begin{tikzpicture}
\vertex[] (a) at (0,2) [label=above:$v_1$]{};
\vertex[] (b) at (1,2) [label=above:$v_2$]{};
\vertex[] (r) at (2,1) [label=above:$v_3$]{};
\vertex[] (c) at (3,2) [label=above:$v_4$]{};
\vertex[] (d) at (4,2) [label=above:$v_5$]{};
\vertex[] (e) at (5,2) [label=above:$v_6$]{};
\vertex[] (f) at (6,2) [label=above:$v_7$]{};
\vertex[] (u) at (0,0) [label=above:$v'_1$]{};
\vertex[] (v) at (1,0) [label=above:$v'_2$]{};
\vertex[] (w) at (3,0) [label=above:$v'_4$]{};
\vertex[] (x) at (4,0) [label=above:$v'_5$]{};
\vertex[] (y) at (5,0) [label=above:$v'_6$]{};
\vertex[] (z) at (6,0) [label=above:$v'_7$]{};	\path
(a) edge (b)
(b) edge (r)
(r) edge (c)
(c) edge (d)
(d) edge (e)
(e) edge (f)
(u) edge (v)
(v) edge (r)
(r) edge (w)
(w) edge (x)
(x) edge (y)
(y) edge (z) ;
\end{tikzpicture}\]
\vskip.1cm
\begin{center}
Figure 5. The graph $X_{7,3}$ of~\ref{CB2} and~\ref{CB2+}, without dominant vertices
\end{center}
\vskip.2cm

The following notion will be used in the proof of the proposition.
Let $G = (V, E)$ be a finite graph with $n = \vert V \vert$ vertices.
For a vector $\xi \in \ell^2(V)$, denote by~$\xi_w$ its $w^{{\rm th}}$ coordinate
with respect to the basis $(\delta_w)_{w \in V}$ of $\ell^2(V)$.
Define the \textbf{walk matrix} $W^{(v)}$ of $G$ at $v$ to be the matrix
with columns $\delta_v, A \delta_v, A^2 \delta_v, \hdots, A^{n-1} \delta_v$;
it is a square matrix with $n$ rows (indexed by $V$)
and $n$ columns (indexed by $\{ 0, 1, \hdots, n-1 \}$),
with entries in $\N$.
For indices $w \in V$ and $\ell \in \{ 0, 1, \hdots, n-1 \}$,
the $(w,\ell)$-entry $(A^\ell \delta_v)_w$ of $W^{(v)}$
is the number of walks of length $\ell$ in $G$ which start at $v$ and end at $w$
(see Proposition~\ref{localspdominantvertex}).
\par

The first $s$ column of $W^{(v)}$ appear in Condition~\ref{iiDEdomvertfg}
of Proposition~\ref{domvertfg}.
The matrix $W^{(v)}$ appears in the literature,
for example as a particular case of the walk matrices at subsets of $V$,
considered in \cite{LiSi--22}.
The matrix $\sum_{v \in V} W^{(v)}$ appears in various places,
including \cite[Section 14.2]{BrHa--12}, with the same name of walk matrix.

\begin{proof}[Proof of Proposition~\ref{CB2+}]
Denote by $A$ the adjacency operator of $X_{n,k}$.
\par
Claim:
\emph{all eigenvalues of $A$ are simple.}
\newline
To show this, we assume that $A$ has an eigenvalue $\lambda$
of multiplicity at least $2$, and we will obtain a contradiction.
\par

The following has been established by Godsil;
see \cite[Lemma~2]{Gods--84} or \cite[Theorem 5.1.2]{BrHa--12}.
\par

\emph{Let $X$ be a tree, $A_X$ its adjacency operator,
and $\lambda$ an eigenvalue of $A_X$ of multiplicity $m \ge 2$.
Let $P$ be a subgraph of $X$ which is a path,
and $X \smallsetminus P$ the graph obtained from $X$
by deleting the vertices of $P$ and the edges incident to them.
Then $\lambda$ is an eigenvalue of $X \smallsetminus P$
of multiplicity at least $m-1$.}
\par

We apply this to $X = X_{n,k}$.
Consider first the path $P'$ from $v_n$ to $v'_n$ (the notation is that of Example~\ref{CB2});
since $X_{n,k} \smallsetminus P'$ is the disjoint union of
two copies of the path $P_{k-1}$ with $k-1$ vertices,
$\lambda$ is an eigenvalue of $P_{k-1}$.
Consider then the path $P''$ from $v_1$ to $v'_1$;
since $X_{n,k} \smallsetminus P''$ is the disjoint union of
two copies of the path $P_{n-k}$,
$\lambda$ is an eigenvalue of $P_{n-k}$.
But the spectra of $P_{k-1}$ and $P_{n-k}$ are disjoint,
because $k$ and $n-k+1$ are coprime, and this is the announced contradiction.
\par

It follows that $A$ has simple eigenvalues only,
in other words that $A$ has $2n-1$ distinct eigenvalues.
\par

It remains to show that any vertex $v$ is not dominant.
By Proposition~\ref{domvertfg}, it suffices to show that the vectors
$\delta_v, A \delta_v, A^2 \delta_v, \hdots, A^{2n-2} \delta_v$ are linearly dependent,
i.e., that
\emph{the rank of the walk matrix $W^{(v)}$ at $v$ is strictly less than $2n-1$.}
There are two cases to consider.
\par

Suppose first that $v$ is either $v_i$ or $v'_i$ for some $i \in \{1, \hdots, k\}$.
For any $j \in \{k+1, \hdots, n\}$ and for any $\ell \in \{0, 1, \hdots, n-2\}$,
the number of walks in $X_{n,k}$ of length $\ell$
starting at $v$ and ending at $v_j$ is equal to
the number of such walks starting at $v$ and ending at $v'_j$.
Thus $\left( W^{(v)} \right)_{v_j, \ell} = \left( W^{(v)} \right)_{v'_j, \ell}$ \hskip.1cm ;
in other words, row $v_j$ and row $v'_j$ of $W^{(v)}$ coincide.
%
%
Thus the matrix $W^{(v)}$ has $n-k$ pairs of equal rows,
and consequently its rank is at most $(2n-1) - (n-k) = n+k-1$,
in particular its rank is strictly less than $2n-1$.
\par

Suppose now that $v$ is either $v_i$ or $v'_i$ for some $i \in \{k, \hdots, n\}$.
Similarly, the matrix $W^{(v)}$ has $k-1$ pairs of equal rows,
its rank is at most $(2n-1) - (k-1) = 2n-k$,
in particular its rank is strictly less than $2n-1$.
\end{proof}

\begin{exe}[\textbf{regular connected graphs without dominant vertices}]
\label{regwithoutdominant}
For $m \ge 3$, denote by $C_m$ the cycle with $m$ vertices.
Denote by $v_0, v_1, \hdots, v_{m-1}$ the vertices of $C_m$;
the edges are $\{v_{k-1}, v_k \}$ for $k \in \{ 0, 1, \hdots, m-1\}$
(with $-1 = m-1$).
Set $\zeta = \exp \left( i \frac{2}{m} \pi \right)$.
For $j = 0, 1, \hdots, m-1$,
define the vector $\xi_j \in \ell^2(C_m)$ by $\xi_j(v_k) = \zeta^{k j}$
for $k = 0, 1, \hdots, m-1$.
Then $\xi_j$ is an eigenvector of the adjacency operator of $C_m$
of eigenvalue $2 \cos \left( \frac{2j}{m} \pi \right)$.
This shows that the spectrum of $C_m$ consists of
$2$ (a simple eigenvalue), 
$2\cos \left( \frac{2j}{m} \pi \right)$ for $1 \le j \le \frac{m-1}{2}$ (eigenvalues of multiplicity~$2$),
and $-2$ when $m$ is even (a simple eigenvalue).
All vertices are dominant by Proposition~\ref{transimpliesdom}.
\par

Let $m, n$ be two coprime integers, both $\ge 3$.
The spectra of $C_m$ and $C_n$ contain $2$, and are otherwise disjoint.
The non-connected graph $C_m \sqcup C_n$ is $2$-regular.
All eigenvalues have multiplicity $2$, except $-2$ which is a simple eigenvalue
if one of $m$, $n$ is even.
Denote by $C'_{m,n}$ the complement of the graph $C_m \sqcup C_n$.
\par

The following is well-known:
\emph{
Let $\xi_* = \left( \xi_{{\rm PF}}, \xi_2, \hdots, \xi_{m+n} \right)$
be an orthonormal basis of $\ell^2(V)$ of eigenvectors of $C_m \sqcup C_n$,
where $\xi_{{\rm PF}} (v) = \frac{ 1 }{ \sqrt{m+n} }$ for any vertex $v$ of $C_m \sqcup C_n$.
Let $2, \lambda_2, \hdots, \lambda_{m+n}$ be the corresponding eigenvalues.
Then $\xi_*$ is also a basis of eigenvectors of the complement graph $C'_{m,n}$
and the corresponding eigenvalues are
$m+n-3, -1-\lambda_2, \hdots, -1-\lambda_{m+n}$.
}
See for example~\cite[Lemma 8.5.1]{GoRo--01}.
In particular, any eigenvector of $C'_{m,n}$ for an eigenvalue $\ne m+n-3$
has either all vertices of $C_m$ or all vertices of $C_n$ as null vertices.
It follows that $C'_{m,n}$ does not have any dominant vertex.
\par

The smallest of these graphs is $C'_{3,4}$.
It has $7$ vertices, it is $4$-regular, its spectrum is
$$
4, \hskip.2cm 1, \hskip.2cm 0, \hskip.2cm 0, \hskip.2cm -1, \hskip.2cm -1, \hskip.2cm -3
$$
(because the eigenvalues of $C_3 \sqcup C_4$ are $2, -1, -1, 2, 0, 0, -2$)
and it does not have any dominant vertex.

\[\begin{tikzpicture}
\vertex[] (a) at (8,0) {};
\vertex[] (b) at (11,-0.3) {};
\vertex[] (c) at (14,0) {};
\vertex[] (d) at (14,2) {};
\vertex[] (e) at (11,2.3) {};
\vertex[] (f) at (8,2) {};
\vertex[] (g) at (11,1) {}; \path
(a) edge (b)
(b) edge (c)
(c) edge (d)
(d) edge (e)
(e) edge (f)
(f) edge (a)
(a) edge (e)
(e) edge (c)
(c) edge (g)
(g) edge (a)
(f) edge (b)
(b) edge (d)
(d) edge (g)
(g) edge (f) ;
\end{tikzpicture}\]
\vskip.1cm
\begin{center}
Figure 6. The graph $C'_{3,4}$ of Example~\ref{regwithoutdominant}
\\
without dominant vertices
\end{center}
\vskip.2cm
\end{exe}

Recall that a connected finite graph of diameter $d$ has at least $d+1$ eigenvalues
\cite[Corollary~2.7]{Bigg--93}.

\begin{prop}
\label{s=d+1}
Let $G$ be a connected finite graph of diameter $d \ge 1$.
Assume that $G$ has exactly $d+1$ eigenvalues.
Let $v_0, v_d \in V$ be a pair of vertices at distance $d$ from one another.
\par
Then $v_0$ and $v_d$ are dominant vertices in $G$.
\end{prop}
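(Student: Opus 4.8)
The plan is to invoke the characterization of dominant vertices in finite graphs from Proposition~\ref{domvertfg}: a vertex $v$ is dominant if and only if the vectors $\delta_v, A\delta_v, \ldots, A^{s-1}\delta_v$ are linearly independent, where $s$ denotes the number of distinct eigenvalues of $A$. Since by hypothesis $s = d+1$, to prove that $v_0$ is dominant it suffices to show that the $d+1$ vectors $\delta_{v_0}, A\delta_{v_0}, \ldots, A^d\delta_{v_0}$ are linearly independent in $\ell^2(V)$.

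First I would fix a geodesic $v_0 = w_0, w_1, \ldots, w_d = v_d$ realizing $d(v_0,v_d)=d$, so that $d(v_0, w_j) = j$ for each $j \in \{0, \ldots, d\}$. I would then record the elementary fact (Proposition~\ref{localspdominantvertex}) that the $w$-coordinate of $A^\ell \delta_{v_0}$ equals the number of walks of length $\ell$ in $G$ from $v_0$ to $w$. The decisive observation is that this count is zero whenever $\ell < d(v_0, w)$, since one cannot reach $w$ in fewer steps than its distance from $v_0$, and is strictly positive when $\ell = d(v_0, w)$, since any geodesic is such a walk.

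Next I would form the $(d+1)\times(d+1)$ matrix $M$ whose $(j,\ell)$-entry is the $w_j$-coordinate of $A^\ell \delta_{v_0}$, with $j,\ell \in \{0,\ldots,d\}$; equivalently, $M$ is the submatrix of the walk matrix $W^{(v_0)}$ obtained by retaining the rows indexed by $w_0, \ldots, w_d$ and the first $d+1$ columns. By the observation above, $M_{j,\ell} = 0$ whenever $\ell < j$ and $M_{j,j} > 0$, so $M$ is upper triangular with strictly positive diagonal; hence $\det M = \prod_{j=0}^{d} M_{j,j} > 0$ and $M$ is invertible. Consequently the columns of $M$, and \emph{a fortiori} the full vectors $\delta_{v_0}, A\delta_{v_0}, \ldots, A^d\delta_{v_0}$ themselves, are linearly independent, so $v_0$ is dominant. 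Finally, since $v_0$ and $v_d$ play symmetric roles---the same geodesic read backwards gives $d(v_d, w_{d-\ell}) = \ell$---the identical argument applied to $v_d$ shows that $v_d$ is dominant as well.

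I do not expect a genuine obstacle: the whole argument rests on the triangular structure of walk counts along a geodesic, and the hypothesis $s = d+1$ is exactly what makes $d+1$ independent vectors sufficient. The only points requiring a little care are the direction of the triangularity (one must confirm that the forced zeros sit strictly below the diagonal, where $\ell < j$) and the strict positivity of the diagonal entries, which is precisely where connectedness and the existence of the chosen geodesic enter.
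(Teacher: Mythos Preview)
Your proposal is correct and follows essentially the same route as the paper: choose a geodesic $v_0 = w_0, w_1, \hdots, w_d = v_d$, observe that $\langle A^\ell \delta_{v_0} \sca \delta_{w_j} \rangle$ vanishes for $\ell < j$ and is strictly positive for $\ell = j$, and read off linear independence of $\delta_{v_0}, A\delta_{v_0}, \hdots, A^d\delta_{v_0}$ from the resulting triangular structure, then invoke Proposition~\ref{domvertfg}. The only difference is cosmetic: you package the argument as the invertibility of an explicit $(d+1)\times(d+1)$ submatrix of the walk matrix, while the paper leaves the triangularity implicit.
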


\begin{proof}
Choose vertices $v_1, \hdots, v_{d-1}$ such that
$v_0, v_1, \hdots, v_{d-1}, v_d$ are the vertices, in this order,
of a path connecting $v_0$ to $v_d$.
Write $\delta_j$ for $\delta_{v_j}$.
For $k \in \{0, 1, \hdots, d\}$,
$$
\langle A^k \delta_0 \sca \delta_j \rangle
\left\{\begin{aligned}
\ne 0 \hskip.2cm &\text{if} \hskip.2cm j = k
\\
= 0 \hskip.2cm &\text{if} \hskip.2cm j > k
\end{aligned} 
\right.
\hskip.5cm \text{so that} \hskip.5cm
( A^k \delta_0 )(v_j) 
\left\{\begin{aligned}
\ne 0 \hskip.2cm &\text{if} \hskip.2cm j = k
\\
= 0 \hskip.2cm &\text{if} \hskip.2cm j > k
\end{aligned} 
\right.
$$
(when $k = d$, the two lines ``$= 0$ if $j > k$'' should be ignored)
and therefore $\delta_0, A \delta_0, \hdots, A^d \delta_0$
are linearly independent.
By Proposition~\ref{domvertfg}, the vertex $v_0$ is dominant.
Similarly, $v_d$ is dominant.
\end{proof}

Proposition~\ref{s=d+1} implies for example that the end vertices are dominant
in a finite path
(as already observed in Examples~\ref{finitepaths} above),
or that all vertices are dominant in a complete graph.

\begin{exe}[\textbf{all vertices are dominant in distance-regular graphs}]
\label{DS}
A distance-regular graph is a connected finite graph $G$ of some diameter $d$ such that,
for two vertices $v, w$ of $G$ and two integers $j, h \in \{0, 1, \hdots, d\}$,
the number of vertices $u$ of $G$ such that $\vert u-v \vert = j$ and $\vert u-w \vert = h$
depends only on $\vert w - v \vert$.
For such a graph, the number of eigenvalues of the adjacency operator
is exactly $d+1$ \cite[Theorem 20.7]{Bigg--93}
and, for any vertex $v$, there exists a vertex $w$ at distance exactly $d$ from $v$.
Distance-regular graphs of diameter $2$ are called strongly regular graphs,
\par

All vertices are dominant in a finite distance-regular graph.
This follows from Proposition~\ref{s=d+1},
or alternatively from Proposition~\ref{wrg}
since distance-regular graphs are walk-regular.
\end{exe}

\begin{rem}
\label{remdr}
A graph is distance-transitive if its automorphism group
is transitive on equidistant pairs of vertices.
These graphs are clearly vertex-transitive
hence all their vertices are dominant by Proposition~\ref{transimpliesdom}.
Alternatively, these graphs are distance-regular
hence all their vertices are dominant by the previous proposition.
However, as it is well-known, 
a distance-regular graph need not be distance-transitive;
see for example \cite[Additional result 20c]{Bigg--93}.
\par

A connected finite graph with two eigenvalues is a complete graph.
A connected finite regular graph with three eigenvalues
is a strongly regular graph.
It is known that a connected finite regular graph with four eigenvalues
is walk-regular~\cite[Corollary~15.1.2]{BrHa--12},
and therefore all its vertices are dominant by Proposition~\ref{wrg}.
A connected finite regular graph with four eigenvalues
need not be distance regular \cite{vDam--95}.
\end{rem}

\begin{exe}
\label{CB3+}
For integers $k \ge 2, r \ge 1$. 
Let $T_{k,r}$ denote the $k$-ary rooted tree of depth $r$.
The tree $T_{k,1}$ is the star with one root of degree $k$ and $k$ leaves,
for $r \ge 2$, the tree $T_{k,r}$ is obtained by attaching $k$ new leaves to each leaf of $T_{k, r-1}$.
Thus $T_{k,r}$ has first one root of degree $k$, 
then $k + k^2 + \cdots + k^{r-1}$ vertices of degree $k+1$,
and finally $k^r$ leaves at distance $r$ from the root.
Altogether $T_{k,r}$ has $1 + k + \cdots + k^r = (k^{r+1} - 1)/(k-1)$ vertices. 
Eigenvalues and eigenvectors of these trees have been computed in \cite{DeRo--20}.
In particular, the results there imply:
\begin{center}
\emph{In the $k$-ary rooted tree $T_{k,r}$, with $k \ge 2$ and $r \ge 1$,
\\
a vertex is dominant if and only if it is a leaf.}
\end{center}
The particular case $T_{2,2}$ appears again ad $\widetilde{D_6}$
in Example~\ref{exCoxAffine}.
\end{exe}

\begin{exe}[\textbf{Coxeter graphs, types $A$, $D$, and $E$}]
\label{exCoxFinite}
For a Coxeter graph of type $E_8$,
the characteristic polynomial is irreducible
and all vertices are dominant;
see Example~\ref{exE8}.
The other connected Coxeter graphs of finite type, those of the types
$A_\ell$ ($\ell \ge 2$),
$D_\ell$ ($\ell \ge 4$), $E_6$, and~$E_7$,
have characteristic polynomials which are reducible over $\Q$,
as we check now.

\vskip.2cm

The Coxeter graphs of type $A$ are the finite paths.
Let $P_\ell$ denote here the characteristic polynomial of the Coxeter graph of type~$A_\ell$.
We have $P_1(T) = T$, $P_2(T) = T^2-1$, and $P_{\ell+1}(T) = TP_\ell(T) - P_{\ell-1}(T)$
for all $\ell \ge 3$; it follows that $P_\ell(T) = U_\ell(T/2)$,
where $U_\ell$ is the Chebyshev polynomial of the second kind of degree $\ell$.
Since $U_\ell$ is well-known to be reducible for all $\ell \ge 2$
(see for example \cite{CaCe--17}), the polynomial $P_\ell$ is also reducible for all $\ell \ge 2$.
\par

Dominant vertices of $P_\ell$ have been described in Example~\ref{finitepaths}.

\[\begin{tikzpicture}
\vertex[] (a) at (0,1) [label=above:$v_1$]{};
\vertex[] (b) at (1,1) [label=above:$v_2$]{};
\vertex[] (c) at (2,1) [label=above:$v_3$]{};
\vertex[] (d) at (3,1) [label=above:$v_4$]{};
\vertex[] (e) at (4,1) [label=above:$v_5$]{};
\vertex[] (f) at (5,1) [label=above:$v_6$]{};
\vertex[] (g) at (6,1) [label=above:$v_7$]{};
\vertex[fill] (x) at (7,2) [label=above:$v_8$]{};
\vertex[fill] (y) at (7,0) [label=above:$v_9$]{}; \path
(a) edge (b)
(b) edge (c)
(c) edge (d)
(d) edge (e)
(e) edge (f)
(f) edge (g)
(g) edge (x)
(g) edge (y) ;
\end{tikzpicture}\]

\[\begin{tikzpicture}
\vertex[fill] (a) at (0,1) {};
\vertex[] (b) at (1,1) {};
\vertex[fill] (c) at (2,1) {};
\vertex[] (d) at (3,1) {};
\vertex[fill] (e) at (4,1) {};
\vertex[] (f) at (5,1) {};
\vertex[fill] (g) at (6,1) {};
\vertex[] (h) at (7,1) {};
\vertex[fill] (x) at (8,2) {};
\vertex[fill] (y) at (8,0) {}; \path
(a) edge (b)
(b) edge (c)
(c) edge (d)
(d) edge (e)
(e) edge (f)
(f) edge (g)
(g) edge (h)
(h) edge (x)
(h) edge (y) ;
\end{tikzpicture}\]

\vskip.1cm
\begin{center}
Figure 7. The graph 
$D_9$ and $D_{10}$ of Example~\ref{exCoxFinite}
\\
with dominant vertices in black
\end{center} 
\vskip.2cm

For $\ell \ge 3$, the Coxeter graph of type $D_\ell$
has vertex set $V = \{v_1, v_2, \hdots, v_\ell\}$ and edge set
$\big\{ \{ v_1, v_2 \}, \hdots, \{ v_{\ell-3}, v_{\ell-2} \},
\{ v_{\ell-2}, v_{\ell-1} \}, \{ v_{\ell-2}, v_{\ell} \} \big\}$;
the graph $D_2$ has two isolated vertices, and no edge.
Let $Q_\ell$ be the characteristic polynomial of the Coxeter graph of type $D_\ell$.
We have $Q_2(T) = T^2$, $Q_3(T) = T^3 - 2T$,
and $Q_{\ell+1}(T) = TQ_\ell(T) - Q_{\ell-1}(T)$ for all $\ell \ge 3$.
It follows that $Q_\ell$ is divisible by $T^2$ for all even $\ell \ge 2$
and divisible by $T$ for all odd $\ell \ge 3$,
in particular that $Q_\ell$ is reducible for all $\ell \ge 2$.
From now on, we assume that $\ell \ge 4$.
\par

The vector $\xi_0 \in \ell^2(V)$ defined by
$$
\xi_0 (v_1) = \cdots = \xi_0(v_{\ell - 2}) = 0 ,
\hskip.5cm \xi_0(v_{\ell - 1}) = 1 ,
\hskip.2cm
\xi_0(v_\ell) = -1 ,
$$
is an eigenvector of eigenvalue $0$.
For $k \in \{1, 2, \hdots, \ell-1\}$,
the vector $\xi_k$ defined by
$$
\begin{aligned}
& \xi_k (v_j) = 2 \cos \left( \frac{ (2k-1) (\ell - j - 1) }{ 2\ell - 2 } \pi \right)
\hskip.5cm \text{for} \hskip.2cm
j \in \{1, 2, \hdots, \ell - 2\} ,
\\
& \xi_k (v_{\ell - 1}) = \xi_k (v_\ell) = 1 ,
\end{aligned}
$$
is an eigenvector of eigenvalue $2 \cos \left( \frac{ 2k-1 }{ 2\ell - 2 } \pi \right)$.
\par

If $\ell = 4$, the graph $D_4$ is the complete bipartite graph $K_{1,3}$:
it has three dominant vertices which are its three leaves,
Assume now that $\ell \ge 5$.
For $k \in \{0, 1, \hdots, \ell-1\}$,
the values $\xi_k(v_{\ell-1})$ and $\xi_k(v_\ell)$ are not zero.
By Condition~\ref{ivDEproplinalgsa} of Proposition~\ref{proplinalgsa},
the vertices $v_{\ell-1}$ and $v_\ell$ are dominant.
\par

Consider the case of $\ell$ odd and at least $5$.
All eigenvalues are simple.
For $j \in \{1, \hdots, \ell-2\}$,
the orthogonal projection of $\delta_{v_j}$ onto the eigenspace of eigenvalue~$0$
is zero, and it follows that $v_j$ is not dominant.
\par

Consider now the case of $\ell$ even and at least $6$.
The eigenvalue $0$ has multiplicity~$2$,
it has eigenvectors $\xi_0$ and $\xi_{\ell/2}$,
and the other eigenvalues are simple.
For $j$ odd in $\{1, 3, \hdots, \ell-3\}$ and $k \in \{1, \hdots, \ell-1\}$,
the scalar product
$\langle \delta_{v_j} \sca \xi_k \rangle = 2 \cos \left( \frac{ (2k-1) (\ell - j - 1) }{ 2\ell - 2 } \pi \right)$
is not $0$; indeed, if it were $0$, we would have
$\frac{ (2k-1) (\ell - j - 1) }{ 2\ell - 2 } \pi = \frac{\pi}{2} + n\pi$ for some $n \in \Z$,
namely $(2k-1) (\ell - j - 1) = (\ell - 1)(2n+1)$,
but this is impossible because $\ell - j - 1$ is even and $(\ell - 1)(2n + 1)$ is odd.
It follows that $v_1, v_3, \hdots, v_{\ell-3}$ are dominant.
For $j$ even in $\{2, 4, \hdots, \ell-2 \}$ and $k = \ell/2$,
the scalar products
$\langle \delta_{v_j} \sca \xi_{\ell/2} \rangle = 2 \cos \left( \frac{\ell - j - 1}{ 2 } \pi \right)$
and $\langle \delta_{v_j} \sca \xi_0 \rangle$ are both $0$;
it follows that the orthogonal projection of $\delta_{v_j}$
onto the eigenspace of eigenvalue $0$ is zero, and therefore that $v_j$ is not dominant.

\[\begin{tikzpicture}
\vertex[fill] (a) at (0,1) {};
\vertex[fill] (b) at (1,1) {};
\vertex[] (r) at (2,1) {};
\vertex[] (h) at (2,2) {};
\vertex[fill] (x) at (3,1) {};
\vertex[fill] (y) at (4,1) {}; \path
(a) edge (b)
(b) edge (r)
(r) edge (x)
(r) edge (h)
(x) edge (y) ;
\end{tikzpicture}\]

\[\begin{tikzpicture}
\vertex[] (a) at (0,1) {};
\vertex[] (b) at (1,1) {};
\vertex[] (r) at (2,1) {};
\vertex[fill] (h) at (2,2) {};
\vertex[fill] (x) at (3,1) {};
\vertex[] (y) at (4,1) {};
\vertex[fill] (z) at (5,1) {}; \path
(a) edge (b)
(b) edge (r)
(r) edge (x)
(r) edge (h)
(x) edge (y)
(y) edge (z) ;
\end{tikzpicture}\]
\vskip.1cm
\begin{center}
Figure 8. Graph $E_6$ and $E_7$ of Example~\ref{exCoxFinite}
\\
with dominant vertices in black
\end{center}
\vskip.2cm

Direct computations show that the characteristic polynomials of $E_6$ and $E_7$
$$
\begin{aligned}
P_{E6}(T) &= T^6 - 5T^4 + 5T^2 - 1 = (T-1)(T+1)(T^4- 4T^2 + 1) ,
\\
P_{E7}(T) &= T^7 - 6T^5 - 9T^3 + 3T = T(T^6 - 6T^4 -9T^2 + 3) ,
\end{aligned}
$$
are reducible over $\Q$.
Explicit computations again show that
all eigenvalues are simple,
that the graph $E_6$ has four dominant vertices
(not the vertex of degree $3$, and not the leave adjacent to it),
and that the graph $E_7$ has three dominant vertices
(the leaf adjacent to the vertex of degree $3$,
the leaf at distance $3$ of this vertex of degree $3$, 
and the vertex at distance $2$ of this last leaf).
See Figure 8.
\par

We have already noted in Example~\ref{exE8}
that all vertices are dominant in the graph $E_8$.
\end{exe}

\begin{exe}
[\textbf{Coxeter graphs, types $\widetilde A$, $\widetilde D$, and $\widetilde E$}]
\label{exCoxAffine}
These are more precisely the graphs of type
$\widetilde{A_\ell}$ $(\ell \ge 2)$, 
$\widetilde{D_\ell}$ $(\ell \ge 4)$,
and $\widetilde{E_\ell}$ $(\ell = 6, 7, 8)$;
the graph with subscript $\ell$ has $\ell+1$ vertices.
These graphs are the finite connected graphs with
maximal eigenvalue exactly $2$ \cite[Section~3.1.1]{BrHa--12}.
It follows that the characteristic polynomials of these graphs
are reducible, divisible by $T-2$.
\par

For $\ell \ge 2$, the Coxeter graph of type $\widetilde{A_\ell}$ is the cycle $C_{\ell+1}$
with $\ell+1$ vertices.
In Example~\ref{regwithoutdominant},
we have already written the list of its eigenvalues.
We have also observed that all vertices are dominant (Proposition~\ref{transimpliesdom}).
\par

The Coxeter graph of type $\widetilde{D_4}$ is the complete bipartite graph $K_{1,4}$
of Example~\ref{ComBipKmn};
it has four dominant vertices, which are its four leaves.
For $\ell \ge 5$, 
the graph of type $\widetilde{D_\ell}$
has vertex set $V_\ell = \{v_0, v_1, \hdots, v_\ell\}$
and edge set the edge set of $D_\ell$ plus the edge $\{v_0, v_2\}$.
The vectors $\xi'_0$ and $\xi''_0$ defined by
$$
\begin{aligned}
\xi'_0(v_0) = 1, \hskip.5cm \xi'_0(v_1) = -1, \hskip.5cm
& \xi'_0(v_j) = 0 \hskip.5cm \text{for} \hskip.2cm 2 \le j \le \ell ,
\\
\xi''_0(v_j) = 0 \hskip.5cm \text{for} \hskip.2cm 0 \le j \le \ell - 2 \hskip.5cm
& \xi''_0(v_{\ell-1}) = 1, \hskip.5cm \xi''_0(v_\ell) = -1 ,
\end{aligned}
$$
are eigenvectors of eigenvalue $0$.
For $k \in \{0, 1, \hdots, \ell-1 \}$, the vector $\xi_k$ defined by
$$
\begin{aligned}
& \xi_k(v_0) = \xi_k(v_1) = 1, \hskip.5cm \xi_k(v_{\ell-1}) = \xi_k(v_\ell) = (-1)^k ,
\\
& \xi_k(v_j) = 2 \cos \Big( \frac{ (j-1) k }{\ell-2 } \pi \Big)
\hskip.5cm \text{for} \hskip.2cm 2 \le j \le \ell-2 ,
\end{aligned}
$$
is an eigenvector of eigenvalue $\lambda_k = 2 \cos \frac{ k \pi }{\ell-2 }$.
Note that $A \xi_0 = 2 \xi_0$, so that $\xi_0$ is the Perron--Frobenius eigenvector
(the value of $\xi_0$ is $1$ on leaves and is two on other vertices,
in particular it is positive on all vertices);
moreover $\xi_{\ell-1}(v_j) = \pm \xi_0(v_j)$ for all $j$
with signs such that $A \xi_{\ell-1} = -2\xi_{\ell-1}$.
All eigenvectors above take non-zero values on leaves,
so that the four leaves of $\widetilde{D_\ell}$ are always dominant.
To decide whether there are other dominant vertices,
we distinguishes two cases, according to the parity of $\ell$.
\par

If $\ell$ is odd (hence $\vert V_\ell \vert$ is even), $\lambda_k \ne 0$ for $k \in \{0, 1, \hdots, \ell-1\}$,
hence the multiplicity of the eigenvalue $0$ is two.
All eigenvectors of eigenvalue $0$ (such as $\xi'_0$ and $\xi''_0$)
vanish on $v_j$ for $2 \le j \le \ell-2$,
and it follows that the latter vertices are not dominant.
Thus $\widetilde{D_\ell}$ has exactly four dominant vectors, which are its four leaves.
\par

If $\ell$ is even, the multiplicity of the eigenvalue $0$ is three,
and the corresponding eigenspace is spanned by $\xi'_0, \xi''_0$, and $\xi_{(\ell-2)/2}$.
For some values of $\ell$, the graph $\widetilde{D_\ell}$ has more dominant vertices than its leaves,
and for others it has not.
For example, the graph $\widetilde{D_6}$ has four dominant vertices,
and the graph $\widetilde{D_8}$ has six,
as shown in black on Figure 9.

\[\begin{tikzpicture}
\vertex[fill] (u) at (-1,2) [label=above:$v_0$]{};
\vertex[fill] (v) at (-1,0) [label=above:$v_1$]{};
\vertex[] (a) at (0,1) [label=above:$v_2$]{};
\vertex[] (b) at (1,1) [label=above:$v_3$]{};
\vertex[] (g) at (2,1) [label=above:$v_4$]{};
\vertex[fill] (x) at (3,2) [label=above:$v_5$]{};
\vertex[fill] (y) at (3,0) [label=above:$v_6$]{}; \path
(u) edge (a)
(v) edge (a)
(a) edge (b)
(b) edge (g)
(g) edge (x)
(g) edge (y) ;
\end{tikzpicture}\]

\[\begin{tikzpicture}
\vertex[fill] (u) at (-1,2) {};
\vertex[fill] (v) at (-1,0) {};
\vertex[] (a) at (0,1) {};
\vertex[fill] (b) at (1,1) {};
\vertex[] (c) at (2,1) {};
\vertex[fill] (d) at (3,1) {};
\vertex[] (e) at (4,1) {};
\vertex[fill] (x) at (5,2) {};
\vertex[fill] (y) at (5,0) {}; \path
(u) edge (a)
(v) edge (a)
(a) edge (b)
(b) edge (c)
(c) edge (d)
(d) edge (e)
(e) edge (x)
(e) edge (y) ;
\end{tikzpicture}\]

\vskip.1cm
\begin{center}
Figure 9. The graph 
$\widetilde{D_{6}}$ and $\widetilde{D_{8}}$ of Example~\ref{exCoxAffine}
\\
with dominant vertices in black
\end{center} 
\vskip.2cm

The graphs $\widetilde{E_6}$, $\widetilde{E_7}$, and $\widetilde{E_8}$,
have respectively three, two, and one,
dominant vertices.
The necessary computations are tedious but straightforward,
and we do not give the details,
Dominant vertices are shown in Figure 10.

\[\begin{tikzpicture}
\vertex[fill] (b) at (1,1) {};
\vertex[] (c) at (2,1) {};
\vertex[] (r) at (3,1) {};
\vertex[] (h) at (3,2) {};
\vertex[fill] (k) at (3,3) {};
\vertex[] (x) at (4,1) {};
\vertex[fill] (y) at (5,1) {}; \path
(b) edge (c)
(c) edge (r)
(r) edge (x)
(r) edge (h)
(h) edge (k)
(x) edge (y) ;
\end{tikzpicture}\]

\[\begin{tikzpicture}
\vertex[fill] (a) at (0,1) {};
\vertex[] (b) at (1,1) {};
\vertex[] (c) at (2,1) {};
\vertex[] (r) at (3,1) {};
\vertex[] (h) at (3,2) {};
\vertex[] (x) at (4,1) {};
\vertex[] (y) at (5,1) {};
\vertex[fill] (z) at (6,1) {}; \path
(a) edge (b)
(b) edge (c)
(c) edge (r)
(r) edge (x)
(r) edge (h)
(x) edge (y)
(y) edge (z) ;
\end{tikzpicture}\]

\[\begin{tikzpicture}
\vertex[] (a) at (0,1) {};
\vertex[] (b) at (1,1) {};
\vertex[] (r) at (2,1) {};
\vertex[] (h) at (2,2) {};
\vertex[] (x) at (3,1) {};
\vertex[] (y) at (4,1) {};
\vertex[] (z) at (5,1) {};
\vertex[] (t) at (6,1) {};
\vertex[fill] (u) at (7,1)[label=above:$v$] {};	\path
(a) edge (b)
(b) edge (r)
(r) edge (x)
(r) edge (h)
(x) edge (y)
(y) edge (z)
(z) edge (t)
(t) edge (u) ;
\end{tikzpicture}\]

\vskip.1cm
\begin{center}
Figure 10. The graph 
$\widetilde{E_6}$, $\widetilde{E_7}$, and $\widetilde{E_8}$ of Example~\ref{exCoxAffine}
\\
with dominant vertices in black
\end{center} 
\vskip.2cm

The graph $\widetilde{E_8}$
has a remarkable property:
it has a unique vertex which is dominant;
this vertex is the leaf $v$ at distance $5$ from the vertex of degree~$3$.
\end{exe} 

\section{On a perturbation of the free Jacobi matrix}
\label{S+EofJc}

The object of this section is to determine the spectrum
and the eigenvalues of the operator $J_a$ defined below,
and to show that all basis vectors $\delta_n$ are cyclic for~$J_a$.
This will be used for the proof of Theorem~\ref{exStars}.
The results are known, see in particular \cite{LeNy--15} and \cite{Yafa--17}.

\begin{defn}
\label{defJa}
Consider the Hilbert space $\ell^2(\N)$,
its standard orthonormal basis $\delta_* = (\delta_n)_{n \ge 0}$,
a positive number $a > 0$, and the $\N$-by-$\N$ matrix
$$
J_a = \begin{pmatrix}
0 & a & 0 & 0 & 0 & \cdots
\\
a & 0 & 1 & 0 & 0 & \cdots
\\
0 & 1 & 0 & 1 & 0 & \cdots
\\
0 & 0 & 1 & 0 & 1 & \cdots
\\
0 & 0 & 0 & 1 & 0 & \cdots
\\
\vdots & \vdots & \vdots & \vdots & \vdots & \ddots
\end{pmatrix} .
\leqno{(Jac_a})
$$
This is the matrix with respect to the basis $\delta_*$
of a bounded self-adjoint operator on $\ell^2(\N)$
that we denote again by $J_a$ and which is called a \textbf{Jacobi operator}.
We may view $J_a$ as a perturbation of the matrix $J_1$,
often called the \textbf{free Jacobi matrix},
for example in \cite[Chapter 1]{Simo--11}.
\end{defn}

Let $S$ be the operator on $\ell^2(\N)$ defined by
$$
S (\xi_0, \xi_1, \xi_2, \xi_3, \xi_4, \hdots)
= (a\xi_1, \xi_2, \xi_3, \xi_4, \hdots) .
$$
(it is an example of weighted shift).
Its adjoint is given by
$$
S^* (\xi_0, \xi_1, \xi_2, \xi_3, \xi_4, \hdots)
= (0, a\xi_0, \xi_1, \xi_2, \xi_3, \hdots) .
$$
We denote by $P_0$ the orthogonal projection of $\ell^2(\N)$ onto the line generated by
$(1, 0, 0, 0, 0, \hdots)$ and by $1$ the identity operator on $\ell^2(\N)$.
We denote by $D_0(1)$ the closed disc $\{ z \in \C \sca \vert z \vert \le 1 \}$
of radius $1$ centred at the origin of $\C$.
\par

The following lemma and its proof are inspired by Section 2 in \cite{Szwa--88}.

\begin{lem}
\label{LemmaSpecwshift}
The notation is as above.
\begin{enumerate}[label=(\arabic*)]
\item\label{1DELemmaSpecwshift}
For $n \ge 1$, we have $\Vert S^n \Vert = \max \{ a, 1 \}$.
\item\label{2DELemmaSpecwshift}
The spectral radius of $S$ is $1$.
\item\label{3DELemmaSpecwshift}
Any $z \in \C$ such that $\vert z \vert < 1$ is an eigenvalue of $S$.
\item\label{4DELemmaSpecwshift}
The spectrum of $S$ is the closed disc $D_0(1)$.
\item\label{5DELemmaSpecwshift}
For $z \in \C$ such that $\vert z \vert \ge 1$, the number $z$ is not an eigenvalue of $S$
(hence the inequality of~\ref{3DELemmaSpecwshift} is sharp).
\item\label{6DELemmaSpecwshift}
For $z \in \C$ such that $\vert z \vert = 1$, the operator $z-S$ is not surjective.
\item\label{7DELemmaSpecwshift}
For $z \in \C$ such that $0 < \vert z \vert < 1$, the operators $1 - zS$ and $1 - zS^*$
are invertible.
\end{enumerate}
\end{lem}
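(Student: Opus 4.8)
The plan is to treat $S$ as a backward weighted shift: on the standard basis it acts by $S\delta_0 = 0$, $S\delta_1 = a\delta_1$ --- more precisely $S\delta_1 = a\delta_0$ --- and $S\delta_n = \delta_{n-1}$ for $n \ge 2$. Almost every assertion will follow once the norms of the powers $S^n$ are known and the eigenvalue equation $S\xi = z\xi$ has been solved explicitly; the remaining parts are then extracted from elementary spectral theory (the spectral radius formula, closedness of the spectrum, and the open mapping theorem).

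For~\ref{1DELemmaSpecwshift} I would compute $S^n$ directly. Iterating the action shows that $S^n$ annihilates $\delta_0, \hdots, \delta_{n-1}$, sends $\delta_n$ to $a\delta_0$, and sends $\delta_k$ to $\delta_{k-n}$ for $k \ge n+1$; these image vectors are pairwise orthogonal. Hence for $\xi = \sum_k c_k\delta_k$ one gets $\Vert S^n\xi\Vert^2 = a^2\vert c_n\vert^2 + \sum_{k\ge n+1}\vert c_k\vert^2$ while $\Vert \xi\Vert^2 \ge \vert c_n\vert^2 + \sum_{k \ge n+1}\vert c_k\vert^2$, and maximizing the ratio $\Vert S^n\xi\Vert^2/\Vert\xi\Vert^2$ gives $\Vert S^n\Vert = \max\{1,a\}$. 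Part~\ref{2DELemmaSpecwshift} is then immediate from the spectral radius formula $\rho(S) = \lim_n \Vert S^n\Vert^{1/n} = \lim_n(\max\{1,a\})^{1/n} = 1$, recalled in the proof of Proposition~\ref{1stqual} (note that $S$ is \emph{not} self-adjoint, so $\rho(S) = 1$ genuinely differs from $\Vert S\Vert$ when $a > 1$).

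For~\ref{3DELemmaSpecwshift} and~\ref{5DELemmaSpecwshift} I would solve $S\xi = z\xi$ coordinatewise. The equations read $a\xi_1 = z\xi_0$ and $\xi_{n+1} = z\xi_n$ for $n \ge 1$, forcing $\xi_n = z^{n-1}\xi_1$ for $n \ge 1$ and $\xi_0 = a\xi_1/z$ when $z \ne 0$ (while $z=0$ yields the eigenvector $\delta_0$). The vector $\xi = (a/z, 1, z, z^2, \hdots)$ lies in $\ell^2(\N)$ precisely when $\vert z\vert < 1$, which proves~\ref{3DELemmaSpecwshift}; when $\vert z\vert \ge 1$ the tail $\sum_{n\ge 1}\vert z\vert^{2(n-1)}$ diverges, so no nonzero $\ell^2$ solution exists, which is~\ref{5DELemmaSpecwshift}. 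Part~\ref{4DELemmaSpecwshift} then follows by combining these: $\rho(S) = 1$ gives $\Sigma(S) \subseteq D_0(1)$, while~\ref{3DELemmaSpecwshift} shows the open disc lies in $\Sigma(S)$; since the spectrum is closed, $\Sigma(S) = D_0(1)$.

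The last two parts are formal. For~\ref{6DELemmaSpecwshift}, any $z$ with $\vert z\vert = 1$ lies on the boundary of $D_0(1) = \Sigma(S)$, so $z - S$ is not invertible; by~\ref{5DELemmaSpecwshift} it is injective, and an injective non-invertible bounded operator cannot be surjective, by the open mapping theorem. For~\ref{7DELemmaSpecwshift}, write $1 - zS = -z(S - z^{-1})$ with $\vert z^{-1}\vert > 1$; since $z^{-1} \notin D_0(1) = \Sigma(S)$, the operator $1-zS$ is invertible, and the same argument applied to $S^*$ (whose spectrum is the complex conjugate of $\Sigma(S)$, hence again $D_0(1)$) handles $1 - zS^*$. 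I expect the only delicate step to be the norm computation in~\ref{1DELemmaSpecwshift}: one must track the single weight $a$ as it passes through the first coordinate, since this is exactly what makes $\Vert S^n\Vert$ depend on $\max\{1,a\}$; everything else is routine once~\ref{1DELemmaSpecwshift},~\ref{3DELemmaSpecwshift}, and~\ref{5DELemmaSpecwshift} are in place.
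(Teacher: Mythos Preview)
Your proposal is correct and follows essentially the same route as the paper: the norm computation in~\ref{1DELemmaSpecwshift}, the spectral radius formula for~\ref{2DELemmaSpecwshift}, the explicit eigenvector $(a, z, z^2, \hdots)$ (your normalization differs by a scalar) for~\ref{3DELemmaSpecwshift} and~\ref{5DELemmaSpecwshift}, and the same spectral-theory deductions for the rest. The only cosmetic difference is in~\ref{7DELemmaSpecwshift}, where the paper handles $1 - zS^*$ via the identity $1 - zS^* = (1 - \overline{z}S)^*$ rather than invoking $\Sigma(S^*) = \overline{\Sigma(S)}$; both arguments are equally short.
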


\begin{proof}
\ref{1DELemmaSpecwshift}
On the one hand, for $n \ge 1$, we have $S^n \delta_n = a \delta_0$.
Since $\Vert \delta_n \Vert = \Vert \delta_0 \Vert = 1$,
this implies
$$
a = \langle S^n \delta_n \sca \delta_0 \rangle \le \Vert S^n \Vert
\hskip.5cm \text{and} \hskip.5cm
1 = \langle S^n \delta_{n+1} \sca \delta_1 \rangle \le \Vert S^n \Vert ,
$$
hence $\Vert S^n \Vert \ge \max \{ a, 1 \}$.
On the other hand, for all $\xi$ in $\ell^2(\N)$, we have
$$
\begin{aligned}
\Vert S^n \xi \Vert^2
&= \vert a \xi_n \vert^2 + \vert \xi_{n+1} \vert^2 + \vert \xi_{n+2} \vert^2 + \cdots
= a^2 \vert \xi_n \vert^2 + \vert \xi_{n+1} \vert^2 + \vert \xi_{n+2} \vert^2 + \cdots
\\
&\le \big( \max \{ a, 1 \} \big)^2 \sum_{j=n}^\infty \vert \xi_j \vert^2 
\le \big( \max \{ a, 1 \} \big)^2 \Vert \xi \Vert^2 ,
\end{aligned}
$$
hence $\Vert S^n \Vert \le \max \{ a, 1 \}$.

\vskip.2cm

\ref{2DELemmaSpecwshift}
This follows from~\ref{1DELemmaSpecwshift} because,
as for any bounded linear operator on a Banach space,
the spectral radius of $S$
is given by $\lim_{n \to \infty} \Vert S^n \Vert^{1/n}$.

\vskip.2cm

\ref{3DELemmaSpecwshift}
Set $\xi = (a, z, z^2, z^3, \hdots)$.
Then $\xi \in \ell^2(\N)$ and $\xi \ne 0$,
because $0 < \Vert \xi \Vert^2 = a^2 + \sum_{n=1}^\infty \vert z \vert^{2n} < \infty$.
It is straightforward to check that $S \xi = z \xi$.

\vskip.2cm

\ref{4DELemmaSpecwshift}
By~\ref{2DELemmaSpecwshift}, the spectrum of $S$ is contained in $D_0(1)$.
By~\ref{3DELemmaSpecwshift}, it contains the open disc of radius $1$ around the origin,
and therefore also its closure $D_0(1)$.

\vskip.2cm

\ref{5DELemmaSpecwshift}
Let $z \in \C$, $\vert z \vert \ge 1$, and
$\xi \in \ell^2(\N)$ be such that $S \xi = z \xi$, namely such that
$$
S\xi = (a\xi_1, \xi_2, \hdots, \xi_{n+1}. \hdots)
= z\xi = (z\xi_0, z\xi_1, \hdots, z\xi_n, \hdots) .
$$
Then $\xi_1 = \frac{z}{a} \xi_0$ and $\xi_{n+1} = z \xi_n$ for $n \ge 1$,
hence $\vert \xi_1 \vert \ge \frac{1}{a} \vert \xi_0 \vert$
and $\vert \xi_{n+1} \vert \ge \vert \xi_n \vert$ for $n \ge 1$.
Since $\sum_{n=0}^\infty \vert \xi_n \vert^2 < \infty$, this implies that $\xi_n = 0$ for all $n \ge 0$,
i.e., $\xi = 0$, hence $z$ is not an eigenvalue of $S$.

\vskip.2cm

\ref{6DELemmaSpecwshift}
When $\vert z \vert = 1$, the operator $z-S$ is injective by~\ref{5DELemmaSpecwshift}
and non invertible by~\ref{4DELemmaSpecwshift}, hence it is not surjective.

\vskip.2cm

\ref{7DELemmaSpecwshift}
If $0 < \vert z \vert < 1$,
then $z^{-1}$ is not in the spectrum of $S$ by~\ref{4DELemmaSpecwshift},
i.e., $S - z^{-1}$ is invertible, and therefore $1 - zS = -z(S-z^{-1})$ is invertible.
Similarly $(\overline{z})^{-1}$ is not in the spectrum of $S$
and $1 - \overline{z}S$ is invertible,
so that $1 - zS^* = (1 - \overline{z}S)^*$ is invertible.
\end{proof}

\begin{lem}
\label{lemmaJ}
The notation is as above.
\begin{enumerate}[label=(\arabic*)]
\item\label{1DElemmaJ}
We have
$J_a = S + S^*$, $SP_0 = 0$, and $SS^* = 1 + (a^2 - 1)P_0$.
\item\label{2DElemmaJ}
For all $z \in \C^*$, we have
$$
\frac{1}{z} (1 - zS) \hskip.1cm \big[ (1 - z^2(a^2 - 1)) P_0 + 1 - P_0 \big] \hskip.1cm (1 - zS^*)
= \frac{1}{z} + z - J_a .
$$
\item\label{3DElemmaJ}
For $z \in \C^*$ such that $\vert z \vert = 1$,
the operator $\frac{1}{z} + z - J_a$ is not invertible.
\end{enumerate}
We assume moreover that $a \le \sqrt{2}$.
\begin{enumerate}[label=(\arabic*)]
\addtocounter{enumi}{3}
\item\label{4DElemmaJ}
For $z \in \C^*$, the operator $\frac{1}{z} + z - J_a$ is invertible
if and only if $\vert z \vert \ne 1$.
\item\label{5DElemmaJ}
The spectrum of $J_a$ is $\mathopen[ -2, 2 \mathclose]$
and its norm is $\Vert J_a \Vert = 2$.
\end{enumerate}
\end{lem}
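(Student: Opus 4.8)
The plan is to establish the five items in order, with the factorization of item~\ref{2DElemmaJ} serving as the engine for items~\ref{3DElemmaJ}--\ref{5DElemmaJ}. Items~\ref{1DElemmaJ} and~\ref{2DElemmaJ} are pure computation. For~\ref{1DElemmaJ}, I would evaluate $(S+S^*)\xi$ on a general $\xi=(\xi_0,\xi_1,\dots)$ and match it row by row against the matrix defining $J_a$ to get $J_a=S+S^*$; since $P_0\xi=(\xi_0,0,0,\dots)$ and $S$ reads the second coordinate first, $SP_0=0$; and $SS^*\xi=(a^2\xi_0,\xi_1,\xi_2,\dots)=\bigl(1+(a^2-1)P_0\bigr)\xi$. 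Taking the adjoint of $SP_0=0$ (with $P_0^*=P_0$) also records $P_0S^*=0$, which I need for the next item. For~\ref{2DElemmaJ} I would first rewrite the middle factor as $B:=(1-z^2(a^2-1))P_0+1-P_0=1-z^2(a^2-1)P_0$ and expand $(1-zS)B(1-zS^*)$; the relations $SP_0=0$ and $P_0S^*=0$ annihilate the mixed shift--projection terms, and substituting $SS^*=1+(a^2-1)P_0$ makes the surviving $P_0$-terms cancel, leaving $1+z^2-z(S+S^*)=1+z^2-zJ_a$. Dividing by $z$ yields the stated identity; the cancellations are the substance, so I would carry them out explicitly.

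Item~\ref{3DElemmaJ} is where the factorization is decisive. For $\vert z\vert=1$ the range of $\frac1z(1-zS)B(1-zS^*)$ is contained in the range of its leftmost factor $1-zS=-z(S-z^{-1})$. Since $\vert z^{-1}\vert=1$, Lemma~\ref{LemmaSpecwshift}\ref{6DELemmaSpecwshift} says $z^{-1}-S$ is not surjective, hence $1-zS$ is not surjective, hence the whole product is not surjective. Therefore $\frac1z+z-J_a$ is not surjective, so not invertible.

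For item~\ref{4DElemmaJ}, the implication $\vert z\vert=1\Rightarrow$ non-invertible is item~\ref{3DElemmaJ}, so it remains to prove invertibility when $\vert z\vert\ne1$ by showing all three factors are invertible. For $0<\vert z\vert<1$, Lemma~\ref{LemmaSpecwshift}\ref{7DELemmaSpecwshift} gives invertibility of $1-zS$ and $1-zS^*$, while $B$ acts as the scalar $1-z^2(a^2-1)$ on $\C\delta_0$ and as the identity on $\delta_0^\perp$, so $B$ is invertible unless $z^2(a^2-1)=1$. Here the hypothesis $a\le\sqrt2$ enters: when $a>1$ the solutions of $z^2(a^2-1)=1$ are real with $\vert z\vert=(a^2-1)^{-1/2}\ge1$, when $a<1$ they are purely imaginary with $\vert z\vert=(1-a^2)^{-1/2}>1$, and when $a=1$ there are none; in every case $\vert z\vert\ge1$, so for $\vert z\vert<1$ the factor $B$ is invertible and the product is invertible. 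For $\vert z\vert>1$ I would use the symmetry $\frac1z+z-J_a=\frac1w+w-J_a$ with $w=1/z$ to reduce to the settled case $0<\vert w\vert<1$.

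Item~\ref{5DElemmaJ} then follows: writing $z=e^{i\theta}$ shows that $z\mapsto z+z^{-1}$ maps the unit circle onto $[-2,2]$, and every real $\lambda$ with $\vert\lambda\vert>2$ equals $z+z^{-1}$ for some real $z$ with $\vert z\vert\ne1$ (the roots of $z^2-\lambda z+1$ are reciprocal and $\ne\pm1$). Combining this with item~\ref{4DElemmaJ} and $\Sigma(J_a)\subset\R$ gives that $\lambda-J_a$ is non-invertible exactly for $\lambda\in[-2,2]$, so $\Sigma(J_a)=[-2,2]$; self-adjointness yields $\Vert J_a\Vert=\rho(J_a)=2$. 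The main obstacle is the invertibility of the middle factor $B$ in item~\ref{4DElemmaJ}: this is the unique place the hypothesis $a\le\sqrt2$ is used, and it is sharp, since for $a>\sqrt2$ the equation $z^2(a^2-1)=1$ has a real root with $\vert z\vert<1$, producing an eigenvalue of $J_a$ outside $[-2,2]$.
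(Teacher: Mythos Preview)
Your proposal is correct and follows essentially the same route as the paper: the direct verification of~\ref{1DElemmaJ}, the expansion in~\ref{2DElemmaJ} using $SP_0=0$, $P_0S^*=0$ and $SS^*=1+(a^2-1)P_0$, the non-surjectivity of the left factor for~\ref{3DElemmaJ} via Lemma~\ref{LemmaSpecwshift}\ref{6DELemmaSpecwshift}, and the invertibility of all three factors for~\ref{4DElemmaJ} (reducing to $\vert z\vert<1$ by the symmetry $z\leftrightarrow 1/z$) leading to~\ref{5DElemmaJ}. The only cosmetic difference is that where you argue by cases on $a$ that the roots of $z^2(a^2-1)=1$ satisfy $\vert z\vert\ge 1$, the paper uses the equivalent one-line bound $\vert z^2(a^2-1)\vert<1$ from $\vert a^2-1\vert\le 1$.
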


\begin{proof}
The equalities of Claim \ref{1DElemmaJ} are straightforward.
Using them, we show~\ref{2DElemmaJ}:
$$
\begin{aligned}
&\frac{1}{z} (1 - zS) \big[ (1 - z^2(a^2 - 1)) P_0 + 1 - P_0 \big] (1 - zS^*)
\\
& = \hskip.1cm
\frac{1}{z} (1 - zS) \big[ - z^2(a^2 - 1) P_0 + 1 \big] (1 - zS^*)
\\
& = \hskip.1cm
\frac{1}{z} \big[ -z^2 (a^2 - 1) P_0 + 1 + z^3 (a^2 - 1) S P_0 - zS \big] (1 - zS^*)
\\
& = \hskip.1cm
\frac{1}{z} \big[ -z^2 (a^2 - 1) P_0 + 1 - zS \big] (1 - zS^*) 
\\
& = \hskip.1cm
\frac{1}{z} \big[ -z^2 (a^2 - 1) P_0 + 1 - zS + z^3 (a^2 - 1) P_0 S^* - zS^* + z^2 S S^* \big]
\\
& = \hskip.1cm
\frac{1}{z} \big[ -z^2 (a^2 - 1) P_0 + 1 - z J_a + z^2 \big( 1 + (a^2 - 1)P_0 \big) \big]
\\
& = \hskip.1cm
\frac{1}{z} \big[ 1 - z J_a + z^2 ) \big]
\hskip.1cm = \hskip.1cm
\frac{1}{z} + z - J_a .
\end{aligned}
$$

\vskip.2cm

For~\ref{3DElemmaJ}, let $z \in \C^*$ be such that $\vert z \vert = 1$.
The operator $1 - zS = z ( z^{-1} - S)$
is not surjective by~\ref{6DELemmaSpecwshift} of Lemma~\ref{LemmaSpecwshift},
and~\ref{2DElemmaJ} implies that $\frac{1}{z} + z - J_0$
is not surjective, hence in particular is not invertible.

\vskip.2cm

\ref{4DElemmaJ}
When $a \le \sqrt 2$, it remains to show that,
for $z \in \C^*$ such that $\vert z \vert \ne 1$,
the operator $\frac{1}{z} + z - J_a$ is invertible.
Since $z + \frac{1}{z}$ is invariant by the change
$z \mapsto \frac{1}{z}$,
we may assume that $\vert z \vert < 1$.
The operators $1-zS$ and $1 - zS^*$ are invertible by~\ref{7DELemmaSpecwshift}
of Lemma~\ref{LemmaSpecwshift}.
Since $a \le \sqrt{2}$, we have $\vert a^2 - 1 \vert \le 1$,
hence $\vert z^2(a^2 - 1) \vert < 1$ and $1 - z^2 (a^2 - 1) \ne 0$,
so that the operator $(1 - z^2(a^2 - 1)) P_0 + 1 - P_0$ is invertible.
It follows from~\ref{2DElemmaJ} that the operator $\frac{1}{z} + z - J_a$ is invertible.

\vskip.2cm

\ref{5DElemmaJ}
For $z \in \C^*$, we have $\vert z \vert = 1$
if and only if $\frac{1}{z} + z \in \mathopen[ -2, 2 \mathclose]$.
Thus $\lambda - J_a$ is invertible if and only if
$\lambda \notin \mathopen[ -2, 2 \mathclose]$.
Claim~\ref{5DElemmaJ} is proved.
\end{proof}

\begin{lem}
\label{LemmaVp}
Let $a > 0$, let $J_a$ be as above, let $\lambda \in \R$,
and let $\xi \in \ell^2(\N)$ be such that $J_a \xi = \lambda \xi$.
\begin{enumerate}[label=(\arabic*)]
\item\label{1DELemmaVp}
If $\vert \lambda \vert \le 2$, then $\xi = 0$.
\item\label{2DELemmaVp}
If $\vert \lambda \vert > 2$ and $\xi \ne 0$,
then $a > \sqrt 2$ and $\lambda = \pm \frac{ a^2 }{ \sqrt{a^2 - 1} }$.
\end{enumerate}
\end{lem}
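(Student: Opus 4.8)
The plan is to convert the eigenvalue equation $J_a\xi=\lambda\xi$ into a scalar three-term recurrence for the coordinates $(\xi_n)_{n\ge 0}$ and then read off which solutions lie in $\ell^2(\N)$. Writing $J_a\xi=\lambda\xi$ row by row from the matrix $(Jac_a)$ gives the two boundary relations $a\xi_1=\lambda\xi_0$ and $a\xi_0+\xi_2=\lambda\xi_1$, together with the homogeneous recurrence $\xi_{n+1}=\lambda\xi_n-\xi_{n-1}$ for all $n\ge 2$. The characteristic equation of this recurrence is $r^2-\lambda r+1=0$, whose roots $r_\pm=\frac12\big(\lambda\pm\sqrt{\lambda^2-4}\big)$ satisfy $r_+r_-=1$ and $r_++r_-=\lambda$. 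Since the recurrence holds from $n=2$ on, the tail $(\xi_n)_{n\ge 1}$ has the form $\xi_n=Ar_+^{n}+Br_-^{n}$ (or $\xi_n=(A+Bn)r^n$ when the roots coincide), and everything hinges on which such sequences are square-summable.

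For part (1), suppose $|\lambda|\le 2$, so $\lambda^2-4\le 0$: either the roots are complex conjugates of modulus $1$ (when $|\lambda|<2$) or there is a double root $r=\pm 1$ (when $|\lambda|=2$). In the first case $\xi_n=Ae^{in\theta}+Be^{-in\theta}$ with $\theta\in(0,\pi)$, so that $\frac1N\sum_{n=1}^N|\xi_n|^2\to|A|^2+|B|^2$; in the second case $|\xi_n|=|A+Bn|$. Either way $\sum_n|\xi_n|^2<\infty$ forces $A=B=0$, hence $\xi_n=0$ for all $n\ge 1$. The relation $a\xi_1=\lambda\xi_0$ then gives $\lambda\xi_0=0$, while $a\xi_0+\xi_2=\lambda\xi_1$ gives $a\xi_0=0$; since $a>0$ this yields $\xi_0=0$, so $\xi=0$.

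For part (2), suppose $|\lambda|>2$ and $\xi\ne 0$. Now $\lambda^2-4>0$, so $r_+,r_-$ are real, distinct, of the same sign, with product $1$; exactly one of them, say $r_+$, has $|r_+|>1$, whence $|r_-|<1$. Square-summability forces the coefficient of the expanding mode to vanish, i.e.\ $A=0$, so $\xi_n=Br^{n}$ for $n\ge 1$ with $r:=r_-$, $|r|<1$; moreover $B\ne 0$, for otherwise the two boundary relations would give $\xi=0$ as in part (1). I would then substitute $\xi_1=Br$, $\xi_2=Br^2$, and $\xi_0=aBr/\lambda$ (from $a\xi_1=\lambda\xi_0$, using $\lambda\ne 0$) into the remaining relation $a\xi_0+\xi_2=\lambda\xi_1$; after dividing by $Br\ne 0$ this reads $a^2/\lambda+r=\lambda$. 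Combining it with $\lambda=r+1/r$ (valid since $r^2-\lambda r+1=0$) gives first $\lambda=a^2r$ and then $r^2(a^2-1)=1$, so $r^2=1/(a^2-1)$ and $\lambda=a^2r=\pm a^2/\sqrt{a^2-1}$. Finally the condition $|r|<1$ becomes $1/(a^2-1)<1$, that is $a>\sqrt2$, which is exactly the asserted conclusion.

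I expect the only delicate point to be the treatment of the two perturbed boundary rows: the recurrence itself is routine, but it is the coupling of the corner entries $a$ with the $\ell^2$ selection of the decaying mode that simultaneously pins down the value of $\lambda$ and produces the threshold $a>\sqrt2$. A conceptual alternative for part (2), bypassing the recurrence bookkeeping, is to use the factorization in~\ref{2DElemmaJ} of Lemma~\ref{lemmaJ}: writing $\lambda=z+1/z$ with $0<|z|<1$, the operators $1-zS$ and $1-zS^*$ are invertible by~\ref{7DELemmaSpecwshift} of Lemma~\ref{LemmaSpecwshift}, so $\lambda-J_a$ has nontrivial kernel if and only if the middle factor $(1-z^2(a^2-1))P_0+(1-P_0)$ does, i.e.\ if and only if $1-z^2(a^2-1)=0$; this gives $z^2=1/(a^2-1)$, hence the same $\lambda$, with eigenvector $(1-zS^*)^{-1}\delta_0$ lying in $\ell^2(\N)$ precisely when $|z|<1$, that is when $a>\sqrt2$.
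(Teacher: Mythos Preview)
Your proof is correct and follows essentially the same route as the paper: reduce $J_a\xi=\lambda\xi$ to the two boundary equations and the three-term recurrence, solve the recurrence via its characteristic equation $r^2-\lambda r+1=0$, use square-summability to kill all modes when $|\lambda|\le 2$ and to select the decaying mode when $|\lambda|>2$, and then let the boundary equations pin down $\lambda$ and the threshold $a>\sqrt2$. Your algebra in part~(2), working with $\lambda=r+1/r$ rather than the explicit radicals $\tfrac12(\lambda\pm\sqrt{\lambda^2-4})$, is a bit cleaner than the paper's, and your alternative argument via the factorization of Lemma~\ref{lemmaJ}\ref{2DElemmaJ} is a genuine bonus that the paper does not give.
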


\begin{proof}
The space $\ell^2(\N)$ is naturally a subspace
of the space $\C^\N$ of all sequences of complex numbers,
and $J_a$ has a natural extension to a linear operator on $\C^\N$.
Let $\lambda \in \R$ and $\xi \in \C^N$ be such that $J_a \xi = \lambda \xi$,
i.e., such that
$$
\begin{aligned}
(I \hskip.2cm \text{for} \hskip.2cm (J_a \xi)_0 = \lambda \xi_0)
\hskip2.7cm a \xi_1 &= \lambda \xi_0,
\\
(II \hskip.2cm \text{for} \hskip.2cm (J_a \xi)_1 = \lambda \xi_1)
\hskip1.8cm a \xi_0 + \xi_2 &= \lambda \xi_1,
\\
(III \hskip.2cm \text{for} \hskip.2cm (J_a \xi)_n = \lambda \xi_n)
\hskip1.1cm \xi_{n-1} + \xi_{n+1} & = \lambda \xi_n
\hskip.5cm \text{for all} \hskip.2cm
n \ge 2.
\end{aligned}
$$
The characteristic equation $r^2 - \lambda r + 1 = 0$
of the difference equation (III) has roots $\frac{1}{2}(\lambda \pm \sqrt{\lambda^2 - 4})$,
so that the general solution of (III) is, for all $n \ge 1$,
$$
\begin{aligned}
(i) \hskip.5cm
&\xi_n = K_1 \left( \frac{1}{2}(\lambda + \sqrt{\lambda^2 - 4}) \right)^n
+ K_2 \left( \frac{1}{2}(\lambda - \sqrt{\lambda^2 - 4}) \right)^n
\hskip.2cm \text{when} \hskip.2cm
\vert \lambda \vert \ne 2,
\\
(ii) \hskip.4cm
&\xi_n = K_1 + K_2 n
\hskip.2cm \text{when} \hskip.2cm
\lambda = 2,
\\
(iii) \hskip.4cm
&\xi_n = (-1)^n (K_1 + K_2 n)
\hskip.2cm \text{when} \hskip.2cm
\lambda = - 2 ,
\end{aligned}
$$
where $K_1, K_2$ are constants.
Equation (I) provides the value of $\xi_0$
and Equation (II) provides a condition involving $\lambda$ and $a$.

\vskip.2cm

\emph{Proof of~\ref{1DELemmaVp}.}
When $\vert \lambda \vert < 2$,
the roots $\frac{1}{2} (\lambda \pm i \sqrt{4 - \lambda^2})$
have absolute value~$1$,
so that the solution $\xi$ given by (i) is in $\ell^2(\N)$ if and only if $K_1 = K_2 = 0$.
Similarly, when $\vert \lambda \vert = 2$,
the only solution of $J_a \xi = \lambda \xi$ in $\ell^2(\N)$ is the zero vector.

\vskip.2cm

\emph{Proof of~\ref{2DELemmaVp}.}
Suppose first that $\lambda > 2$.
Since $\frac{1}{2} (\lambda + \sqrt{\lambda^2 - 4}) > 1$,
any solution $0 \ne \xi = (\xi_n)_{n \in \N}$ in $\ell^2(\N)$ of $J_a \xi = \lambda \xi$
is such that $\xi_n = K \left( \frac{1}{2}(\lambda - \sqrt{\lambda^2 - 4}) \right)^n$
for some constant $K \ne 0$ and for all $n \ge 1$.
Upon multiplying $\xi$ by a constant, we can assume that $K = 1$.
Equation (I) implies that
$\xi_0 = \frac{a}{2 \lambda} (\lambda - \sqrt{\lambda^2 - 4})$
and Equation~(II) implies successively
$$
\begin{aligned}
&
\frac{ a^2 }{ 2 \lambda } (\lambda - \sqrt{\lambda^2 - 4})
+ \frac{1}{4} (\lambda - \sqrt{\lambda^2 - 4})^2
= \frac{\lambda}{2} (\lambda - \sqrt{\lambda^2 - 4}) ,
\\ &
\frac{ a^2 }{2} - \frac{ a^2 }{ 2 \lambda} \sqrt{ \lambda^2 - 4} - 1 = 0 ,
\hskip.5cm
(a^2 - 2) = \frac{ a^2 }{ \lambda} \sqrt{ \lambda^2 - 4} ,
\hskip.5cm
\text{hence} \hskip.2cm a^2 \ge 2
\\ &
a^4 \lambda^2 - 4 a^2 \lambda^2 + 4 \lambda^2 = a^4 \lambda^2 - 4 a^4
\hskip.5cm \text{hence} \hskip.2cm
\lambda = \frac{ a^2 }{ \sqrt{ a^2-1} } .
\end{aligned}
$$
Suppose now that $\lambda < -2$.
A solution $0 \ne \xi \in \ell^2(\N)$ of $J_a \xi = \lambda \xi$
satisfies $\xi_n = K \left( \frac{1}{2}(\lambda + \sqrt{\lambda^2 - 4}) \right)^n$
for some constant $K \ne 0$ and for all $n \ge 1$.
As above, it follows that $a^2 > 2$ and that $\lambda = - a^2 / \sqrt{ a^2 - 1}$.
\end{proof}

\begin{prop}
\label{PropSpectrumJa}
Let $a > 0$ and let $J_a$ be the Jacobi operator as above.
\begin{enumerate}[label=(\arabic*)]
\item\label{1DEPropSpectrumJa}
When $a \le \sqrt 2$, the operator $J_a$ has norm $2$,
\newline
spectrum $\mathopen[ -2 , 2 \mathclose]$,
\newline
and it has no eigenvalue.
\item\label{2DEPropSpectrumJa}
When $a > \sqrt 2$, the operator $J_a$ has norm $\frac{ a^2 }{ \sqrt{a^2 - 1} }$,
\newline
spectrum $\big\{ - \frac{ a^2 }{ \sqrt{a^2 - 1} } \big\} \cup \mathopen[ -2 , 2 \mathclose]
\cup \big\{ \frac{ a^2 }{ \sqrt{a^2 - 1} } \big\}$,
\newline
it has two simple eigenvalues $\pm \frac{ a^2 }{ \sqrt{a^2 - 1} }$,
and no other eigenvalue.
\end{enumerate}
\end{prop}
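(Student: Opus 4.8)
The plan is to treat the two cases $a \le \sqrt 2$ and $a > \sqrt 2$ separately, reading off the spectrum from the factorisation in Lemma~\ref{lemmaJ}~\ref{2DElemmaJ} and the eigenvalues from Lemma~\ref{LemmaVp}. For the case $a \le \sqrt 2$ there is almost nothing to do: the norm ($=2$) and the spectrum ($=[-2,2]$) are exactly the content of Lemma~\ref{lemmaJ}~\ref{5DElemmaJ}. The only new point is that $J_a$ has no eigenvalue, and this follows at once from self-adjointness: any eigenvalue is real and lies in $\Sigma(J_a) = [-2,2]$, while Lemma~\ref{LemmaVp}~\ref{1DELemmaVp} asserts that no $\lambda$ with $\vert \lambda \vert \le 2$ is an eigenvalue. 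Hence $J_a$ has no eigenvalue at all.

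For the case $a > \sqrt 2$, write $\lambda_0 = \frac{a^2}{\sqrt{a^2-1}}$; I would argue in three steps. First, that the band lies in the spectrum: Lemma~\ref{lemmaJ}~\ref{2DElemmaJ} and~\ref{3DElemmaJ} are established \emph{before} the hypothesis $a \le \sqrt 2$ is imposed, hence hold for every $a > 0$. Writing $z = e^{i\theta}$ on the unit circle, the quantity $\frac1z + z = 2\cos\theta$ sweeps out $[-2,2]$, so Lemma~\ref{lemmaJ}~\ref{3DElemmaJ} gives $[-2,2] \subseteq \Sigma(J_a)$. Second, that $\pm\lambda_0$ are eigenvalues: one checks $\lambda_0 > 2$, since $\lambda_0 > 2 \Leftrightarrow a^4 > 4(a^2-1) \Leftrightarrow (a^2-2)^2 > 0$, which holds as $a \ne \sqrt 2$. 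Following the computation in the proof of Lemma~\ref{LemmaVp}~\ref{2DELemmaVp}, the vector $\xi$ with $\xi_0 = 1/a$ and $\xi_n = r^n$ for $n \ge 1$, where $r = \frac12\bigl(\lambda_0 - \sqrt{\lambda_0^2-4}\bigr) \in (0,1)$, lies in $\ell^2(\N)$ and satisfies $J_a\xi = \lambda_0\xi$; so $\lambda_0$ is an eigenvalue. The diagonal unitary $D$ with $D\delta_n = (-1)^n\delta_n$ satisfies $D J_a D = -J_a$ (because $J_a$ has nonzero entries only between indices of opposite parity), whence $-\lambda_0$ is an eigenvalue with eigenvector $D\xi$. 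Conversely, by Lemma~\ref{LemmaVp} these are the only possible eigenvalues, and each is simple because square-summability forces the coefficient of the root exceeding $1$ to vanish, leaving a one-dimensional solution space.

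The third step shows the spectrum contains nothing else. For $\lambda \notin [-2,2]$ there is a unique $z$ with $0 < \vert z \vert < 1$ and $z + \frac1z = \lambda$. By Lemma~\ref{LemmaSpecwshift}~\ref{7DELemmaSpecwshift} the outer factors $1 - zS$ and $1 - zS^*$ in Lemma~\ref{lemmaJ}~\ref{2DElemmaJ} are invertible, so $\frac1z + z - J_a$ is invertible if and only if the middle factor $(1 - z^2(a^2-1))P_0 + (1 - P_0)$ is, i.e.\ if and only if $1 - z^2(a^2-1) \ne 0$, i.e.\ if and only if $z \ne \pm\frac{1}{\sqrt{a^2-1}}$. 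Since $a > \sqrt 2$ these two values satisfy $0 < \vert z \vert < 1$, and for them $z + \frac1z = \pm\frac{a^2}{\sqrt{a^2-1}} = \pm\lambda_0$. Hence the only points of $\C \setminus [-2,2]$ in the spectrum are $\pm\lambda_0$, so $\Sigma(J_a) = \{-\lambda_0\} \cup [-2,2] \cup \{\lambda_0\}$, and $\Vert J_a \Vert = \rho(J_a) = \lambda_0$ because $J_a$ is self-adjoint and $\lambda_0 > 2$.

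I expect the main obstacle to be the bookkeeping rather than any deep idea: on the one hand confirming that $\pm\lambda_0$ are genuine eigenvalues with square-summable eigenvectors (not merely spectral points), which is handled by the explicit geometric solution together with the vanishing of the unbounded mode; and on the other hand keeping track of the parametrisation $z \mapsto z + \frac1z$, a bijection from the punctured open unit disc onto $\C \setminus [-2,2]$, so that each spectral value outside the band corresponds to exactly one admissible $z$ and the zeros of the middle factor land precisely at $\pm\lambda_0$.
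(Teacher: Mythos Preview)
Your proof is correct. For case~\ref{1DEPropSpectrumJa} it coincides with the paper's argument. For case~\ref{2DEPropSpectrumJa} you take a genuinely different route: the paper invokes Weyl's theorem on the invariance of the essential spectrum under compact perturbations, observing that $J_a - J_1$ has finite rank, so $\Sigma_{\mathrm{ess}}(J_a) = \Sigma_{\mathrm{ess}}(J_1) = [-2,2]$, and the remaining spectrum consists of eigenvalues of finite multiplicity, which Lemma~\ref{LemmaVp}~\ref{2DELemmaVp} constrains to $\pm\lambda_0$. You instead extend the factorisation argument of Lemma~\ref{lemmaJ}~\ref{4DElemmaJ} to $a > \sqrt 2$, determining directly for which $z$ with $0 < |z| < 1$ the middle factor $(1 - z^2(a^2-1))P_0 + (1-P_0)$ becomes singular, and you construct the eigenvectors explicitly. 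Your approach is more elementary and entirely self-contained within the lemmas of Section~\ref{S+EofJc}; the paper's approach is shorter but imports an outside theorem. Your version also has the merit of making explicit that $\pm\lambda_0$ are \emph{genuinely} eigenvalues (not just the only candidates), a point the paper leaves implicit in the computations of Lemma~\ref{LemmaVp}.
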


For a third information concerning $J_a$, see Proposition~\ref{PropCyclicJa} below.

\begin{proof}
Claim~\ref{1DEPropSpectrumJa} is contained in
Lemmas~\ref{lemmaJ}~\ref{5DElemmaJ} and~\ref{LemmaVp}~\ref{1DELemmaVp}.
\par

For~\ref{2DEPropSpectrumJa}, recall first that,
for a bounded self-adjoint operator $X$
on an infinite dimensional Hilbert space $\Hi$,
the \emph{discrete spectrum} $\Sigma_{{\rm disc}}(X)$
is the set of isolated points of the spectrum which are eigenvalues of finite multiplicity,
and the \emph{essential spectrum} $\Sigma_{{\rm ess}} (X)$
is the complement $\Sigma (X) \smallsetminus \Sigma_{{\rm disc}}(X)$;
the essential spectrum is a nonempty closed subset of the spectrum.
It is a theorem of Weyl that, for a compact self-adjoint operator $K$ on $\Hi$,
the essential spectra of $X$ and $X+K$ are the same
\cite[Theorem 3.14.1]{Sim4--15}.
\par

Consider the free Jacobi matrix $J_1$.
By Lemma~\ref{lemmaJ}, we know that its spectrum is $\mathopen[ -2 , 2 \mathclose]$,
so that its essential spectrum coincides with its spectrum.
For $a > 0$, the operator $J_a$ is a compact perturbation of $J_1$
(indeed a finite rank perturbation).
By Weyl's theorem,
the essential spectrum of $J_a$ is also $\mathopen[ -2 , 2 \mathclose]$.
It follows that the spectrum of $J_a$ is the union of $\mathopen[ -2 , 2 \mathclose]$
and of the eigenvalues of $J_a$, which are necessarily of finite multiplicity.
Claim~\ref{2DEPropSpectrumJa} now follows from
Lemma~\ref{LemmaVp}~\ref{2DELemmaVp}.
\end{proof}

Before showing what we need on dominant vectors for $J_a$,
we recall some general facts on orthogonal polynomials and Jacobi operators.
Consider an infinite sequence $a_* = (a_n)_{n \ge 0}$ of positive real numbers such that
$$
\sup_{n \ge 0} a_n < \infty .
\leqno{(Fa)}
$$
(Later we will need the case $a_* = (a, 1, 1, 1, \hdots)$
of the subdiagonal entries of the matrix $J_a$.)
Define a sequence $(Q_n)_{n \ge 0}$ of polynomials with real coefficients
and positive leading coefficients by $Q_0(t) = 1$ and
$$
\begin{aligned}
t Q_0 (t) &=
\hskip2.7cm a_0 Q_1 (t)
\\
t Q_n (t) &= a_{n-1} Q_{n-1} (t) + a_n Q_{n+1} (t)
\hskip.5cm \text{for all} \hskip.2cm
n \ge 1 .
\end{aligned}
\leqno{(Rec})
$$
The main claim of the following proposition
is the particular case needed below of a theorem of Favard.
[In the general case, there is moreover a bounded sequence $(b_n)_{n \ge 0}$
of real numbers, and the recurrence relation is of the form
$tQ_n (t) = a_{n-1} Q_{n-1} (t) + b_n Q_n(t) + a_n Q_{n+1} (t)$.]
Here, a measure on $\R$ is \emph{nontrivial} if its closed support
is not a finite set of points.
For a measure $\mu$ on $\R$ which is nonzero
and which has compact closed support,
we denote by ${\rm minsupp}(\mu)$ the largest number
and by ${\rm maxsupp}(\mu)$
the smallest number
such that the closed support of $\mu$ is contained in the closed interval
$\mathopen[ {\rm minsupp}(\mu) , \hskip.1cm {\rm maxsupp}(\mu) \mathclose]$.

\begin{prop}[Favard]
\label{PropFavard}
Let $(a_n)_{n \ge 0}$ and $(Q_n)_{n \ge 0}$ be as above,
satisfying respectively (Fa) and (Rec).
\par

Then there exists a nontrivial finite positive measure $\mu$ on $\R$,
with closed support contained in 
$\mathopen[ - \sup_{n \ge 0} a_n, \sup_{n \ge 0} a_n \mathclose]$,
such that the $Q_n$~'s are orthonormal polynomials with respect to $\mu$,
i.e., such that
$$
\int_\R Q_m(t) Q_n(t) d\mu(t) = \left\{
\begin{aligned}
1 \hskip.2cm &\text{if} \hskip.2cm m = n
\\
0 \hskip.2cm &\text{if} \hskip.2cm m \ne n .
\end{aligned}
\right.
$$
\par

Moreover, for each $n \ge 1$, the polynomial $Q_n$
has $n$ simple roots in the open interval
$\mathopen] {\rm minsupp}(\mu) , \hskip.1cm {\rm maxsupp}(\mu) \mathclose[$.
\end{prop}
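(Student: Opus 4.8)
The plan is to realize the three-term recurrence (Rec) as the action of a Jacobi operator on its standard basis, and to take $\mu$ to be the local spectral measure of that operator at $\delta_0$, exactly in the sense of Section~\ref{SectionDomVectors}. Concretely, let $J$ be the bounded self-adjoint operator on $\ell^2(\N)$ whose matrix with respect to $\delta_* = (\delta_n)_{n \ge 0}$ is tridiagonal with zero diagonal and sub- and super-diagonal entries $a_0, a_1, a_2, \hdots$. Condition (Fa) guarantees boundedness, with $\Vert J \Vert \le 2 \sup_n a_n$, since $J = W + W^*$ for the weighted shift $W$ determined by $W \delta_{n+1} = a_n \delta_n$, whose norm is $\sup_n a_n$. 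I then set $\mu := \mu_{\delta_0}$, the local spectral measure of $J$ at $\delta_0$ defined by Formula~\eqref{EqLocalSpecMeas}; it is a finite positive measure with total mass $\Vert \delta_0 \Vert^2 = 1$, whose closed support lands in $\Sigma(J) \subseteq [-\Vert J \Vert, \Vert J \Vert]$.

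The first key step is the identity $Q_n(J) \delta_0 = \delta_n$ for all $n \ge 0$, proved by induction. It holds for $n=0$ since $Q_0 = 1$, and the relation $t Q_0 = a_0 Q_1$ together with $J \delta_0 = a_0 \delta_1$ gives $Q_1(J) \delta_0 = \delta_1$. Assuming the identity up to index $n$, I solve (Rec) for $Q_{n+1}$, apply it to $\delta_0$, and use $J \delta_n = a_{n-1} \delta_{n-1} + a_n \delta_{n+1}$ to see the terms collapse to $\delta_{n+1}$. Orthonormality is then immediate: since the $Q_n$ have real coefficients and $J$ is self-adjoint, the product $f = Q_m Q_n$ satisfies $f(J) = Q_m(J) Q_n(J)$, whence
\[
\int_\R Q_m Q_n \, d\mu = \langle Q_m(J) Q_n(J) \delta_0 \sca \delta_0 \rangle = \langle Q_n(J) \delta_0 \sca Q_m(J) \delta_0 \rangle = \langle \delta_n \sca \delta_m \rangle,
\]
the Kronecker delta. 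Nontriviality of $\mu$ follows because $\{Q_n\}_{n \ge 0}$ is then an infinite orthonormal family in $L^2(\R, \mu)$, so this space is infinite-dimensional, which forces the closed support to be infinite (a measure carried by $k$ points has $L^2$ of dimension $k$).

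For the statement on roots I would use the standard orthogonal-polynomial argument, which needs only orthogonality and the infinitude of the support. Fix $n \ge 1$ and write $M_- = \min\text{supp}(\mu)$, $M_+ = \max\text{supp}(\mu)$. Let $x_1 < \cdots < x_k$ be the points at which $Q_n$ changes sign inside $[M_-, M_+]$; since $Q_n \perp Q_0 = 1$ we have $k \ge 1$. If $k < n$, then $R = \prod_{i=1}^k (t - x_i)$ has degree below $n$, so $\int Q_n R \, d\mu = 0$; but $Q_n R$ has constant sign on the support and vanishes only on a finite set, so the integral is nonzero because $\mu$ has infinite support, a contradiction. Hence $k = n$, and as $\deg Q_n = n$ these are $n$ distinct, simple real roots. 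To place them in the \emph{open} interval $\mathopen] M_-, M_+ \mathclose[$, suppose the largest root satisfied $x_n \ge M_+$ and test against $R = \prod_{i<n}(t - x_i)$ of degree $n-1$: then $Q_n R = c\,(t - x_n)\prod_{i<n}(t-x_i)^2 \le 0$ on the support, again forcing $\int Q_n R \, d\mu < 0 \ne 0$; the smallest root is handled symmetrically.

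The inductive identity and the orthonormality computation are routine; the step requiring the most care is the roots assertion, specifically the sign-testing argument that pushes the extreme zeros strictly inside $\mathopen]M_-, M_+\mathclose[$, together with the observation that ``$\mu$ has infinite support'' is precisely what forbids $\int Q_n R \, d\mu = 0$ when $Q_n R$ does not change sign. One honest caveat about the enclosing interval: this operator-theoretic route naturally yields closed support in $[-\Vert J \Vert, \Vert J \Vert]$, and the sharp general estimate is $\Vert J \Vert \le 2 \sup_n a_n$ (for the free matrix $J_1$ one has $\Vert J_1 \Vert = 2$ while $\sup_n a_n = 1$), so the containing interval should read $[-2\sup_n a_n,\, 2\sup_n a_n]$.
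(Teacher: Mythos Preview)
Your argument is correct and is essentially the standard spectral-theoretic proof of Favard's theorem; it is in fact the construction that the paper's cited references (Simon, \S 4.1) would unfold, whereas the paper itself chooses merely to quote those references. The identity $Q_n(J)\delta_0 = \delta_n$ and the orthonormality computation are fine, and your nontriviality argument via $\dim L^2(\mu) = \infty$ is clean. For the location of zeros, the paper gives only Szeg\H{o}'s sign-change argument, counting roots directly in the open interval; your two-step variant (first closed interval by sign changes, then strict interior by testing against $\prod_{i<n}(t-x_i)$) reaches the same conclusion by the same mechanism and is arguably more explicit.

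You are also right to flag the stated enclosing interval: the proposition's claim that the support lies in $[-\sup_n a_n,\, \sup_n a_n]$ cannot be correct as written, since already for the free Jacobi matrix $a_* = (1,1,1,\dots)$ the spectrum is $[-2,2]$ (this is exactly the paper's own Lemma~\ref{lemmaJ}\ref{5DElemmaJ}). Your bound $\Vert J \Vert \le 2\sup_n a_n$, coming from $J = W + W^*$ with $\Vert W \Vert = \sup_n a_n$, is the sharp general estimate, and the support statement should read $[-2\sup_n a_n,\, 2\sup_n a_n]$. The paper's proof, being only a reference, does not confront this discrepancy.
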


\begin{proof}[Reference for the proof and remark]
For the proof of the claim on $\mu$, which is the Favard Theorem,
we refer to Section~4.1 in \cite{Sim4--15},
or alternatively to \cite[Theorem 1.3.7]{Simo--11}.
\par

For the proof of the fact that the roots of $Q_n$ are all real and simple,
see~\cite[Proposition 4.1.4]{Sim4--15}.
Since we have not found in \cite{Sim4--15} a proof that these roots are all
in $\mathopen] {\rm minsupp}(\mu) , {\rm maxsupp}(\mu) \mathclose[$,
here is the argument of Szeg\"o's book on orthogonal polynomials.
\par

Let $n \ge 1$, and let $\ell$ denote
the number of roots $t_1 < t_2 < \cdots < t_\ell$ of $Q_n$
in the open interval $\mathopen] {\rm minsupp}(\mu) , {\rm maxsupp}(\mu) \mathclose[$.
Define a polynomial $R_n$ by
$R_n(t) = Q_n(t) \prod_{j=1}^\ell (t-t_j)$.
Assume by contradiction that $\ell < n$.
On the one hand, $R_n$ does not change sign on this interval,
hence $\int_{{\rm minsupp}(\mu)}^{{\rm maxsupp}(\mu)} R_n(t) d\mu(t) \ne 0$.
On the other hand, $Q_n$ is orthogonal to any polynomial of degree $< n$,
hence
\newline
$\int_{{\rm minsupp}(\mu)}^{{\rm maxsupp}(\mu)} R_n(t) d\mu(t) = 0$.
This is absurd, and therefore $\ell = n$.
\end{proof}

Together with the infinite sequence of positive numbers $a_* = (a_n)_{n \ge 0}$
satisfying (Fa) above, consider now
the Hilbert space $\ell^2(\N)$,
its standard orthonormal basis $\delta_* = (\delta_n)_{n \ge 0}$,
and the bounded self-adjoint operator $J = J_{a_*}$ defined by
$$
\begin{aligned}
J_{a_*} \delta_0&=
\hskip2.1cm a_0 \delta_1
\\
J_{a_*} \delta_n&= a_{n-1} \delta_{n-1} + a_n\delta_{n+1}
\hskip.5cm \text{for all} \hskip.2cm
n \ge 1 ,
\end{aligned}
$$
of matrix
$$
J_{a_*} = \begin{pmatrix}
0 & a_0 & 0 & 0 & \cdots
\\
a_0 & 0 & a_1 & 0 & \cdots
\\
0 & a_1 & 0 & a_2 & \cdots
\\
0 & 0 & a_2 & 0 & \cdots
\\
\cdots & \cdots & \cdots & \cdots & \ddots
\end{pmatrix}
\leqno{(Jac_{a_*})}
$$
with respect to $\delta_*$.

\begin{prop}
\label{propuniteqJM}
Let $a_*$ and $J_{a_*}$ be as above,
let $\mu$ be the measure of Proposition~\ref{PropFavard},
and let $M_\mu$ be the operator
of multiplication by $t$ on the Hilbert space $L^2(\R, \mu)$,
as in Proposition~\ref{c+m}.
\par

Then the operators $J_{a_*}$ and $M_\mu$
are unitarily equivalent.
\end{prop}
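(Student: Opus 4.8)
The plan is to build the unitary directly on orthonormal bases: send the basis vector $\varepsilon_n$ of $\ell^2(\N)$ to the orthonormal polynomial $Q_n \in L^2(\mu)$, and then verify that the resulting map is a surjective isometry intertwining $J_{a_*}$ with $M_\mu$. This is the same mechanism as in the proof of Corollary~\ref{c=d}, and it simultaneously exhibits $\varepsilon_0$ as a cyclic vector for $J_{a_*}$ whose local spectral measure is $\mu$.

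First I would define $U \varepsilon_n = Q_n$ for all $n \ge 0$ and extend by linearity. By the Favard theorem (Proposition~\ref{PropFavard}), the family $(Q_n)_{n \ge 0}$ is orthonormal in $L^2(\mu)$, so $U$ carries an orthonormal basis of $\ell^2(\N)$ to an orthonormal system and therefore extends to an isometry of $\ell^2(\N)$ onto the closed linear span of $\{Q_n : n \ge 0\}$ in $L^2(\mu)$.

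Next I would check surjectivity. Since $Q_n$ has degree exactly $n$ with positive leading coefficient (as forced by the recurrence relations (Rec) starting from $Q_0 = 1$), the linear span of $Q_0, \hdots, Q_n$ coincides with the space of polynomials of degree at most $n$. Hence the closed span of all the $Q_n$ equals the closure in $L^2(\mu)$ of the algebra $\mathcal P$ of polynomial functions. Because $\mu$ is a finite measure with compact support (again by Proposition~\ref{PropFavard}), the Stone--Weierstrass theorem makes $\mathcal P$ dense in $\mathcal C(\text{supp}\,\mu)$ for the uniform norm, and continuous functions are dense in $L^2(\mu)$; thus the closed span of $\{Q_n\}$ is all of $L^2(\mu)$ and $U$ is a surjective isometry.

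Finally I would verify the intertwining relation $U J_{a_*} = M_\mu U$ on the basis. For $n \ge 1$ one has $U J_{a_*} \varepsilon_n = a_{n-1} Q_{n-1} + a_n Q_{n+1}$, which by (Rec) equals $t Q_n = M_\mu Q_n = M_\mu U \varepsilon_n$; for $n = 0$ the first line of (Rec) gives $U J_{a_*} \varepsilon_0 = a_0 Q_1 = t Q_0 = M_\mu U \varepsilon_0$. By linearity and continuity this yields $U J_{a_*} = M_\mu U$, i.e.\ $J_{a_*} = U^* M_\mu U$, so the two operators are unitarily equivalent. The only point requiring care is the density of $\mathcal P$ in $L^2(\mu)$, but this is routine once the compactness of $\text{supp}\,\mu$ is in hand; everything else is a direct translation of the three-term recurrence into the action of $M_\mu$.
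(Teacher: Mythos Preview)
Your proof is correct and follows the same approach as the paper: define the unitary by $\varepsilon_n \mapsto Q_n$, use orthonormality from Favard's theorem, and verify the intertwining relation via the three-term recurrence. You actually supply more detail than the paper does, spelling out the density argument (Stone--Weierstrass plus compact support of $\mu$) that justifies why $(Q_n)_{n\ge 0}$ is a \emph{basis} and not merely an orthonormal system; the paper simply asserts this and refers back to the analogous computation for the infinite ray in Proposition~\ref{PropExRay}.
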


\begin{proof}
The sequence $(Q_n)_{n \ge 0}$ defined above
is an orthonormal basis of the space $L^2(\mu)$.
Define an operator $W \, \colon \ell^2(\N) \to L^2(\R, \mu)$ by
$W \delta_n = Q_n$ for all $n \ge 0$;
note that $W$ is a surjective isometry.
Then
$$
W J_{a_*} W^* = M_\mu .
$$
(A particular case of this appears again
in the proof of Proposition~\ref{PropExRay}.)
\end{proof}

For $n \ge 0$, the vector $\delta_n$ is cyclic for $J_{a_*}$
if and only if the vector $Q_n$ is cyclic for $M_\mu$,
and by Proposition~\ref{c+m} this is the case if and only if
$$
\mu( \{ t \in \Sigma \hskip.1cm : \hskip.1cm Q_n (t) = 0 \} ) = 0 ,
$$
where $\Sigma$ denotes the closed support of $\mu$.
As we do not know whether this is true or not,
we come back to the special case of the sequence
$(a, 1, 1, 1, \hdots)$.

\begin{prop}
\label{PropCyclicJa}
Let $a > 0$ and let $J_a$ be the Jacobi operator defined on $\ell^2(\N)$
by the matrix of Definition~\ref{defJa}.
\par
Then, for all $n \ge 0$, the standard basis vector $\delta_n$
is cyclic for $J_a$.
\end{prop}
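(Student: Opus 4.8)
The plan is to transport the question to the multiplication operator $M_\mu$ and apply the cyclicity criterion of Proposition~\ref{c+m}. By Proposition~\ref{propuniteqJM}, the operator $J_a$ is unitarily equivalent to $M_\mu$ via the isometry $W$ with $W\varepsilon_n = Q_n$, where $(Q_n)_{n \ge 0}$ are the orthonormal polynomials attached to the sequence $a_* = (a, 1, 1, \hdots)$ and $\mu$ is the Favard measure of Proposition~\ref{PropFavard}. Since unitary equivalence preserves cyclicity, $\varepsilon_n$ is cyclic for $J_a$ if and only if $Q_n$ is cyclic for $M_\mu$, and by Proposition~\ref{c+m} the latter holds if and only if $\mu(\{ t : Q_n(t) = 0 \}) = 0$. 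As $Q_n$ has degree $n$, its zero set is finite (at most $n$ points), so it suffices to show that $\mu$ charges none of the roots of $Q_n$; the case $n = 0$ is immediate since $Q_0 = 1$ has no root.

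The heart of the argument is to locate the atoms of $\mu$. Under the equivalence $J_a \cong M_\mu$, a number $\lambda$ is an eigenvalue of $M_\mu$ exactly when there is a nonzero $\psi \in L^2(\mu)$ with $(t - \lambda)\psi = 0$ for $\mu$-almost all $t$, that is, exactly when $\mu(\{\lambda\}) > 0$; hence the atoms of $\mu$ are precisely the eigenvalues of $J_a$. Moreover the vector $\varepsilon_0 = Q_0$ is the constant function $1$, which is cyclic for $M_\mu$, so the closed support of $\mu$ coincides with $\Sigma(J_a)$, and its extreme points are $\min \Sigma(J_a)$ and $\max \Sigma(J_a)$. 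I would then invoke Proposition~\ref{PropSpectrumJa}: when $a \le \sqrt 2$ the operator $J_a$ has no eigenvalue, so $\mu$ is non-atomic; when $a > \sqrt 2$ the only eigenvalues are $\pm \frac{a^2}{\sqrt{a^2 - 1}}$, and these are exactly $\min \Sigma(J_a)$ and $\max \Sigma(J_a)$, i.e.\ the two endpoints of the support interval.

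To conclude, I would combine this with the location of the roots supplied by Proposition~\ref{PropFavard}: for every $n \ge 1$ the $n$ roots of $Q_n$ are simple and lie in the \emph{open} interval $\mathopen] {\rm min~supp}(\mu) , \hskip.1cm {\rm max~supp}(\mu) \mathclose[$. Therefore no root of $Q_n$ can equal either of the potential atoms $\pm \frac{a^2}{\sqrt{a^2-1}}$, which sit at the endpoints, and when $a \le \sqrt 2$ there are no atoms at all. In both cases the finitely many roots of $Q_n$ are non-atoms of $\mu$, so the finite set $\{ t : Q_n(t) = 0 \}$ has $\mu$-measure zero. By Proposition~\ref{c+m} this shows $Q_n$ is cyclic for $M_\mu$, hence $\varepsilon_n$ is cyclic for $J_a$.

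The one genuinely load-bearing point, and the place where the special structure of $J_a$ enters, is the coincidence recorded in the second paragraph: the eigenvalues of $J_a$ are exactly the two extreme points of its spectrum, so they are excluded from the open interval that contains the polynomial roots. For a general Jacobi matrix $J_{a_*}$ an eigenvalue could fall strictly inside the convex hull of the support, possibly at a root of some $Q_n$, and then $\mu$ might charge that root; this is precisely why, as remarked just before the statement, cyclicity of $\varepsilon_n$ is left open for arbitrary $a_*$ and can be settled only in the present special case.
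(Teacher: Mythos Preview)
Your proof is correct and follows essentially the same approach as the paper's: reduce via the unitary equivalence $W J_a W^* = M_\mu$ to the cyclicity criterion of Proposition~\ref{c+m}, use Proposition~\ref{PropFavard} to place the roots of $Q_n$ in the open interval $\mathopen] {\rm min~supp}(\mu), {\rm max~supp}(\mu) \mathclose[$, and invoke Proposition~\ref{PropSpectrumJa} to see that any atoms of $\mu$ sit at the endpoints. Your write-up is in fact more explicit than the paper's about the identification of atoms of $\mu$ with eigenvalues of $J_a$, and your closing remark on why the argument does not extend to general $a_*$ nicely echoes the paper's own comment just before the statement.
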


\begin{proof}
By Proposition~\ref{PropFavard},
for any $n \ge 0$, the zero set of the polynomial $Q_n$
is a finite subset of $\mathopen] {\rm minsupp}(\mu) , {\rm maxsupp}(\mu) \mathclose[$.
By Proposition~\ref{PropSpectrumJa},
$$
\mathopen] {\rm minsupp}(\mu) , {\rm maxsupp}(\mu) \mathclose[
= \left\{ \begin{aligned}
\mathopen] - 2 , 2 \mathclose[ \hskip1.4cm
\hskip.5cm \text{if} \hskip.2cm &a \le \sqrt 2 
\\
\mathopen\big] - \frac{ a^2 }{ \sqrt{a^2 - 1} } , \frac{ a^2 }{ \sqrt{a^2 - 1} } \mathclose\big[
\hskip.5cm \text{if} \hskip.2cm &a > \sqrt 2 
\end{aligned} \right.
$$
does not contain any eigenvalue of $J_a$,
hence the $\mu$ measure of any finite subset of this interval is $0$,
hence $\mu( \{ t \in \Sigma \hskip.1cm : \hskip.1cm Q_n (t) = 0 \} ) = 0$.
By Proposition~\ref{c+m}, it follows that $Q_n$
is a cyclic vector for the multiplication operator $M_\mu$.
\par

Since $W \delta_n = Q_n$ and $W J_a W^* = M_\mu$,
the basis vector $\delta_n$ is also a cyclic vector for the Jacobi operator $J_a$.
\end{proof}

\section{Further examples: the infinite ray, the graph $D_\infty$,
\\
and the stars}
\label{SectionFE}

\begin{prop}[\textbf{in the infinite ray, all vertices are cyclic, and therefore dominant}]
\label{PropExRay}
Let $P$ be the one-way infinite path, or infinite ray,
with vertex set $\N = \{0, 1, 2, 3, \hdots\}$
and edge set $E = \{ \{j, j+1\} : j \in \N \}$.
\par

Then all vertices of $P$ are cyclic.
\end{prop}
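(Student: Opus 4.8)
The plan is to recognize the adjacency operator $A$ of the ray $P$ as the free Jacobi matrix $J_1$ of Section~\ref{S+EofJc}, i.e.\ the symmetric tridiagonal operator on $\ell^2(\N)$ with zero diagonal and all off-diagonal entries equal to $1$, and then to realize $A$ explicitly as a multiplication operator so that Proposition~\ref{c+m} applies. Indeed, with respect to the basis $(\delta_n)_{n\ge 0}$ one has $A\delta_0=\delta_1$ and $A\delta_n=\delta_{n-1}+\delta_{n+1}$ for $n\ge 1$, which is exactly the tridiagonal pattern associated with the constant sequence $a_*=(1,1,1,\dots)$. I deliberately avoid invoking Proposition~\ref{PropCyclicJa} here, since the unitary equivalence behind it (Proposition~\ref{propuniteqJM}) is stated to reduce to the argument of the present proof; so the clean thing is to record that argument directly in the case $a=1$.

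First I would introduce the orthonormal polynomials $(Q_n)_{n\ge 0}$ attached to $a_*=(1,1,1,\dots)$ by the recurrence $tQ_0=Q_1$ and $tQ_n=Q_{n-1}+Q_{n+1}$ for $n\ge 1$; these are the suitably normalized Chebyshev polynomials of the second kind, $Q_n$ having degree $n$. By Favard's theorem (Proposition~\ref{PropFavard}) there is a nontrivial finite positive measure $\mu$ on $\R$ for which the $Q_n$ are orthonormal; in this concrete case $\mu$ is the semicircle measure $d\mu(t)=\frac{1}{2\pi}\sqrt{4-t^2}\,dt$ supported on $[-2,2]$, and since $\mu$ has compact support the $Q_n$ form an orthonormal basis of $L^2(\mu)$ (density of polynomials). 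I would then define the surjective isometry $W\colon\ell^2(\N)\to L^2(\mu)$ by $W\delta_n=Q_n$ and verify, by reading the two three-term recurrences on the common index $n$ (including the first relation $tQ_0=Q_1$), that $WA\delta_n=Q_{n-1}+Q_{n+1}=t\,Q_n=M_\mu W\delta_n$. Hence $WAW^*=M_\mu$, so $A$ is unitarily equivalent to the multiplication operator $M_\mu$ of Proposition~\ref{c+m}.

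With this identification the conclusion is immediate: the image $W\delta_n=Q_n$ is a polynomial of degree $n$, so its zero set is finite, and since $\mu$ is absolutely continuous it charges no finite set; thus $\mu(\{t:Q_n(t)=0\})=0$, and the equivalence of conditions \ref{iDEc+m} and \ref{iiiDEc+m} in Proposition~\ref{c+m} shows that $Q_n$ is cyclic for $M_\mu$. Transporting back through the unitary $W$, the vector $\delta_n$ is cyclic for $A$, for every $n\ge 0$; by Proposition~\ref{dcyclic} it is in particular dominant.

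I expect no serious obstacle, as the whole content is the explicit diagonalization of the free Jacobi matrix. The only points requiring care are the verification $WAW^*=M_\mu$, which is a direct comparison of the two recurrences, and the claim that $\mu$ has no atoms. The latter can be read off the explicit semicircle density, or, if one prefers to stay within the abstract framework, from the absence of eigenvalues for $J_1$ recorded in Proposition~\ref{PropSpectrumJa}~\ref{1DEPropSpectrumJa}, atoms of $\mu$ corresponding to point spectrum of $M_\mu\cong J_1$. Since this same unitary equivalence is the argument referred to in the proof of Proposition~\ref{propuniteqJM} for arbitrary subdiagonal sequences, it is natural to set it out here in the case $a=1$.
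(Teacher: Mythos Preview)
Your proposal is correct and follows essentially the same route as the paper: realize $A$ as the free Jacobi matrix, conjugate it via $W\delta_n=Q_n$ to the multiplication operator $M_\mu$ on $L^2([-2,2],\mu)$ with $\mu$ the semicircle measure, and then invoke Proposition~\ref{c+m} using that each $Q_n$ is a polynomial with finitely many zeros. The paper carries out the Chebyshev computation explicitly rather than citing Favard, but the structure is identical.
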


\noindent \emph{Remark.}
The proof will show more:
any non-zero finite linear combination of the vectors $\delta_j$ (with $j \in \N$)
is a cyclic vector for the adjacency operator of $P$.

Before this, we show the claim of Proposition~\ref{PropExRay}
for the vertices $j=0$, $j=1$, and $j=2$.

\begin{proof}[Proof for the three first vertices]
The adjacency operator $A$ of $P$ acts on $\ell^2(\N)$ by
$$
(A \xi)(n) = \xi(n-1) + \xi(n+1)
\hskip.5cm \text{for all} \hskip.2cm
\xi \in \ell^2(\N)
\hskip.2cm \text{and} \hskip.2cm
n \in \N ,
$$
where $\xi(-1)$ should be read as~$0$.
Equivalently, it acts on the canonical orthonormal basis
$(\delta_n)_{n \in \N}$ of $\ell^2(\N)$ by
\begin{equation}
\label{EqA=Jac1}
A \delta_n = \delta_{n-1} + \delta_{n+1}
\hskip.5cm \text{for all} \hskip.2cm
n \ge 0 ,
\end{equation}
where $\delta_{-1}$ should be read as $0$.
\par

We will show that the vector $\delta_j$ for $j = 0, 1, 2$ is cyclic for $A$,
namely that the closed linear span $\Hi_j$ of $\{A^n \delta_j)_{n=0}^\infty$
is the whole of $\ell^2(\N)$.
\par

Consider first the vector $\delta_0$ in $\ell^2(\N)$.
The space $\Hi_0$ contains $\delta_0$,
hence $A \delta_0 = \delta_1$,
hence $A \delta_1 = \delta_0 + \delta_2$ and therefore also $\delta_2$,
hence $A \delta_2 = \delta_1 + \delta_3$ and therefore also $\delta_3$,
and so on.
Thus $\Hi_0$ contains all $\delta_j$ and therefore coincides with $\ell^2(\N)$.
\par

The space $\Hi_1$ contains $\delta_1$,
hence $A \delta_1 = \delta_0 + \delta_2$,
hence $A (\delta_0 + \delta_2) - 2 \delta_1 = \delta_3$,
hence $A \delta_3 = \delta_2 + \delta_4$, 
and so on.
Thus $\Hi_1$ contains $\delta_{2n+1}$ and $\delta_{2n} + \delta_{2n+2}$
for all $n \ge 0$.
To show that $\Hi_1 = \ell^2(\N)$, it suffices to show that
any $\eta = (\eta_j)_{j=0}^\infty \in \ell^2(\N)$ orthogonal to $\Hi_1$ is zero.
On the one hand
$\langle \eta \sca \delta_{2n+1} \rangle = \eta_{2n+1} = 0$ for all $n \in \N$.
On the other hand
$\langle \eta \sca \delta_{2n} + \delta_{2n+2} \rangle = \eta_{2n} + \eta_{2n+2} = 0$
and therefore $\vert \eta_{2n} \vert = \vert \eta_{2n+2} \vert$
for all $n \in \N$; since $\sum_{n=0}^\infty \vert \eta_{2n} \vert^2 < \infty$,
this implies $\eta_{2n} = 0$ for all $n \in \N$.
Hence $\eta = 0$, as was to be shown.
\par

The space $\Hi_2$ contains $\delta_2$,
hence $A\delta_2 = \delta_1 + \delta_3$,
hence $A(\delta_1 + \delta_3) - 2 \delta_2 = \delta_0 + \delta_4$,
hence $A(\delta_0 + \delta_4) - (\delta_1 + \delta_3) = \delta_5$,
hence $A\delta_5 = \delta_4 + \delta_6$,
hence $A(\delta_4 + \delta_6) - 2 \delta_5 = \delta_3 + \delta_7$,
hence $A(\delta_3 + \delta_7) - (\delta_4 + \delta_6) - \delta_2 = \delta_8$,
and so on.
Thus $\Hi_2$ contains
$\delta_{3n+2}$, $\delta_{3n+1}+\delta_{3n+3}$, and $\delta_{3n}+\delta_{3n+4}$
for all $n \in \N$.
It follows that, for a vector $(\eta_j)_{j=0}^\infty \in \ell^2(\N)$ orthogonal to $\Hi_2$,
we have $\eta_{3n+2} = 0$ for all $n \in \N$,
and $\vert \eta_1 \vert = \vert \eta_3 \vert = \vert \eta_7 \vert = \vert \eta_9 \vert = \cdots$,
and $\vert \eta_0 \vert = \vert \eta_4 \vert = \vert \eta_6 \vert = \vert \eta_{10} \vert = \cdots$,
and therefore $\eta_j = 0$ for all $j \in \N$, so that $\Hi_2 = \ell^2(\N)$.
\par

It should be elementary to continue and show that $\Hi_j = \ell^2(\N)$ for all $j \ge 0$,
but we find
the combinatorics more and more complicated,
and we will provide below a proof of a different style.
\end{proof}

\begin{proof}[Proof of Proposition~\ref{PropExRay}]
We first describe how $A$
is unitarily equivalent to a multiplication operator
closely related to the sequence of Chebyshev polynomials.
\par

Consider the sequence $\left( P_n \right)_{n=0}^\infty$ of functions
defined on the interval $\mathopen[ -2, 2 \mathclose]$ of the real line by
$$
P_n( 2 \cos (\theta) ) = \frac{ \sin ( (n+1)\theta ) }{ \sin ( \theta ) } .
$$
Note that $P_0 (t) = 1$, $P_1(t) = t$, $P_2(t) = t^2-1$,
for all $t \in \mathopen[ -2, 2 \mathclose]$.
Define $P_{-1}$ to be the zero function.
From the trigonometric formula
$$
\sin( (n+1)\theta ) + \sin( (n-1)\theta ) = 2 \cos (\theta) \sin( n\theta ) ,
$$
it follows that
\begin{equation}
\label{EqRecCheb}
t P_{n-1} (t) = P_n (t) + P_{n-2} (t)
\hskip.5cm \text{for all} \hskip.2cm
n \ge 1
\hskip.2cm \text{and} \hskip.2cm
t \in \mathopen[ -2, 2 \mathclose] .
\end{equation}
This implies, by induction on $n$, that $P_n$ is a polynomial,
of the form $P_n(t) = t^n + (\text{lower order terms})$, for all $n \ge 0$.
\par

Define a probability measure $\mu$ on $\mathopen[ -2, 2 \mathclose]$ by
\begin{equation}
\label{EqMesureJacobi1}
d \mu (t) = \frac{1}{2\pi} \sqrt{ 4 - t^2} \hskip.1cm dt
\hskip.5cm \text{for} \hskip.2cm
t \in \mathopen[ -2, 2 \mathclose].
\end{equation}
For $m, n \ge 0$, 
using the change of variables $t = 2 \cos (\theta)$,
we have
$$
\begin{aligned}
&
\int_{-2}^2 P_m (t) P_n (t) d\mu(t)
= \frac{1}{2\pi} \int_{-2}^2 P_m (t) \sqrt{ 4 - t^2}
\hskip.1cm P_n (t) \sqrt{ 4 - t^2} \hskip.1cm \frac{dt}{ \sqrt{ 4 - t^2} }
\\
& \hskip1cm =
\frac{1}{2\pi} \int_0^\pi P_m(2 \cos (\theta)) \hskip.1cm 2 \sin (\theta)
\hskip.1cm P_n(2 \cos (\theta)) \hskip.1cm 2 \sin (\theta) \hskip.1cm d\theta
\\
& \hskip1cm =
\frac{2}{\pi} \int_0^\pi \sin ((m+1)\theta) \hskip.1cm \sin ((n+1)\theta) \hskip.1cm d\theta
\\
& \hskip1cm =
\frac{1}{\pi} \int_0^\pi \Big[ \cos(((m-n)\theta) - \cos((m+n+2)\theta) \Big] \hskip.1cm d\theta
\\
& \hskip1cm =
0 \hskip.2cm \text{if} \hskip.2cm m \ne n
\hskip.2cm \text{and} \hskip.2cm
1 \hskip.2cm \text{if} \hskip.2cm m = n .
\end{aligned}
$$
It follows that $(P_n)_{n \ge 0}$
is an orthonormal basis of $L^2( \mathopen[ -2, 2 \mathclose], \mu )$.
If $M_\mu$ denotes the operator of multiplication by $t$ on this space,
we have by~\eqref{EqRecCheb} above
\begin{equation}
\label{EqJ1=mult}
M_\mu P_n = P_{n-1} + P_{n+1}
\hskip.5cm \text{for all} \hskip.2cm
n \ge 0 .
\end{equation}
\par

The $P_n$~'s are Chebyshev polynomials, up to a normalization.
More precisely, if $U_n(t)$ denotes
the Chebyshev polynomial of the second kind of degree~$n$,
defined by $U_n (\cos \theta) = \sin ((n+1)\theta) / \sin ( \theta )$,
then $P_n (t) = U_n( t/2)$.
\par

Define now an operator $W \, \colon \ell^2(\N) \to L^2( \mathopen[ -2, 2 \mathclose], \mu )$
by $W(\delta_n) = P_n$ for all $n \ge 0$;
it is a surjective isometry.
Then
$$
W A (\delta_n) = W (\delta_{n-1} + \delta_{n+1})
= P_{n-1} + P_{n+1} = M_\mu(P_n) = M_\mu W (\delta_n)
$$
for all $n \ge 0$,
hence $W A W^* = M_\mu$, i.e., $A$ and $M_\mu$ are unitarily equivalent.
\par

For all $n \ge 0$ we have $\mu ( \{t \in \mathopen[ -2, 2 \mathclose] : P_n (t) = 0\} ) = 0$,
because $P_n$ has a finite set of roots.
By Proposition~\ref{c+m},
the vector $P_n \in L^2( \mathopen[ -2, 2 \mathclose], \mu )$
is dominant for~$M_\mu$, and also cyclic for $M_\mu$.
It follows that the vector $\delta_n = W^{-1}(P_n) \in \ell^2(\N)$
is dominant for $A$, and also cyclic for $A$.
\par

Finally about the Remark which follows Proposition~\ref{PropExRay}:
for the same reason, any non-zero polynomial in
$L^2( \mathopen[ -2, 2 \mathclose], \mu )$
is a cyclic vector for $M_\mu$,
and therefore any non-zero finite linear combination of the $\delta_j$~'s
is a cyclic vector for $A$.
\end{proof}

\begin{exe}[\textbf{an infinite graph with exactly two dominant vertices}]
\label{Only2dom}
Consider the graph $D_\infty$, obtained from the infinite ray $P$ of Proposition~\ref{PropExRay}
by adding one vertex $0'$ and one edge $\{0',1\}$.
Then $D_\infty$ has two cyclic vertices, which are $0$ and $0'$,
and all other vertices are not cyclic, and not dominant.
\par

To prove that the vertices $0$ and $0'$ are cyclic,
the argument used in the proof of Proposition~\ref{PropExRay}
for the vertex $0$ of $P$ applies with a minor adaptation.
For the other vertices, consider the subspace $\Ki$ of $\ell^2(D_\infty)$
spanned by the vector $\delta_0 - \delta_{0'}$,
which is an eigenvector of eigenvalue $0$.
The hyperplane $\Ki^\perp$ is invariant by the adjacency operator of $D_\infty$,
and contains $\delta_j$ for all $j \ge 1$.
It follows that $\delta_j$ is not a cyclic vector,
i.e., that $j$ is not a cyclic vertex, for all $j \ge 1$.
By Corollary~\ref{c=d}, $j$ is not a dominant vertex for all $j \ge 1$.
\end{exe}

Let $k$ be an integer, $k \ge 2$, and let $S_k = (V_k, E_k)$ be the star with $k$ infinite rays.
The vertex set $V_k$ consists of vertices $v_0$ and $v_{j,n}$
for $j \in \{1, 2, \hdots, k\}$ and $n \in \N_{\ge 1}$.
Whenever $v_{j, 0}$ for some $j$ appears below, it should be read as $v_0$.
The edge set $E_k$ of $S_k$ contains edges $\{ v_{j,n}, v_{j,n+1} \}$
for $j \in \{1, 2, \hdots, k\}$ and $n \in \N$.
Note that the core vertex $v_0$ is of degree $k$,
and that all other vertices in $V_k$ are of degree $2$.
Note also that $S_2$ is the two-way infinite path $L^1$.

\begin{thm}[\textbf{in the infinite stars, all vertices (but possibly the core)
are dominant and are not cyclic}]
\label{exStars}
Let $k \ge 3$ and let $S_k$ be the star with $k$ infinite rays, as above.
\par

In $S_k$, all vertices other than the core are dominant,
and all vertices are non cyclic.
\end{thm}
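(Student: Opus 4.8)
The plan is to exploit the $k$-fold symmetry of $S_k$ permuting its rays in order to split $\ell^2(V_k)$ into $A$-invariant subspaces on which $A$ reduces to one of the Jacobi operators studied in Section~\ref{S+EofJc}, and then to read off dominance and non-cyclicity of $\delta_{v_{i,m}}$ from its position in this splitting. First I would decompose the ray-index space $\C^k$ orthogonally as $\C\mathbf 1\oplus W$, with $\mathbf 1=(1,\dots,1)$ and $W=\{w:\sum_j w_j=0\}$ of dimension $k-1$. Fix a non-core vertex $v_{i,m}$ with $m\ge 1$, set $w^{(i)}=\epsilon_i-\frac1k\mathbf 1\in W$ (where $\epsilon_i$ is the $i$-th unit vector), and complete $b_1:=w^{(i)}/\Vert w^{(i)}\Vert$ to an orthonormal basis $b_1,\dots,b_{k-1}$ of $W$. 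This yields an orthogonal, $A$-invariant decomposition
$$
\ell^2(V_k)=\Hi_0\oplus\Hi_{b_1}\oplus\cdots\oplus\Hi_{b_{k-1}},
$$
where $\Hi_0$ is spanned by $\delta_{v_0}$ together with the symmetric level vectors $e_n=\frac1{\sqrt k}\sum_{j=1}^k\delta_{v_{j,n}}$ ($n\ge 1$), and each $\Hi_{b_l}$ is spanned by the antisymmetric level vectors $f_n^{(b_l)}=\sum_j (b_l)_j\,\delta_{v_{j,n}}$ ($n\ge 1$). A short computation from the edge relations of $S_k$ gives $Ae_0=\sqrt k\,e_1$, $Ae_1=\sqrt k\,e_0+e_2$ and $Ae_n=e_{n-1}+e_{n+1}$ for $n\ge 2$, so $A|_{\Hi_0}$ is exactly the operator $J_{\sqrt k}$ of Section~\ref{S+EofJc}; likewise $Af_1^{(b_l)}=f_2^{(b_l)}$ and $Af_n^{(b_l)}=f_{n-1}^{(b_l)}+f_{n+1}^{(b_l)}$ for $n\ge 2$, using $\sum_j (b_l)_j=0$, so each $A|_{\Hi_{b_l}}$ is the free Jacobi operator $J_1$, equivalently the ray operator of Proposition~\ref{PropExRay}.

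Next I would place the vertex vector in this splitting. Since $\delta_{v_{i,m}}$ is supported at level $m$ only and $\epsilon_i=\frac1k\mathbf 1+w^{(i)}$, one obtains
$$
\delta_{v_{i,m}}=\tfrac1{\sqrt k}\,e_m\;\oplus\;\Vert w^{(i)}\Vert\,f_m^{(b_1)},
$$
with nonzero components in $\Hi_0$ and in $\Hi_{b_1}$ and zero component in $\Hi_{b_2},\dots,\Hi_{b_{k-1}}$. By Proposition~\ref{PropCyclicJa} every standard basis vector of $J_{\sqrt k}$ is cyclic, so $\tfrac1{\sqrt k}e_m$ is cyclic, hence dominant, for $A|_{\Hi_0}=J_{\sqrt k}$; likewise $\Vert w^{(i)}\Vert f_m^{(b_1)}$ is cyclic, hence dominant, for $A|_{\Hi_{b_1}}=J_1$.

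For dominance I would argue at the level of spectral null sets rather than apply Proposition~\ref{Sumdominant} verbatim, since $\delta_{v_{i,m}}$ vanishes on the summands $\Hi_{b_2},\dots,\Hi_{b_{k-1}}$. As the two nonzero components lie in orthogonal invariant subspaces, $\mu_{\delta_{v_{i,m}}}$ is the sum of the local spectral measure of $\tfrac1{\sqrt k}e_m$ on $\Hi_0$ and that of $\Vert w^{(i)}\Vert f_m^{(b_1)}$ on $\Hi_{b_1}$. The former, being the measure of a cyclic vector, is a scalar-valued spectral measure for $J_{\sqrt k}$, whose spectrum is $\{\pm\tfrac{k}{\sqrt{k-1}}\}\cup[-2,2]$ by Proposition~\ref{PropSpectrumJa} and so carries in particular the two isolated eigenvalues; the latter is a scalar-valued spectral measure for $J_1$ and carries $[-2,2]$. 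Now $E_A(B)=0$ if and only if $E_{J_{\sqrt k}}(B)=0$ on $\Hi_0$ and $E_{J_1}(B)=0$ on each $\Hi_{b_l}$; but $A$ acts as the identical operator $J_1$ on every $\Hi_{b_l}$, so the single condition $E_{J_1}(B)=0$ is already recorded by the $\Hi_{b_1}$-component. Therefore $\mu_{\delta_{v_{i,m}}}(B)=0$ if and only if $E_A(B)=0$, i.e. $\mu_{\delta_{v_{i,m}}}$ is a scalar-valued spectral measure, and $v_{i,m}$ is dominant by Proposition~\ref{dominant}~\ref{4DEdominant}.

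Finally, non-cyclicity is immediate: since $k\ge 3$ we have $k-1\ge 2$, so $\Hi_{b_2}$ is a nonzero $A$-invariant subspace orthogonal to the closed linear span of $\{A^n\delta_{v_{i,m}}:n\in\N\}\subset\Hi_0\oplus\Hi_{b_1}$, whence $\delta_{v_{i,m}}$ is not cyclic, exactly as in Proposition~\ref{Sumdominant}~\ref{iiDESumdominant}. I expect the heart of the matter to be the dominance step: the naive use of Proposition~\ref{Sumdominant} fails because the vertex has zero component in most summands, and the resolution is the observation that the antisymmetric pieces all carry one and the same operator $J_1$, so a single cyclic antisymmetric component already captures their common $[-2,2]$ null structure while the radial component supplies the two eigenvalues $\pm\frac{k}{\sqrt{k-1}}$. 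It is precisely here that the core is set aside: $\delta_{v_0}=e_0$ lies entirely in $\Hi_0$, so $\mu_{\delta_{v_0}}$ sees only $J_{\sqrt k}$, and whether its absolutely continuous part on $[-2,2]$ has the same null sets as that of $J_1$ is the point left undecided for $v_0$.
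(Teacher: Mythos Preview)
Your proof is correct and follows the same overall strategy as the paper: decompose $\ell^2(V_k)$ along the permutation symmetry of the rays, identify the symmetric block with $J_{\sqrt k}$ and each antisymmetric block with $J_1$, invoke Proposition~\ref{PropCyclicJa} for cyclicity in each block, and conclude dominance and non-cyclicity of $\delta_{v_{i,m}}$.

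The one genuine difference is in the choice of basis for the antisymmetric part and the resulting dominance argument. The paper uses the discrete Fourier basis $\varepsilon_{\ell,n}=\frac{1}{\sqrt k}\sum_j\omega^{-j\ell}\delta_{j,n}$ with $\omega=e^{2\pi i/k}$; the inversion formula then gives $\delta_{j,n}=\frac{1}{\sqrt k}\sum_{\ell=1}^k\omega^{j\ell}\varepsilon_{\ell,n}$ with \emph{all $k$ coefficients nonzero}, so Proposition~\ref{Sumdominant}\ref{iDESumdominant} applies verbatim. Your real basis, adapted to the chosen vertex, collapses $\delta_{v_{i,m}}$ onto just two blocks, and you then recover dominance by the nice observation that the $k-1$ antisymmetric blocks carry unitarily equivalent copies of $J_1$, so a scalar-valued spectral measure for one of them detects the null sets of all of them. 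Both routes work; the Fourier basis buys a one-line appeal to Proposition~\ref{Sumdominant}, while your route avoids complex coefficients at the price of that extra null-set argument. Your non-cyclicity argument is the same as the paper's phrased dually: the paper exhibits the proper $A$-invariant subspace $\Ki_1=\{\xi:\xi(v_{2,n})=\cdots=\xi(v_{k,n})\}$ containing $\delta_{1,n}$, which in your notation is exactly $\Hi_0\oplus\Hi_{b_1}$.
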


For $k = 2$, the ``star'' $S_2$ is the infinite line $L^1$,
in which all vertices are dominant,
see Example~\ref{exLatticeTree}.
When $k \ge 3$,
we have not been able to determinate whether the core $v_0$ in $S_k$ is dominant or not.
\par

The proof below uses general facts
on orthogonal polynomials on the real line and related self-adjoint operators
(compare with the proof of Proposition~\ref{PropExRay})
which have been recalled in Section~\ref{S+EofJc}.

\begin{proof}[First part of the proof of Theorem~\ref{exStars}:
all vertices $\ne v_0$ are dominant]
Denote by $\Hi$ the Hilbert space $\ell^2(V_k)$.
The space $\Hi$ has an orthonormal basis consisting of
$$
\delta_0 := \delta_{v_o}
\hskip.5cm \text{and} \hskip.5cm
\delta_{j, n} := \delta_{v_{j, n}}
\hskip.2cm \text{for} \hskip.2cm
j \in \{1, 2, \hdots, k\}
\hskip.2cm \text{and} \hskip.2cm
 n \in \N_{\ge 1}.
$$
The adjacency operator $A$ of $S_k$ acts on $\Hi$ by
$$
A \delta_0 = \sum_{j=1}^k \delta_{j, 1}
\hskip.2cm \text{and} \hskip.2cm
A \delta_{j, n} = \delta_{j, n-1} + \delta_{j, n+1}
\hskip.2cm \text{for} \hskip.2cm
j \in \{1, 2, \hdots, k\}
\hskip.2cm \text{\&} \hskip.2cm
n \in \N_{\ge 1}
\leqno{(*)}
$$
(where $\delta_{j, 0}$ stands for $\delta_0$).
\par

There is a natural action of the cyclic group of order $k$ on $V_k$,
by cyclic permutations of the rays of $S_k$,
and a corresponding orthogonal decomposition of $\Hi$ in isotypical components,
which can be defined as follows.
Set $\omega = \exp( 2 \pi i / k)$. For $\ell \in \{1, 2, \hdots, k-1\}$, define
$$
\Hi_\ell =
\Bigg\{ \xi \in \Hi
\hskip.1cm \Bigg\vert \hskip.1cm
\begin{aligned}
& \xi (v_0) = 0
\hskip.2cm \text{and, for all} \hskip.2cm
n \in \N_{\ge 1},
\\
& \omega^\ell \xi(v_{1,n}) = \cdots = \omega^{j\ell} \xi(v_{j,n}) = \cdots
= \omega^{(k-1)\ell} \xi(v_{k-1,n}) = \xi(v_{k,n})
\end{aligned}
\Bigg\} ;
$$
this subspace of $\Hi$ has an orthonormal basis
$\varepsilon_{\ell,\ast} = \left( \varepsilon_{\ell,n} \right)_{n \ge 1}$
defined by
$$
\varepsilon_{\ell,n} = \frac{1}{\sqrt k} \sum_{j=1}^k \omega^{-j\ell} \delta_{j,n}
\hskip.2cm \text{for all} \hskip.2cm
n \ge 1 .
$$
Define also
$$
\Hi_k =
\Bigg\{ \xi \in \Hi
\hskip.1cm \Bigg\vert \hskip.1cm
\begin{aligned}
& \xi(v_{1,n}) = \cdots = \xi(v_{j,n}) = \cdots = \xi(v_{k-1,n}) = \xi(v_{k,n})
\\
& \text{for all} \hskip.2cm
n \in \N_{\ge 1}
\\
\end{aligned}
\Bigg\} ;
$$
this subspace has an orthonormal basis
$\varepsilon_{k,\ast} = \left( \varepsilon_{k,n} \right)_{n \ge 0}$
defined by
$$
\varepsilon_{k,0} = \delta_0
\hskip.5cm \text{and} \hskip.5cm
\varepsilon_{k,n} = \frac{1}{\sqrt k} \sum_{j=1}^k \delta_{j,n}
\hskip.2cm \text{for all} \hskip.2cm
n \ge 1 .
$$
Observe that
$$
\delta_{j,n} = \frac{1}{\sqrt k} \sum_{\ell=1}^k \omega^{j \ell} \varepsilon_{\ell,n}
\hskip.2cm \text{for all} \hskip.2cm
j \in \{1, \hdots, k\}
\hskip.2cm \text{and} \hskip.2cm
n \ge 1 ;
\leqno{(**)}
$$
indeed
$$
\frac{1}{\sqrt k} \sum_{\ell=1}^k \omega^{j \ell} \varepsilon_{\ell,n}
= \frac{1}{k} \sum_{\ell=1}^k \omega^{j\ell} \sum_{m=1}^k \omega^{-m\ell} \delta_{m,n}
= \frac{1}{k} \sum_{m=1}^k \Big( \sum_{\ell=1}^k \omega^{(j-m)\ell} \Big) \delta_{m,n}
= \delta_{j,n} .
$$
From ($**$), retain that, for all $j \in \{1, \hdots, k\}$ and $n \ge 1$,
the vector $\delta_{j,n}$ is a linear combination
of $\varepsilon_{1,n}, \hdots, \varepsilon_{k,n}$ in which all coefficients are non-zero.
We leave it to the reader to check that the subspaces defined above
are orthogonal with each other,
that they provide an orthogonal decomposition
$$
\Hi = \Hi_1 \oplus \cdots \oplus \Hi_k ,
$$
and that each of them is invariant by $A$;
we denote by $A_j$ the restriction of $A$ to $\Hi_j$,
and we have
$$
A = A_1 \oplus \cdots \oplus A_k .
$$
\par

For $\ell \in \{1, \hdots, k-1\}$, the matrix of the operator $A_\ell$
with respect to the basis $\left( \varepsilon_{\ell,n} \right)_{n \ge 1}$ of $\Hi_\ell$ is
$$
J_1 = \begin{pmatrix}
0 & 1 & 0 & 0 & \cdots
\\
1 & 0 & 1 & 0 & \cdots
\\
0 & 1 & 0 & 1 & \cdots
\\
0 & 0 & 1 & 0 & \cdots
\\
\cdots & \cdots & \cdots & \cdots & \ddots
\end{pmatrix} 
$$
(independent on $\ell$).
It follows from Proposition~\ref{PropExRay} that, for all $n \ge 1$, the vector $\varepsilon_{\ell,n}$
is cyclic for~$A_\ell$ (as well as all its nonzero scalar multiples).
For $\ell = k$, the matrix of the operator $A_k$
with respect to the basis $\left( \varepsilon_{k,n} \right)_{n \ge 0}$ of $\Hi_k$ is
$$
J_{\sqrt k} = \begin{pmatrix}
0 & \sqrt k & 0 & 0 & \cdots
\\
\sqrt k & 0 & 1 & 0 & \cdots
\\
0 & 1 & 0 & 1 & \cdots
\\
0 & 0 & 1 & 0 & \cdots
\\
\cdots & \cdots & \cdots & \cdots & \ddots
\end{pmatrix} .
$$
By Proposition~\ref{PropCyclicJa},
the vector $\varepsilon_{k,0} = \delta_0$ and the vectors $\varepsilon_{k,n}$ for all $n \ge 1$
are cyclic for~$A_k$.
\par

It follows now from Proposition~\ref{Sumdominant} and from ($**$)
that $\delta_{j,n}$ is dominant for $A$ for all $j \in \{1, \hdots, k\}$ and $n \ge 1$.
\end{proof}

\begin{proof}[Second part of the proof of Theorem~\ref{exStars}: no vertex is cyclic]
The vector $\delta_0$ is contained in the subspace $\Hi_k$,
which is a $A$-invariant proper subspace of $\Hi$,
hence the vector $\delta_0$ is not cyclic for $A$, i.e., the vertex $v_0$ is not cyclic in $S_k$.
In the case of the star $S_2$ with two branches, this argument
applies to all vertices of $S_2 = L^1$.
\par

Assume from now on that $k \ge 3$.
Set
$$
\Ki_1 = \left\{ \xi \in \Hi
\hskip.1cm \big\vert \hskip.1cm
\xi(v_{2, n}) = \xi(v_{3,n}) = \cdots = \xi(v_{k, n})
\hskip.2cm \text{for all} \hskip.2cm
n \in \N_{\ge 1} \right\} .
$$
It is a closed subspace of $\Hi$ which is proper (because $k \ge 3$),
contains $\delta_{1,n}$ for all $n \in \N$,
and which is invariant by $A$ (see ($*$)).
It follows that $\delta_{1,n}$ ($n \ge 0$) is not a cyclic vector for $A$.
With minor modifications,
this argument shows that $\delta_{j,n}$ is not a cyclic vector for $A$,
for all $j \in \{1, \hdots, k\}$ and all $n \ge 0$.
\par

This ends the proof of Theorem~\ref{exStars}.
\end{proof}

\section{Markov operators and graphs without $M$-dominant vertices}
\label{sectionMarkov}

Let $G = (V, E)$ be a graph in which all vertices have finitely many neighbours.
For $u \in V$, denote by $\deg (u)$ the number of its neighbour,
and set $\deg_\sharp (u) = \max \{ \deg (u), 1 \}$.
Rather than $\ell^2(V)$ and $A$, consider the Hilbert space
$$
\ell^2_\sharp(V) = \left\{ \eta \, \colon V \to \C \mid
\sum_{u \in V} \vert \eta (u) \vert^2 \deg_\sharp (u) < \infty \right\} ,
$$
with scalar product denoted by
$\langle \eta_1 \mid \eta_2 \rangle_\sharp
= \sum_{u \in V} \eta_1(u) \overline{\eta_2(u)} \deg_\sharp (u)$.
For $v \in V$, we denote by $\delta_v \in \ell^2_\sharp(V)$
the characteristic function of the vertex $v$,
and by $\varepsilon_v \in \ell^2_\sharp(V)$ the vector $\frac{ 1 }{ \sqrt{ \deg_\sharp(V) } } \delta_v$;
the family $(\varepsilon_v)_{v \in V}$ is an orthonormal basis of $\ell^2_\sharp(V)$.
The \textbf{Markov operator} $M$ of $G$ defined by
$$
(M \eta)(u) = \frac{1}{\deg (u)} 
\sum_{v \sim u} \eta (v)\hskip.5cm \text{for all} \hskip.2cm
\eta \in \ell^2_\sharp(V)
\hskip.2cm \text{and} \hskip.2cm
u \in V ,
$$
where $\sum_{v \sim u}$ is a summation over all vertices $v$ which are neighbours of $u$.
(If $\deg (u) = 0$, the summation is on an empty set of $v$~'s and $(M\eta)(u) = 0$.)
It is elementary to check that
$\Vert M \Vert \le 1$ and that $M$ is self-adjoint on $\ell^2_\sharp(V)$
(see \cite[\S~3.2]{Kowa--19}).
We have
$$
M \delta_w = \sum_{u \sim w} \frac{1}{ \deg (u) } \delta_u
\hskip.5cm \text{and} \hskip.5cm
M \varepsilon_w = \sum_{u \sim w} \frac{ 1 }{ \sqrt{ \deg (u) \deg (w) } } \varepsilon_u ,
$$
so that the matrix $\left( M_{v,w} \right)_{v,w \in V}$ of $M$
with respect to the basis $(\varepsilon_v)_{v \in V}$ is given by
$$
M_{v,w} =
\langle M \varepsilon_w \mid \varepsilon_v \rangle_\sharp = \left\{
\begin{aligned}
\frac{ 1 }{ \sqrt{ \deg (v) \deg (w) } }
\hskip.5cm &\text{if} \hskip.2cm
v,w
\hskip.2cm \text{are neighbours in} \hskip.2cm
V 
\\
0 \hskip1.3cm
\hskip.5cm &\text{otherwise}
\end{aligned}
\right.
$$
(we think of $v$ as a row index and of $w$ as a column index for the matrix of $M$).
Observe that $\frac{ \sqrt{ \deg (w) } }{ \sqrt{ \deg (v) } } M_{v,w}$,
which is $\frac{ 1 }{ \deg (v) }$ if $w$ is a neighbour of $v$ and $0$ if not,
is the probability that a random walk starting at $v$ is at $w$ after one step;
more generally
$$
\frac{ \sqrt{ \deg (w) } }{ \sqrt{ \deg (v) } } (M^n)_{v,w} =
\frac{ \sqrt{ \deg (w) } }{ \sqrt{ \deg (v) } } \langle M^n \varepsilon_w \mid \varepsilon_v \rangle_\sharp =
p^{(n)}_{v,w}
$$
is the probability that a random walk starting at $v$ is at $w$ after $n$ steps.
A vector $\eta \in \ell^2_\sharp(V)$ defines
a local spectral measure for $M$, say $\nu_\eta$,
and a vertex $v \in V$ defines a vertex spectral measure for $M$, say $\nu_v = \nu_{\varepsilon_v}$.
A vertex $v$ is \textbf{$M$-dominant} if $\nu_\eta \prec \hskip-.1cm \prec\nu_v$
for all $\eta \in \ell^2_\sharp(V)$.
\par

When $G$ is a regular graph of some degree $k$,
the space $\ell^2_\sharp(V)$ and $\ell^2(V)$ are the same as sets,
scalar products are contant multiples of one another,
$\langle \eta_1 \sca \eta_2 \rangle_\sharp = \frac{1}{k} \langle \eta_1 \sca \eta_2 \rangle$,
and $M = \frac{1}{k} A$.
In this case, a vertex of $G$ is $M$-dominant if and only if it is $A$-dominant.
\par

However, this does not hold in general.
Consider for example an integer $n \ge 2$ and
the finite path $P_n$ with vertices $\{v_0, \hdots, v_{n-1} \}$.
Then $M$ has a simple eigenvalue $\cos \frac{ k \pi }{ n-1 }$
with eigenvector
$\eta_k = \big( \cos \frac{ kj \pi }{ n-1 } \big)_{j = 0, \hdots, n-1}$
for each $k \in \{0, \hdots, n-1\}$.
It follows that, 
when $n$ is even, all vertices of $P_n$ are $M$-dominant;
recall from Example~\ref{finitepaths} that,
when $n+1$ is not prime, some vertices are not $A$-dominant.
In contrast, when $n=9$, the only $M$-dominant vertices of $P_9$
are $v_0$ and $v_8$, but the $A$-dominant vertices are $v_0, v_2, v_6, v_8$.
\par

For the connected finite graphs we know without $A$-dominant vertices,
a close look shows that these graphs are also without $M$-dominant vertices.
It is open for us whether this holds for all graphs without $A$-dominant vertices.
Note that Propositions 3.4 and 3.7
hold for $M$ without changes.

\end{document}